\definecolor{darkgreen}{rgb}{0.1,0.6,0.1}
\newtheorem{theorem}{Theorem}[section]
\newtheorem{lemma}[theorem]{Lemma}
\newtheorem{claim}[theorem]{Claim}
\newtheorem{corollary}[theorem]{Corollary}
\newtheorem{remark}{Remark}[section]
\newtheorem{algorithm}{Algorithm}
\newcommand{\pr}{\mathbb{P}}
\newcommand{\jnote}[1]{}
\newcommand{\E}{{\mathbb E}}
\newcommand{\diam}{\mathrm{diam}}
\newcommand{\supp}{\mathrm{supp}}
\newcommand{\remove}[1]{}
\newcommand{\1}{\mathbf{1}}
\newcommand{\e}{\varepsilon}
\newcommand{\eps}{\varepsilon}
\newcommand{\poly}{\textup{poly}}
\newcommand{\R}{\mathbb{R}}
\newcommand{\norm}[1]{\left\|#1\right\|}
\DeclareMathOperator{\argmin}{argmin}
\newcommand*{\defeq}{\mathrel{\vcenter{\baselineskip0.5ex \lineskiplimit0pt
                     \hbox{\scriptsize.}\hbox{\scriptsize.}}}%
                     =}
\newcommand*{\defeqb}{=\mathrel{\vcenter{\baselineskip0.5ex \lineskiplimit0pt
                     \hbox{\scriptsize.}\hbox{\scriptsize.}}}%
                     }
\begin{document}

\title{{\bf Multi-way spectral partitioning and higher-order \\ Cheeger inequalities}}
\author{James R. Lee\thanks{Department of Computer Science \& Engineering, University of Washington.  Partially supported by NSF grants CCF-0644037, CCF-0915251, and a Sloan Research
Fellowship. Email:\protect\url{jrl@cs.washington.edu}. } \and Shayan Oveis Gharan\thanks{Department of Management Science and Engineering, Stanford University.  Supported by a
Stanford Graduate Research Fellowship. Part of this work was done while the author was a summer intern at Microsoft Research New England. Email:\protect\url{shayan@stanford.edu}.} \and
Luca Trevisan\thanks{Department of Computer Science, Stanford University.  This material is based
on work supported by the National Science Foundation under grant CCF-1017403. Email:\protect\url{trevisan@stanford.edu}.}}
\date{}

\maketitle

\begin{abstract}
A basic fact in spectral graph theory is that the number of connected components in an undirected graph is equal to the multiplicity of the eigenvalue zero in the Laplacian matrix of
the graph. In particular, the graph is disconnected if and only if there are at least two eigenvalues equal to zero.
Cheeger's inequality and its variants
provide an approximate version of the latter fact; they state that a graph has a sparse cut if and only if there are at least two eigenvalues that are close to zero.

It has been conjectured that an analogous characterization holds for higher multiplicities:  There are $k$ eigenvalues close to zero if and only if the vertex set can be
partitioned into $k$ subsets, each defining a sparse cut. We resolve this conjecture positively. Our result provides a theoretical justification for clustering algorithms that use the
bottom $k$ eigenvectors to embed the vertices into $\mathbb R^k$, and then apply geometric considerations to the embedding.

We also show that these techniques yield a nearly optimal quantitative connection between the expansion of sets of size $\approx n/k$ and $\lambda_k$, the $k$th smallest eigenvalue of
the normalized
Laplacian, where $n$ is the number of vertices.
In particular, we show that in every graph there are at least $k/2$ disjoint sets (one of which will have size at most $2n/k$), each having  expansion at most $O(\sqrt{\lambda_k \log
k})$.   Louis, Raghavendra, Tetali, and Vempala have independently proved
a slightly weaker version of this last result.
The $\sqrt{\log k}$ bound is tight,
up to constant factors, for the ``noisy hypercube'' graphs.
\end{abstract}

\newpage

\setcounter{tocdepth}{2} \tableofcontents

\newpage

\section{Introduction}

Let $G=(V,E)$ be an undirected, $d$-regular graph. Its normalized Laplacian matrix $L \in \mathbb R^{V \times V}$ is given
by $L = I - \frac{1}{d} A$, where $A$ is the adjacency matrix of $G$.  For the moment, we
confine ourselves to unweighted, regular graphs, while the results in the paper are presented
for arbitrary weighted graphs, with suitable changes to $L$.
It is easy to see that $L$ is a positive semi-definite matrix, and its eigenvalues satisfy
$0 = \lambda_1 \leq \lambda_2 \leq \cdots \leq \lambda_{|V|}$.
Elementary arguments show that
the number of connected components of $G$ is precisely the
multiplicity of the eigenvalue zero, that is, $\lambda_k = 0$ if and only if the graph has at least $k$ connected components.

Cheeger's inequality for graphs \cite{AM85,Alon86,SinclairJerrum} yields
a robust version of this fact for $k=2$.
To state it, we introduce some notation.
For any subset $S \subseteq V$, define the {\em expansion} of $S$
to be the quantity
$$
\phi_G(S) = \frac{|E(S,\overline{S})|}{d \,|S|}\,,
$$
where $E(S,\overline{S})$ denotes the set of edges of $G$ crossing from $S$ to its complement.
We may also define, for every $k \in \mathbb N$, the
{\em $k$-way expansion constant,}
$$
\rho_G(k) = \min_{S_1, S_2, \ldots, S_k} \max \{ \phi_G(S_i) : i=1,2,\ldots,k\},
$$
where the minimum is over all collections of $k$ non-empty, disjoint subsets $S_1, S_2, \ldots, S_k \subseteq V$.
It is an easily verifiable fact that $\rho_G(k)=0$ if and only if $\lambda_k=0$.
Cheeger's inequality offers the following quantitative
connection betwen $\rho_G(2)$ and $\lambda_2$,
\begin{equation}\label{eq:introcheeger}
\frac{\lambda_2}{2} \leq \rho_G(2) \leq \sqrt{2 \lambda_2}\,.
\end{equation}
We remark that the left-hand side follows easily, and the non-trivial
content of the connection is contained in the right-hand side inequality.

The discrete version of
Cheeger's inequality is proved via a simple spectral partitioning algorithm.
Besides being an important theoretical tool,
since their inception spectral methods have been
  used for solving a wide range of optimization problems, from graph
  coloring \cite{AspvallGilbert,AlonKahale} to image segmentation
  \cite{ShiMalik,TolliverMiller} to web search \cite{Kleinberg,BrinPage}.

\medskip
\noindent
{\bf Higher-order Cheeger inequalities.}
In general, we study higher-order analogs of \eqref{eq:introcheeger},
and develop new multi-way spectral partitioning algorithms.
A special case of one of our main theorems (see Section \ref{sec:higherorder} and Theorem \ref{thm:fewpiecesbetter})
follows.  It offers a strong quantitative version of the fact that $\rho_G(k)=0 \iff \lambda_k=0$.

\begin{theorem}\label{thm:kway}
For every graph $G$, and every $k \in \mathbb N$, we have
\begin{equation}\label{eq:kway}
\frac{\lambda_k}{2} \leq \rho_G(k) \leq O(k^2) \sqrt{\lambda_k}\,.
\end{equation}
\end{theorem}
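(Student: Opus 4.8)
\emph{The lower bound $\lambda_k/2\le\rho_G(k)$} is the routine direction, which I would dispatch first with Courant--Fischer. Let $S_1,\dots,S_k$ be disjoint nonempty sets attaining $\rho_G(k)$; their indicator functions span a $k$-dimensional subspace $W$ (the supports are disjoint, so the dimension is exactly $k$). For $x=\sum_i c_i\mathbf 1_{S_i}\in W$, write the Laplacian quadratic form as a sum over edges: an edge inside some $S_i$ contributes $0$, and using $(a-b)^2\le 2a^2+2b^2$, each edge leaving $S_i$ contributes at most $2c_i^2$ to the $S_i$-bookkeeping. Summing, $\langle x,Lx\rangle\le 2\sum_i c_i^2\,\frac{|E(S_i,\overline{S_i})|}{d}=2\sum_i c_i^2|S_i|\,\phi_G(S_i)\le 2\rho_G(k)\,\langle x,x\rangle$. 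So the Rayleigh quotient is $\le 2\rho_G(k)$ on a $k$-dimensional space, whence $\lambda_k\le 2\rho_G(k)$; the computation is identical in the weighted setting with the degree inner product.

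\emph{The upper bound} is the real content, and the plan is a multi-way spectral partitioning argument. Let $f_1,\dots,f_k$ be eigenfunctions of $L$ for $\lambda_1,\dots,\lambda_k$, orthonormal in the degree inner product, and form the spectral embedding $F\colon V\to\mathbb R^k$, $F(v)=(f_1(v),\dots,f_k(v))$. I would lean on three facts: (i) \emph{isotropy}, $\sum_v d_v\,F(v)F(v)^{\mathsf T}=I$; in particular $\sum_v d_v\|F(v)\|^2=k$, and no direction carries more than a bounded share of the total mass; (ii) the \emph{energy bound} $\sum_{\{u,v\}\in E}\|F(u)-F(v)\|^2=\sum_{i\le k}\lambda_i\le k\lambda_k$; and (iii) every coordinate $v\mapsto\langle F(v),x\rangle$ lies in $\mathrm{span}(f_1,\dots,f_k)$, so it has Rayleigh quotient at most $\lambda_k$.

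The crux is a \emph{localization step}: from the embedded point set $\{F(v)\}$, extract $k$ pairwise disjoint vertex subsets $A_1,\dots,A_k$ together with nonnegative functions $g_1,\dots,g_k$, where $g_i$ is supported inside $A_i$ and has Rayleigh quotient $\mathcal R(g_i)=\mathrm{poly}(k)\cdot\lambda_k$. My plan for this is to pick $k$ ``region centers'' greedily --- say, after radially projecting the $F(v)$ onto the unit sphere --- at a separation scale $\tau$ chosen so that the $k$ induced regions are pairwise $\Omega(\tau)$-separated and yet each still captures an $\Omega(1/k)$-fraction of the total mass $\sum_v d_v\|F(v)\|^2=k$; this is exactly where isotropy is used, to rule out the mass hiding in fewer than $k$ clusters. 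Then I take $g_i=\|F(\cdot)\|$ times a $1/\tau$-Lipschitz cutoff supported near the $i$th center. The numerator of $\mathcal R(g_i)$ is then $O(\tau^{-2})\sum_{\{u,v\}\in E}\|F(u)-F(v)\|^2=O(\tau^{-2}k\lambda_k)$ by (ii), the denominator is $\Omega(1)$ times the mass of region $i$, and optimizing $\tau$ (polynomially small in $k$) gives $\mathcal R(g_i)=\mathrm{poly}(k)\,\lambda_k$. Finally, I apply the easy (rounding) direction of Cheeger's inequality to each $g_i\ge0$ separately: a threshold set $T_i\subseteq\supp g_i\subseteq A_i$ has $\phi_G(T_i)\le\sqrt{2\,\mathcal R(g_i)}=O(k^2)\sqrt{\lambda_k}$, and since the $A_i$ are disjoint and nonempty so are the $T_i$. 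This is precisely $\rho_G(k)\le O(k^2)\sqrt{\lambda_k}$. (I would not expect $k^2$ to be the best exponent obtainable this way; it is a safe target for the basic version of the theorem.)

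\textbf{Main obstacle.} Everything hinges on the localization step, and the difficulty there is to obtain \emph{simultaneously}, for all $k$ regions at once, both good mutual separation (so the Lipschitz cutoffs do not inflate the energy) and a uniform lower bound on the mass of each region (so the normalizations survive). Choosing the geometric partition of the embedded points and the separation scale $\tau$ so that these two demands coexist --- and thereby pinning down the correct polynomial in $k$ --- is where essentially all the work lies; the spectral inputs (i)--(iii) and the per-coordinate Cheeger rounding are standard.
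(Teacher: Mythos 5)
Your lower-bound argument is correct and is the same elementary Courant--Fischer computation the paper alludes to (and omits). The upper-bound \emph{framework} you describe---spectral embedding $F:V\to\mathbb R^k$, isotropy $\sum_v w(v)F(v)F(v)^{\mathsf T}=I$, radial projection, localization via Lipschitz cutoffs, and then rounding---is also the paper's framework, so you have the right high-level picture.

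However, there is a genuine gap, and it concerns the exponent. Your plan is: (a) extract $k$ well-separated regions each with $\Omega(1/k)$ mass; (b) localize $F$ to each region via a Lipschitz cutoff to produce disjointly supported $g_1,\dots,g_k$ with small Rayleigh quotients; (c) apply Cheeger's inequality \emph{separately} to each $g_i$. This is exactly the route the paper takes in Section 3 (Lemmas~\ref{lem:bump}, \ref{lem:manybumps}, \ref{lem:ksets}, Theorem~\ref{thm:ksets}), and one should track the parameters: the spreading property forces $\Delta\lesssim k^{-1/2}$ in order to extract $k$ (rather than, say, $k/2$) regions; the padded random partition of $\mathbb R^k$ with failure probability $\delta'\asymp 1/k$ has padding ratio $\alpha\asymp k/\delta'\asymp k^2$; the separation is therefore $\tau\asymp\Delta/\alpha$; and the resulting Rayleigh quotients are $\mathcal R(g_i)\lesssim (\alpha/\Delta)^2\cdot k\cdot\lambda_k\asymp k^{6}\lambda_k$ (this is Theorem~\ref{thm:miclo}). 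Cheeger then gives $\phi_G(S_i)\lesssim k^{3}\sqrt{\lambda_k}$, not $k^2\sqrt{\lambda_k}$. In other words, the ``localize, then sweep'' pipeline provably loses an extra factor of $k$ over the claimed bound. Your informal ``optimize $\tau$'' step cannot rescue this: the separation $\tau$ is pinned down by the need to simultaneously capture $\Omega(1/k)$ mass in each region, and you cannot make it as large as the $k^{-3/2}$ your arithmetic would require.

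What the paper actually does to get $O(k^2)\sqrt{\lambda_k}$ (Theorem~\ref{thm:fewpiecesbetter} with $\delta=1/(2k)$) is different in a key way: it \emph{abandons} the goal of producing disjointly supported low-Rayleigh-quotient functions, and instead runs the multi-way Cheeger rounding of Lemma~\ref{lem:mwcheeger}. That lemma couples a $(\Delta,O(\sqrt{k}))$-\emph{Lipschitz} random partition of $(V,d_F)$ (Theorem~\ref{thm:RkLip}) with a single random threshold on $\|F(\cdot)\|^2$, and bounds the expected expansion of the resulting cells \emph{directly}, in one computation, rather than first bounding a Rayleigh quotient and then invoking Cheeger. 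The payoff is that the Lipschitz constant $O(\sqrt{k})$ enters linearly in the expansion bound (giving $\sqrt{k}/(\delta\Delta)\cdot\sqrt{\lambda_k}$), whereas the padded-partition constant $O(k/\delta)$ enters squared in the Rayleigh quotient and only gets square-rooted after Cheeger. That quadratic-vs-linear distinction is exactly the $k^3\to k^2$ improvement, and it is the missing ingredient in your proposal. Separately, your greedy center-picking step is under-specified: greedy packing does not by itself guarantee that each selected region captures $\Omega(1/k)$ of the $\ell^2$ mass, which is why the paper uses random space partitions (whose boundary-mass control, together with the spreading property, lets one merge pieces into $\Omega(k)$ groups of the right mass).
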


This resolves a conjecture of
Miclo \cite{Miclo2008}; see also \cite{DJL12}, where some special
cases are considered.
Moreover, Miclo \cite{Miclo13} has used Theorem \ref{thm:kway} as the key step in establishing a 40-year-old conjecture
of Simon and H$\o$egh-Krohn \cite{SH72}.  We discuss
this connection briefly at the end of the present section.

We remark that from Theorem \ref{thm:kway}, it is easy
to find a {\em partition} of the vertex set into $k$ non-empty pieces such that
every piece in the partition has expansion $O(k^3) \sqrt{\lambda_k}$
(see Theorem \ref{thm:kpartition}).
It is known that a dependence on $k$ in the right-hand side of \eqref{eq:kway}
is necessary; see Section \ref{sec:noisycube}.

\medskip

Moreover, our proof is algorithmic
and leads to new algorithms for $k$-way spectral partitioning.
This provides a theoretical justification for clustering algorithms that use the
bottom $k$ eigenvectors of the Laplacian\footnote{Equivalently, algorithms
that use the top $k$ eigenvectors of the adjacency matrix.} to embed the vertices into $\mathbb R^k$, and then apply geometric considerations to the embedding.
See \cite{VM03} for a survey of such approaches.
As a particular example, consider the work of Jordan, Ng and Weiss \cite{NJW02} which applies
a $k$-means clustering algorithm to the embedding in order to achieve a $k$-way partitioning.
Our proof of Theorem \ref{thm:kway} employs a similar algorithm, where the $k$-means step is replaced by
a random geometric partitioning.  It remains an interesting
open problem whether $k$-means itself can be analyzed in this setting.
See the discussion at the end of Section \ref{sec:gaps} for some
results in this direction.

\medskip
\noindent
{\bf Finding many sets and small-set expansion.}
If one is interested in finding slightly fewer sets, our approach performs significantly better.

\begin{theorem}\label{thm:fewer}
For every graph $G$, and every $k \in \mathbb N$, we have
\begin{equation}\label{eq:neartight}
\rho_G(k) \leq O(\sqrt{\lambda_{2k} \log k})\,.
\end{equation}
If $G$ is planar then, the bound improves to,
\begin{equation}\label{eq:planarintro}
\rho_G(k) \leq O(\sqrt{\lambda_{2k}})\,.
\end{equation}
More generally, if $G$ excludes $K_h$ as a minor, then
$$
\rho_G(k) \leq O(h^2 \sqrt{\lambda_{2k}})\,.
$$
\end{theorem}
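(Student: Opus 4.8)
\medskip
\noindent\textbf{Proof strategy.}
My plan is to reduce the theorem to producing $k$ \emph{disjointly supported} real functions on $V$ of small Rayleigh quotient, and then to invoke the ordinary (single‑vector) Cheeger inequality on each of them separately. Write $\mathcal R(g)=\big(\sum_{uv\in E}(g(u)-g(v))^2\big)\big/\big(d\sum_v g(v)^2\big)$. If $\psi_1,\dots,\psi_k$ have pairwise disjoint supports and $\mathcal R(\psi_i)\le R$ for all $i$, then sweeping a threshold through each $\psi_i$ yields disjoint nonempty sets $S_i\subseteq\supp(\psi_i)$ with $\phi_G(S_i)\le\sqrt{2R}$, so $\rho_G(k)\le\sqrt{2R}$. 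It therefore suffices to build such $\psi_i$ with $R=O(\lambda_{2k}\log k)$ in general, $R=O(\lambda_{2k})$ in the planar case, and $R=O(h^4\lambda_{2k})$ when $K_h$ is a forbidden minor. To build them I would take $L$‑orthonormal eigenfunctions $f_1,\dots,f_{2k}$ for $\lambda_1\le\cdots\le\lambda_{2k}$ and form the spectral embedding $F\colon V\to\mathbb R^{2k}$, $F(v)=(f_1(v),\dots,f_{2k}(v))$. This embedding is \emph{isotropic} — $\sum_v d_v\langle F(v),x\rangle^2=\|x\|^2$ for every $x$, hence $\sum_v d_v\|F(v)\|^2=2k$ — and has \emph{small energy} — $\sum_{uv\in E}\|F(u)-F(v)\|^2=\sum_{i\le 2k}\lambda_i\le 2k\,\lambda_{2k}$. (For weighted graphs one uses the weighted Laplacian and degrees throughout with no essential change.)

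Next I would radially project, which converts ``isotropy'' and ``energy'' into a form digestible by geometric partitioning. Discard the harmless vertices with $F(v)=0$, set $\bar F(v)=F(v)/\|F(v)\|\in S^{2k-1}$, give $V$ the metric $d(u,v)=\|\bar F(u)-\bar F(v)\|$, and assign each vertex the mass $\mu_v=d_v\|F(v)\|^2$, so the total mass is $2k$. A standard smoothing estimate bounds the mass‑weighted projected energy, $\sum_{uv\in E}\min(\mu_u,\mu_v)\,d(u,v)^2\lesssim\sum_{uv\in E}\|F(u)-F(v)\|^2\le 2k\,\lambda_{2k}$, while isotropy gives the complementary \emph{spreading} bound $\sum_{v:\,d(v,p)\le\delta}\mu_v=O(1)$ for every $p\in S^{2k-1}$ and fixed small $\delta$ — no small ball carries more than an $O(1/k)$ fraction of the mass. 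I would then apply a random partition into $O(k)$ pieces together with a bounded‑overlap partition of unity $\{\eta_C\}$ subordinate to it, each $\eta_C$ being $(1/w)$‑Lipschitz for a ``padding width'' $w$, and set $\psi_C(v)=\|F(v)\|\cdot\eta_C(\bar F(v))$. Expanding $(\psi_C(u)-\psi_C(v))^2$ into a term controlled by $\|F(u)-F(v)\|^2$ and a term controlled by $w^{-2}d(u,v)^2$, and summing over $C$ using bounded overlap, one gets that the Rayleigh quotients summed over the $O(k)$ pieces total at most $O(w^{-2}\lambda_{2k})$ after dividing by the mass. The three regimes correspond to the three values of the padding width: in the \emph{planar} case one partitions the graph $G$ itself with edge lengths $\ell(uv)=d(u,v)$ and uses the Klein--Plotkin--Rao padded decomposition ($w=\Theta(1)$); in the \emph{$K_h$‑minor‑free} case the Fakcharoenphol--Talwar decomposition gives $w=\Theta(1/h^2)$, hence $R=O(h^4\lambda_{2k})$ and $\sqrt{2R}=O(h^2\sqrt{\lambda_{2k}})$; in the \emph{general} case one partitions the Euclidean metric on the sphere, and a black‑box metric decomposition into $O(k)$ parts gives only $w=\Theta(1/\log k)$.

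The crux — and the step I expect to be the main obstacle — is twofold. First, from the $O(k)$ resulting pieces one must extract $k$ that are simultaneously \emph{heavy} (mass $\Omega(1)$, so they survive Cheeger rounding) and \emph{low‑energy}; this is exactly why one works with the bottom $2k$ eigenvectors rather than $k$: the mass budget $2k$ and the energy budget $\lesssim w^{-2}\cdot 2k\,\lambda_{2k}$, spread over only $O(k)$ pieces, force all but $O(k)\epsilon$ pieces to be heavy and all but $O(k)\epsilon$ to have energy $O(w^{-2}\lambda_{2k}/\epsilon)$, leaving an overlap of at least $k$ good pieces — but this averaging is legitimate only because the number of pieces is $O(k)$, which is precisely why the partition loss must be controlled by the number of parts ($\log k$, not $\log n$). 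Second, and harder, obtaining $w=\Theta(1/\sqrt{\log k})$ rather than $\Theta(1/\log k)$ in the general case: a worst‑case metric decomposition is too lossy, and one must exploit the genuine Euclidean (spectral) structure of the embedded points on $S^{2k-1}$ — via a Gaussian‑type random partition whose separation probability scales like $\sqrt{\log k}\cdot d(u,v)$ — to get the square‑root saving. That saving is what makes the bound $O(\sqrt{\lambda_{2k}\log k})$ tight for the noisy hypercube, and verifying it, together with the heavy/low‑energy count, is where the real work lies; the planar and minor‑free refinements are then immediate consequences of the corresponding padded decompositions for those graph classes.
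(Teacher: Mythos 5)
Your overall architecture — spectral embedding $F:V\to\mathbb R^{2k}$ from the first $2k$ eigenfunctions, radial projection, spreading from isotropy, a graph pseudometric $\hat d_F$ with edge lengths $d_F(u,v)$ partitioned by a padded decomposition for the minor-free case, and averaging to extract $k$ heavy low-energy pieces from $O(k)$ — is exactly the paper's, and for the planar and $K_h$-minor-free bounds the bump-function route you describe is essentially the paper's proof (Lemma \ref{lem:bump}, Lemma \ref{lem:ksets}, Theorem \ref{thm:ksets}, Theorem \ref{thm:fewpieces}). For the general bound $O(\sqrt{\lambda_{2k}\log k})$, however, you correctly flag the difficulty but leave two genuine gaps.

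First, the ``Gaussian-type random partition whose separation probability scales like $\sqrt{\log k}\cdot d(u,v)$'' on $S^{2k-1}$ does not exist: the points live in dimension $2k$, and the best Lipschitz constant for a $\Delta$-bounded random partition of a subset of $\mathbb R^{2k}$ is $\Theta(\sqrt{k})$ (Theorem \ref{thm:RkLip}), with padded partitions costing still more. The missing step is \emph{dimension reduction}. One first applies a random Gaussian map $\Gamma:\mathbb R^{2k}\to\mathbb R^h$ with $h=O(\log k)$ to obtain $F^*=\Gamma\circ F$. Showing $\mathcal R_G(F^*)\lesssim\mathcal R_G(F)$ is routine, but showing the \emph{spreading} property survives projection — despite the fact that isotropy, which was your source of spreading, is destroyed — is delicate; this is the content of Lemma \ref{lem:dimreduce}, and it is a key new lemma, not a black-box reduction.

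Second, even in $\mathbb R^h$ with $h=O(\log k)$, your partition-of-unity construction $\psi_C(v)=\|F(v)\|\,\eta_C\bigl(F(v)/\|F(v)\|\bigr)$ followed by separate Cheeger sweeps yields only $\log k$, not $\sqrt{\log k}$. To build $(1/w)$-Lipschitz bumps $\eta_C$ supported near each piece, one needs a \emph{padded} decomposition, whose parameter in $\mathbb R^h$ is $O(h/\delta)$ (Theorem \ref{thm:rkpad}), worse than the Lipschitz constant $O(\sqrt h)$. So $w\sim\Delta/h$, $\mathcal R_G(\psi_C)$ picks up $w^{-2}\sim (\log k)^2$, and Cheeger gives only $\phi_G\lesssim\log k\cdot\sqrt{\lambda_{2k}}$ — this is exactly Theorem \ref{thm:betterfewsets}. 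The square-root improvement comes from discarding the bump functions entirely: take a $(\Delta,O(\sqrt h))$-\emph{Lipschitz} partition, intersect each piece $S_i$ with a random superlevel set $\hat S_i=\{v\in S_i:\|F(v)\|^2\geq\tau\}$, and bound $\E\bigl[\sum_i w(E(\hat S_i,\overline{\hat S_i}))\bigr]\big/\E\bigl[\sum_i w(\hat S_i)\bigr]$ directly. The key estimate is that the probability $\{u,v\}$ is cut in this joint process is at most $O(\sqrt{h}/\Delta)\,(\|F(u)\|+\|F(v)\|)\,\|F(u)-F(v)\|$, and Cauchy--Schwarz then gives $O(\sqrt h/\Delta)\sqrt{\mathcal R_G(F)}$ with no padding loss (Lemma \ref{lem:mwcheeger}, Lemma \ref{lem:mwcheegerapp}). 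This ``multi-way Cheeger'' rounding is the ingredient your outline does not have, and without it the square root in $\sqrt{\log k}$ cannot be obtained.
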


We remark that the bound \eqref{eq:neartight} holds with $2k$ replaced by $(1+\delta)k$ for any $\delta > 0$,
but where the leading constant now becomes $\delta^{-3}$; see Corollary \ref{cor:optfewsets}.
Louis, Raghavendra, Tetali and Vempala \cite{LRTV12}
have independently proved a somewhat weaker version of the bound \eqref{eq:neartight},
using rather different techniques.
Specifically, they show that there exists an absolute constant $C > 1$
such that $\rho_G(k) \leq O(\sqrt{\lambda_{Ck} \log k})$.

In particular, Theorem \ref{thm:fewer} has applications to the small-set expansion problem
in graphs, which is fundamentally connected to the Unique Games Conjecture
and many other problems in approximation algorithms (see \cite{RS10, RST10}).
To capture the expansion of small sets in graphs, we define the value,
$$
\varphi_G(k) = \min_{S \leq |V|/k} \phi_G(S)\,.
$$
Clearly $\varphi_G(k) \leq \rho_G(k)$ for every $k \in \mathbb N$.

\medskip

Arora, Barak and Steurer \cite{ABS10} prove the bound,
$$\varphi_G(k^{1/100}) \leq O(\sqrt{\lambda_{k} \log_k n}),$$
where $n=|V|$.  Note that for $k = n^{\varepsilon}$ and $\epsilon \in (0,1)$,
one achieves an upper bound of $O(\sqrt{\lambda_k})$,
and this small loss in the expansion constant
is crucial for applications to approximating small-set expansion.
This was improved further in Steurer's thesis \cite{SteurerThesis} by
showing that for every $\alpha > 0$,
$$
\varphi_G(k^{1-\alpha})\leq O(\sqrt{(\lambda_{k}/\alpha) \log_k n})\,.
$$
Such a bound is also obtained in the works \cite{OT12,OW12}.
These bounds work fairly well for large values of $k$,
but give less satisfactory results when $k$ is smaller.

\medskip

Louis, Raghavendra, Tetali and Vempala \cite{LRTV11}
proved that
$$
\varphi_G(\sqrt{k}) \leq O(\sqrt{\lambda_k \log k}),
$$
and conjectured that $\sqrt{k}$ could be replaced by $k$.
Theorem \ref{thm:fewer} immediately yields,
\begin{equation}\label{eq:almost}
\varphi_G(k/2) \leq O(\sqrt{\lambda_k \log k})
\end{equation}
resolving their conjecture up to a factor of 2 (and actually,
as discussed earlier, up to a factor of $1+\delta$ for every $\delta > 0$).

Moreover, \eqref{eq:almost} is quantitatively optimal for the noisy hypercube graphs (see Section \ref{sec:noisycube}),
yielding an optimal connection between the $k$th Laplacian eigenvalue and expansion of sets of size $\approx n/k$.

It is interesting to note that in \cite{KLPT11}, it is shown that for $n$-vertex, bounded-degree planar
graphs, one has $\lambda_k = O(k/n)$.
Thus the spectral algorithm guaranteeing \eqref{eq:planarintro} partitions such a planar graph
into $k$ disjoint pieces, each of expansion $O(\sqrt{k/n})$.
This is tight, up to a constant factor, as one can easily see for an $\sqrt{n} \times \sqrt{n}$ planar grid, in
which case the set of size $\approx n/k$ with minimal expansion is a $\sqrt{n/k} \times \sqrt{n/k}$ subgrid.

\medskip
\noindent
{\bf Large gaps in the spectrum.}  We recall that in the practice of spectral clustering,
it is often observed that the correct number of clusters is indicated by
a large gap between adjacent eigenvalues, i.e., if $\lambda_{k+1} \gg \lambda_k$,
then one expects the input graph can be more easily partitioned
into $k$ pieces than $k+1$.  In Section \ref{sec:gaps}, we prove a result supporting this phenomenon.

\begin{theorem}
There is a constant $C > 0$ such that
for every graph $G$ and $k \in \mathbb N$, the following holds.
If $\lambda_{4k} \geq C (\log k)^2 \lambda_{2k}$, then
$$
\rho_G(k) \leq O(\sqrt{\lambda_{2k}})\,.
$$
\end{theorem}

The key point is that the implicit constant in the upper bound is independent of $k$, unlike
the bound \eqref{eq:neartight}.

\medskip
\noindent
{\bf The relation to hyperboundedness and spectral gaps of Markov operators.}
Consider a probability space $(\Omega,\mathcal F,\mu)$.  A self-adjoint operator
$M : L^2(\mu) \to L^2(\mu)$ is said to be {\em Markovian} if, whenever $f \in L^2(\mu)$, we have
$f \geq 0 \implies Mf \geq 0$ and $M \1 = \1$.
One says that $M$ is {\em ergodic} if $Mf=f$ implies that $f$ is a multiple of $\1$.

Such an operator $M$ may not have any eigenvectors other than $\1$, but one defines
its spectrum $\sigma(M)$ to be the set of $\lambda \in [-1,1]$ such that $\lambda I-M$ fails
to be invertible.  An ergodic Markov operator $M$ is said to have a {\em spectral gap} if
there is a $\delta > 0$ such that $\sigma(M) \subseteq \{1\} \cup [-1,1-\delta]$.
Finally, say that $M$ is {\em hyperbounded} if there exists a $p > 2$ such that
$$
\|M\|_{L^2(\mu) \to L^p(\mu)} < \infty\,.
$$
In \cite{Miclo13}, the following theorem is proved.

\begin{theorem}[Miclo]
\label{thm:miclonew}
If a self-adjoint, ergodic Markov operator is hyperbounded, then it has a spectral gap.
\end{theorem}

This was conjectured by Simon and H$\o$egh-Krohn \cite{SH72} for the special case of
Markov semi-groups.  They actually indicated that the conjecture was probably
false even in this specialized setting.
Miclo uses Theorem \ref{thm:kway} as a fundamental step in the proof of Theorem \ref{thm:miclonew}.
The basic idea is to relate the operator $2 \rightarrow p$ norm to expansion
of small sets in a graph (or, more generally, in the underlying probability space $(\Omega,\mu)$).
Then one uses Theorem \ref{thm:kway} to relate expansion of small sets to the spectrum of the operator.
One can consult \cite{BBHKSZ12} for a detailed discussion of operator norms and small-set expansion
from a computational perspective.

\remove{
To explain the relationship to higher-order Cheeger inequalities, we give a very
rough indication of Miclo's argument.
Suppose that $M$ does not have a spectral gap, i.e. $1$ is an accumulation point of $\sigma(M)$.
We will sketch the argument that, in this case, $M$ is not hyperbounded.

Under our assumption, one constructs
a family of graphs $\{G_n\}$ with associated random walk operators $\{A_n\}$ which are progressively better approximations to $M$ (in the sense that
 how $A_n$ acts on $L^2(G_n)$ is ``close'' to how $M$ operates on $L^2(\mu)$).
Since $M$ does not have a spectral gap,
 one can require that for every $k \in \mathbb N$ and $\varepsilon > 0$, for all $n \geq n(k,\varepsilon)$,
 the operator $A_n$ has $k$ eigenvalues greater than $1-k^{-6}$.

 Take $V_n$ to be the vertex set of $G_n$.
 For simplicity of this sketch, we will assume that $A_n$ has uniform stationary measure on $V_n$.
Applying Theorem \ref{thm:kway} to $G_n$, one concludes that there is a subset of vertices $S_n \subseteq V_n$
of size at most $n/k$ whose expansion is $O(k^2 \sqrt{k^{-6}}) = O(1/k)$.

For a function $f : V_n \to \mathbb R$, define
$$\|f\|_{L^p(G_n)} \defeq \left(\frac{1}{|V_n|} \sum_{v \in V_n} |f(v)|^p\right)^{1/p}\,.$$
If $\1_{S_n}$ denotes the indicator function of $S_n$,
then we have $\|\1_{S_n}\|_{L^2(G_n)} = \sqrt{|S_n|/|V_n|}$ and the expansion bound implies
(see \cite[Prop. 3]{Miclo13}) that
$$\|A_n \1_{S_n}\|^p_{L^p(G_n)} \geq \frac{(1-O(1/k))^p}{2} \frac{|S_n|}{|V_n|}\,.$$
Therefore,
$$
\frac{\|A_n \1_{S_n}\|_{L^p(G_n)}}{\|\1_{S_n}\|_{L^2(G_n)}} \geq \frac{1-O(1/k)}{2} \left(\frac{|V_n|}{|S_n|}\right)^{\frac12 - \frac{1}{p}}
\geq \frac{1-O(1/k)}{2} k^{\frac12 - \frac{1}{p}}\,.
$$
This shows that for any $p > 2$, we have
$\|A_n\|_{L^2(G_n)\to L^p(G_n)} \to \infty$ as $n \to \infty$ (because then also $k \to \infty$ by construction).  Since $\{A_n\}$
were approximations to $M$, one can conclude that $\|M\|_{L^2(G_n) \to L^p(G_n)}$ is unbounded for every $p > 2$, and hence $M$ is not hyperbounded.
}

\subsection{High-dimensional spectral partitioning}

We now present an overview of the proofs of our main theorems, as well
as explain our general approach to multi-way spectral partitioning.
Let $G=(V,E)$ be an undirected, $d$-regular graph.
To begin, for any $f : V \to \ell_2$, we recall the {\em Rayleigh quotient},
$$
\mathcal R_G(f) \defeq \frac{\sum_{\{u,v\} \in E} \|f(u)-f(v)\|^2}{d \sum_{u \in V} \|f(u)\|^2}\,.
$$

Cheeger's inequality (see Lemma \ref{lem:cheeger})  proves
that for any $f~:~V \to \ell_2$, it is possible to find a subset $S \subseteq \{ v \in V : f(v) \neq 0\}$ such that
that $\phi_G(S) \leq \sqrt{2 \mathcal R_G(f)}$.
Thus in order to find $k$ disjoint, non-expanding subsets $S_1, S_2, \ldots, S_k \subseteq V$,
it suffices to find $k$ disjointly supported functions $\psi_1, \psi_2, \ldots, \psi_k : V \to \ell_2$
such that $\mathcal R_G(\psi_i)$ is small for each $i =1,2,\ldots, k$.

In fact, in the same paper that Miclo conjectured the validity of Theorem \ref{thm:kway}, he
conjectured that finding such a family $\{\psi_i\}$ should be possible \cite{Miclo2008,DJL12}.  We resolve this conjecture
and prove the following theorem in Section \ref{sec:higherorder}.

\begin{theorem}\label{thm:miclo}
For any graph $G=(V,E)$ and any $k \in \mathbb N$, there exist disjointly supported functions $\psi_1, \psi_2, \ldots, \psi_k : V \to \mathbb R$
such that for each $i=1,2,\ldots, k$, we have
$$
\mathcal R_G(\psi_i) \leq O(k^6)\, \lambda_k\,.
$$
\end{theorem}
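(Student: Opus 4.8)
The plan is to start from the $k$-dimensional spectral embedding $F : V \to \mathbb R^k$ whose coordinates are the bottom $k$ eigenfunctions of $L$; after normalizing, $\sum_{u} d_u \|F(u)\|^2 = k$ (or we may work with $\sum_u \|F(u)\|^2 = 1$ and track the degree weights) and $\sum_{\{u,v\}\in E} \|F(u) - F(v)\|^2 \le k\lambda_k$, simply because each of the $k$ eigenfunctions has Rayleigh quotient at most $\lambda_k$. Thus $F$ itself is a low-energy test map; the whole game is to chop its image into $k$ disjointly supported pieces without blowing up the energy by more than a $\mathrm{poly}(k)$ factor. The key geometric observation is that the embedded points, viewed with weights $d_u \|F(u)\|^2$, form a probability measure on $\mathbb R^k$ that is "spread out" in the sense of a spectral/isotropy-type bound: for every unit vector (or more precisely because of orthonormality of the eigenfunctions) no direction carries too much of the mass concentrated near the origin. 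Equivalently, radial projection $u \mapsto F(u)/\|F(u)\|$ onto the sphere $S^{k-1}$, pushing forward the mass $d_u\|F(u)\|^2$, cannot be too concentrated — any spherical cap of small angular radius captures a bounded fraction of the mass.

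From here the strategy is the standard "separated sets on the sphere" argument. First I would find $k$ regions $A_1,\dots,A_k \subseteq S^{k-1}$ that are pairwise at angular distance $\ge \Delta$ for some $\Delta = \Omega(1/\mathrm{poly}(k))$ and such that the radial-projection mass landing in each $A_i$ is at least, say, $\Omega(1/k)$ of the total: this follows from a greedy/pigeonhole covering argument using the spreading bound above (if no such family existed, too much mass would pile into a few small caps, contradicting the spectral bound). Next, for each $i$ define the "smoothed indicator" $\psi_i(u) = \|F(u)\| \cdot \bigl(1 - \tfrac{1}{\Delta}\, d_{S}(F(u)/\|F(u)\|, A_i)\bigr)_+$, i.e.\ a radial bump that equals $\|F(u)\|$ when the direction of $F(u)$ lies in $A_i$ and decays linearly to $0$ as the direction moves angular distance $\Delta$ away. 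Because the $A_i$ are $\Delta$-separated, the supports of the $\psi_i$ are disjoint. One then bounds $\mathcal R_G(\psi_i)$ by a Lipschitz-type estimate: $|\psi_i(u) - \psi_i(v)| \lesssim \|F(u)-F(v)\| + (\text{Lipschitz constant of the radial bump})\cdot(\text{something})$, and since the bump has Lipschitz constant $O(1/\Delta)$ on the sphere while radial distance on the sphere relates to $\|F(u)-F(v)\|/\min(\|F(u)\|,\|F(v)\|)$, a short computation gives $\sum_{\{u,v\}\in E}|\psi_i(u)-\psi_i(v)|^2 \lesssim \Delta^{-2}\sum_{\{u,v\}\in E}\|F(u)-F(v)\|^2 \le \Delta^{-2} k\lambda_k$, while the denominator $\sum_u d_u \psi_i(u)^2 \gtrsim$ (mass of $A_i$) $\gtrsim 1/k$ after subtracting the contribution of points whose radius is tiny (which we can discard since they contribute negligibly to everything — here one may need a preliminary truncation, throwing away vertices $u$ with $\|F(u)\|$ below a $\mathrm{poly}(1/k)$ threshold, and checking the discarded mass is a small constant fraction so the remaining mass still splits into $k$ heavy directions). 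Combining, $\mathcal R_G(\psi_i) \lesssim \Delta^{-2} k^2 \lambda_k = O(k^6)\lambda_k$ once $\Delta = \Theta(1/k^2)$, which is the bound we want.

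The main obstacle is the quantitative "spreading" lemma and the matching packing step: one must show that the radially-projected eigenvector mass is diffuse enough on $S^{k-1}$ to host $k$ well-separated regions of mass $\Omega(1/k)$ each, and get an angular separation $\Delta$ as large as $\Omega(1/k^2)$, since this $\Delta$ feeds quadratically (twice!) into the final bound. The right way to get the spreading bound is to use the orthonormality $\sum_u d_u F_i(u)F_j(u) = \delta_{ij}$: for any unit vector $\theta \in \mathbb R^k$, $\sum_u d_u \langle F(u),\theta\rangle^2 = 1$, so no halfspace through the origin (and hence no spherical cap, after accounting for radius) can carry more than a controlled amount of the mass $d_u\|F(u)\|^2$; iterating this over an $\epsilon$-net of directions and doing a greedy extraction of heavy, separated caps yields the family $\{A_i\}$. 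The secondary technical point is the truncation/normalization bookkeeping — ensuring that after deleting low-radius vertices the total remaining mass is still $\Omega(1)$ and still splits $k$ ways — but this is routine once the spreading estimate is in hand. (A cleaner route, which the paper likely takes, is to replace the crude halfspace bound by a "spectral embedding is an $\ell_2^2$-metric with bounded effective dimension" statement and invoke a random-projection / random-partition step, but the greedy-caps argument above already delivers the stated $O(k^6)$.)
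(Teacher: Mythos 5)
Your high-level plan matches the paper's very closely: take the spectral embedding $F : V \to \mathbb R^k$, exploit the isotropy $\sum_v w(v)\langle x, F(v)\rangle^2 = 1$ to deduce a spreading bound in the radial projection pseudometric $d_F(u,v) = \bigl\|\tfrac{F(u)}{\|F(u)\|} - \tfrac{F(v)}{\|F(v)\|}\bigr\|$ (this is exactly the paper's Lemma~\ref{lem:spread}), then find $\Delta$-separated regions of mass $\Omega(1/k)$ and localize $F$ onto each with a linearly decaying radial bump (the paper's Lemma~\ref{lem:bump}). Your Lipschitz estimate for the bump, $|\psi_i(u)-\psi_i(v)| \lesssim (1+1/\Delta)\|F(u)-F(v)\|$, matches Lemma~\ref{lem:bump}(iii).

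The genuine gap is the ``separated heavy regions'' step. You assert that one can greedily/pigeonhole $k$ pairwise $\Delta$-separated regions on $S^{k-1}$ each carrying $\Omega(1/k)$ of the $\ell^2$ mass, with the justification ``if no such family existed, too much mass would pile into a few small caps.'' This does not follow from the spreading bound alone. The spreading bound is a one-sided upper bound: it says no $\Delta$-diameter set is too heavy; it gives no lower bound on the mass captured by any cap, nor any structure that would let a greedy extraction continue. Concretely: (a) a greedy step ``pick the heaviest cap, then carve out a $\Delta$-neighborhood'' has no lower bound on the mass of the cap it picks at step $j$, and in $k$ dimensions the remaining mass could be spread across exponentially many $\Delta$-caps so that each individually is negligible; (b) even if you first partition into $\Delta$-bounded cells (so each has mass $\lesssim 1/k$), merging them into $k$ groups does not give \emph{separation} --- adjacent cells can land in different groups, and removing a $\beta$-boundary layer to force separation can, for a fixed (non-random) partition, delete essentially all the mass. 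What makes the paper's argument go through is precisely the \emph{padded random partition} of $\mathbb R^k$ (Theorem~\ref{thm:rkpad}, with padding parameter $\alpha = O(k/\delta)$): it guarantees that a $1-\delta$ fraction of the mass lands deep in the interior of some cell, so the interiors can be merged into $\geq (1-\delta)k$ groups of mass $\Omega(1/k)$ that are automatically $2\Delta/\alpha$-separated in $\hat d_F$ (Lemma~\ref{lem:ksets}). This is a real theorem about metric space decompositions, not a consequence of spreading plus pigeonhole, and your proposal needs it (or a comparable Lipschitz/padded partition argument) to close.

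A smaller point: your Rayleigh quotient arithmetic is off by a factor of $k$. With your normalization $\sum_u w(u)\|F(u)\|^2 = 1$, the numerator is $\lesssim \Delta^{-2}\lambda_k$ (not $\Delta^{-2}k\lambda_k$) and the denominator is $\gtrsim 1/k$, giving $\mathcal R_G(\psi_i) \lesssim k\,\Delta^{-2}\lambda_k$; alternatively with $\sum_u w(u)\|F(u)\|^2 = k$ the mass lower bound is $\Omega(1)$, not $\Omega(1/k)$. Either way the exponent you should need is $\Delta \approx k^{-5/2}$ for a $k^6$ bound, not $k^{-2}$, and this is consistent with the paper (diameter $\Delta \approx k^{-1/2}$, padding/separation $\approx \Delta/\alpha \approx k^{-5/2}$, plus a $1/\delta \approx k$ averaging loss from Lemma~\ref{lem:manybumps} since the localization estimate only controls the \emph{sum} of the numerators over $i$).
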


To prove this, we start with an orthonormal system of eigenfunctions of the Laplacian, $$f_1, f_2, \ldots, f_k : V \to \mathbb R\,,$$
where $f_i$ has eigenvalue $\lambda_i$.  We then construct the embedding $F : V \to \mathbb R^k$
given by \begin{equation}\label{eq:specembedding}
F(v) = (f_1(v), f_2(v), \ldots, f_k(v))\,.\end{equation}
Observe that $\mathcal R_G(F) \leq \lambda_k$.

Thus our goal is now to ``localize'' $F$ on $k$ disjoint regions
to produce disjointly supported functions $\psi_1, \psi_2, \ldots, \psi_k : V \to \mathbb R^k$, each
with small Rayleigh quotient.
(It is elementary to see that for any map $\psi : V \to \mathbb R^k$, there exists
some coordinate $j \in \{1,2,\ldots,k\}$ such that the $\mathbb R$-valued map $\tilde \psi(v) = \psi(v)_j$ has
$\mathcal R_G(\tilde \psi) \leq \mathcal R_G(\psi)$.)
In order to ensure that $\mathcal R_G(\psi_i)$
is small for each $i$, we must ensure that each region captures a large fraction of the $\ell^2$ mass of $F$,
and that our localization process is sufficiently smooth.

\medskip
\noindent
{\bf Isotropy and spreading.}
The first problem we face is that, in order to find $k$ disjoint regions each with large $\ell^2$ mass, it should be that
the $\ell^2$ mass of $F$ is sufficiently well-spread.
This follows from the following {\em isotropy} property of $F$ (see Lemma
\ref{lem:spread}):  For any vector $x \in S^{k-1}$  (the unit sphere of $\mathbb R^k$),
\begin{equation}\label{eq:isotropy}
\sum_{v \in V} \langle x, F(v)\rangle^2 = 1\,.
\end{equation}
On the other hand, it straightforward to check that,
$$
\sum_{v \in V} \|F(v)\|^2 = k\,,
$$
thus it is impossible for the $\ell^2$ mass of $F$ to ``concentrate'' along fewer than
$k$ directions $x_1, x_2, \ldots, x_k \in S^{k-1}$.

A natural approach would be to find (at least) $k$ such directions, and then define,
$$
\psi_i(v) = \begin{cases}
F(v) & \textrm{if $F(v)$ has large projection on $x_i$} \\
0 & \textrm{otherwise.}
\end{cases}
$$
Unfortunately, this sharp cutoff could make the value $$\sum_{\{u,v\} \in E} \|\psi_i(u)-\psi_i(v)\|^2,$$
much larger than the corresponding quantity for $F$.  Thus we must pursue a smoother approach
for localizing $F$.

\begin{figure}
\begin{center}
\begin{tikzpicture}[inner sep=1.2pt,scale=.85,pre/.style={<-,shorten <=2pt,>=stealth,thick}, post/.style={->,shorten >=1pt,>=stealth,thick}]
\draw[-] (-3,0) -- (3.5,0);
\draw[->,line width=1.2pt] (3.35,0) -- (3.5,0);
\draw[-] (0,-3) -- (0,3.5);
\draw[->,line width=1.2pt] (0,3.35) -- (0,3.5);
\path [-] (3,0) edge node [below] {$f_2$} (3.5,0);
\path [-] (0,3) edge node [above left] {$f_3$} (0,3.5);
\tikzstyle{every node} = [draw, circle,fill,black];

\foreach \r/\a in {1/317, 1.5/310, 2/322, 2.5/315,3/320} {
\path  (0:0)+(\a:\r) node [fill=black]  (a_\r) {};
}
\foreach \r/\a in {1/93, 1.5/82, 2/87, 2.5/93,3/97} {
\path  (0:0)+(\a:\r) node [fill=black]  (a_\r) {}; }
\foreach \r/\a in {1/217, 1.5/230, 2/225, 2.5/233,3/220} {
\path  (0:0)+(\a:\r) node [fill=black]  (a_\r) {}; }

\foreach \ang in {80, 100, 215,235,305,325}{
\draw[dashed,line width=1.2pt] (0,0) -- (\ang:3.5cm);
}


\end{tikzpicture}
\caption{Partitioning according to the radial distance.\label{fig:shayan}}
\end{center}
\end{figure}
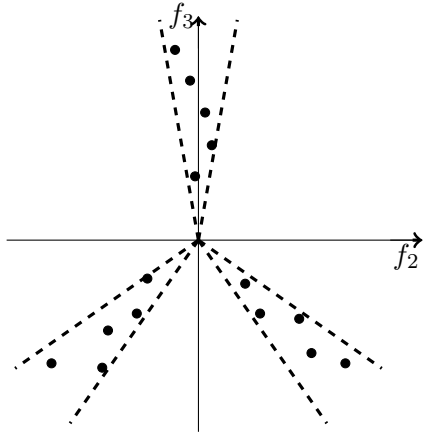

\medskip
\noindent
{\bf The radial projection distance.}
Our method of smooth localization depends crucially on defining a proper notion of
distance between vertices, based on the map $F$.    We would like to think of two vertices $u,v \in V$
as close if their Euclidean distance $\|F(u)-F(v)\|$ is small compared to their norms $\|F(u)\|, \|F(v)\|$.
To capture this, we define the {\em radial projection distance} via,
$$
d_F(u,v) = \left\|\frac{F(u)}{\|F(u)\|} - \frac{F(v)}{\|F(v)\|}\right\|\,.
$$
Note that a ball in $d_F$ corresponds to a cone in $\mathbb R^k$; see Figure \ref{fig:shayan}.

Our goal now becomes to find separated regions $S_1, \ldots, S_k \subseteq V$ in $d_F$, each of which contains a large fraction of
the $\ell^2$ mass of $F$.  If these regions are far enough apart, then there is a way to allow $\psi_i$
to degrade gracefully off of $S_i$, ensuring that $\mathcal R_G(\psi_i)$ remains small; see Lemma \ref{lem:bump}.

The isotropy condition \eqref{eq:isotropy} gives us the following spreading property of $d_F$:
If $S \subseteq V$, then
\begin{equation}\label{eq:introspread}
\diam(S, d_F) \leq \frac{1}{2} \implies \sum_{v \in S} \|F(v)\|^2 \leq \frac{2}{k} \sum_{v \in V} \|F(v)\|^2\,.
\end{equation}
In other words, sets of small $d_F$-diameter cannot contain a large fraction of the $\ell^2$ mass.
This will be essential in finding regions $\{S_i\}$.

\medskip
\noindent
{\bf Finding separated regions:  Random space partitions.}
In order to find many separated regions,
we rely on the theory of random partitions discussed in Section \ref{sec:rps}.
Roughly speaking, this partitions $\mathbb R^k$ (and thus our set of points)
randomly into pieces of diameter at most $1/2$ so that the expected fraction of $\ell^2$ mass
which is close to the boundary of the partition is small.  Thus we can take unions
of the interiors of the pieces to find separated sets.  Furthermore, no set
in the partition can contain a large fraction of the $\ell^2$ mass, due to the
spreading property of $d_F$ \eqref{eq:introspread}.
This is carried out in Section \ref{sec:randompart}.
We use these separated sets as the supports of our family $\{\psi_i\}$,
allowing us to complete the proof of Theorem \ref{thm:miclo}.
Our use of random partitions to construct disjoint bump functions
is similar to the approach in \cite{BLR08,KLPT11}.

The notion of ``close to the boundary'' depends on the dimension $k$, and thus the smoothness of
our maps $\{\psi_i\}$ will degrade as the dimension grows.
For many families of graphs, however, we can appeal
to special properties of their intrinsic geometry.

\medskip
\noindent
{\bf Exploiting the intrinsic geometry.}
It is well-known that the shortest-path metric on a planar graph
has many nice properties, but $d_F$ is, in general, not a shortest-path geometry.
Thus it is initially unclear how one might prove a bound like \eqref{eq:planarintro}
using our approach.  The answer is to combine information from
the spectral embedding with the intrinsic geometry of the graph.

We define $\hat d_F$ as the shortest-path pseudometric on $G$,
where the length of an edge $\{u,v\} \in E$ is precisely $d_F(u,v)$.
In Sections \ref{sec:smoothlocal} and \ref{sec:randompart},
we show that it is possible to do the partitioning in the metric $\hat d_F$,
and thus for planar graphs (and other generalizations), we are able
to achieve dimension-independent bounds in Theorem \ref{thm:fewer}.

This technique also addresses a common shortcoming of spectral methods:
The spectral embedding can lose auxiliary information about the input data
that could help with clustering.  Our ``hybrid'' technique for planar graphs
suggests that such information (in this case, planarity) can be fruitfully combined with
the spectral computations.

\medskip
\noindent
{\bf Dimension reduction.}
In order to obtain
the tight bound \eqref{eq:neartight} for general graphs, we have to improve
the quantitative parameters of our construction.
The main loss in our preceding construction comes from the ambient dimension $k$.

Thus our first step is to apply dimension-reduction techniques:
We randomly project our points from $\mathbb R^k$ into $\mathbb R^{O(\log k)}$.
Let $F' : V \to \mathbb R^{O(\log k)}$ be the resulting map.
While it is easy to see that $\mathcal R_G(F') \asymp \mathcal R_G(F)$ with
high probability, it is not,
a priori, clear why $O(\log k)$ dimensions suffices for maintaining the
spreading properties of $F$.  Indeed, the isotropy condition \eqref{eq:isotropy}
will generally fail for $F'$.
Although the proof is delicate (see Lemma \ref{lem:dimreduce}),
the basic idea is this:  If $d_F$ satisfies \eqref{eq:introspread},
but $d_{F'}$ fails to satisfy a related property, then a $\gg \frac{1}{k}$
fraction of the $\ell^2$ mass has to have moved significantly in the dimension reduction step,
and such an event is unlikely for a random mapping into $O(\log k)$ dimensions.

\medskip
\noindent
{\bf A new multi-way Cheeger inequality.}
Dimension reduction only yields a loss of $O(\log k)$ in \eqref{eq:neartight}.
In order to get the bound down to $\sqrt{\log k}$, we abandon
our goal of localizing eigenfunctions.  In Section \ref{sec:mwcheeger},
we give a new multi-way Cheeger rounding algorithm that combines
random partitions of the radial projection distance $d_F$,
and random thresholding based on $\|F(\cdot)\|$ (as in Cheeger's inequality).
By analyzing these two processes simultaneously, we are able to achieve
\eqref{eq:neartight}.
In addition, we use this method to achieve the stated bound in \eqref{eq:kway}.

\subsection{A general algorithm}
\label{sec:genalg}

Given a graph $G=(V,E)$ and any embedding $F : V \to \mathbb R^k$
(in particular, the spectral embedding \eqref{eq:specembedding}), our approach
yields a general algorithmic paradigm for finding many non-expanding sets.
For some $r \in \mathbb N$, do the following:
\begin{enumerate}
\item {\bf (Radial decomposition)}

Find disjoint subsets $S_1, S_2, \ldots, S_r \subseteq V$ using the values
$\{ F(v)/\|F(v)\| : v \in V \}$.

\item {\bf (Cheeger sweep)}

 For each $i=1,2,\ldots, r$,
\begin{quote}
Sort the vertices $S_i = \{ v_1, v_2, \ldots, v_{n_i} \}$
so that $$\|F(v_1)\| \geq \|F(v_2)\| \geq \cdots \geq \|F(v_{n_i})\|\,.$$
Output the least-expanding set among the $n_i-1$ sets of the form,
$$
\{v_1, v_2, \ldots, v_j \}
$$
for $1 \leq j \leq n_i-1$.
\end{quote}
\end{enumerate}

As discussed in the preceding section, each of our main theorems
is proved using an instantiation of this schema.
For instance, the proof of Theorem \ref{thm:kway} partitions using the radial
projection distance $d_F$.   The proof of \eqref{eq:planarintro} uses the
induced shortest-path metric $\hat d_F$.  And the proof of \eqref{eq:neartight}
uses $d_{F'}$ where $F' : V \to \mathbb R^{O(\log k)}$ is obtained
from random projection. The details of the scheme for equation \eqref{eq:neartight}
is provided in Section \ref{sec:algorithm}.
A practical algorithm might use $r$-means to cluster according to the
radial projection distance.

We remark that partitioning the normalized vectors as in step (i) is used
in the approach of \cite{NJW02}, but not in some other
methods of spectral partitioning (see \cite{VM03} for
alternatives).
Unlike \cite{NJW02}, our spectral partitioning algorithm
does not use directly the eigenvectors of the normalized Laplacian;
the vectors we use are multiplied by $D^{1/2}$ where
$D$ is the diagonal degree matrix (see Section \ref{sec:sptheory}).
In other words, we use the right eigenvectors of the
associated random walk matrix.
This is similar to \cite{ShiMalik}, except that they do not
normalize the spectral embedding as in our step (i).

\section{Preliminaries}

Let $G=(V,E,w)$ be a finite, undirected graph, with positive weights $w : E \to (0,\infty)$ on the edges.
For a pair of vertices $u,v \in V$, we sometimes write $w(u,v)$ for $w(\{u,v\})$.
For a subset of vertices $S \subseteq V$, we write $E(S,\overline{S}) := \{ \{u,v\} \in E : |\{u,v\} \cap S| = 1 \}$.
For a subset of edges $F \subseteq E$, we write $w(F) = \sum_{e \in F} w(e)$.
We use $x \sim y$ to denote $\{x,y\} \in E$.
We extend the weight to vertices by defining, for a single vertex $v\in V$, $w(v):=\sum_{u\sim v} w(u,v)$. We can think of $w(v)$ as the weighted degree of vertex $v$.
We will assume throughout that $w(v) > 0$ for every $v \in V$.  For $S \subseteq V$,
we write $w(S) = \sum_{v \in S} w(v)$.

Let $X$ be a set and $d : X \times X \to [0,\infty]$ is a symmetric non-negative function
which may take the value $\infty$.  We refer to $d$ as an {\em extended pseudo-metric on $X$}
if it satisfies the triangle inequality.  For a subset $S \subseteq X$, we write $\diam(S,d) \defeq \sup_{x,y \in S} d(x,y)$,
and for two sets $S,T \subseteq X$, we write $d(S,T) \defeq \inf_{x \in S, y \in T} d(x,y)$.
We also define the ball $B_{d}(x,R) \defeq \{ y \in X : d(x,y) \leq R \}$.

For two expressions $A$ and $B$, we write $A \lesssim B$ for $A \leq O(B)$ and $A \asymp B$
for the conjunction of $A \lesssim B$ and $A \gtrsim B$.

\subsection{Spectral theory of the weighted Laplacian}
\label{sec:sptheory}

We write $\ell^2(V, w)$ for the Hilbert space of functions $f : V \to \mathbb R$ with
inner product $$\langle f,g \rangle_{\ell^2(V,w)} \defeq \sum_{v \in V} w(v) f(v) g(v),$$
and norm $\|f\|_{\ell^2(V,w)}^2 = \langle f,f\rangle_{\ell^2(V,w)}$.
We reserve $\langle \cdot , \cdot \rangle$ and $\|\cdot\|$ for the standard
inner product and norm on $\mathbb R^k$, $k \in \mathbb N$ and $\ell^2(V)$.

We now discuss some operators on $\ell^2(V,w)$.  The adjacency operator
is defined by $A f(v) = \sum_{u \sim v} w(u,v) f(u)$, and the diagonal degree operator by
$D f(v) = w(v) f(v)$.  Then the {\em combinatorial Laplacian} is defined by
$L = D-A$, and the {\em normalized Laplacian} is given by
$$\mathcal L_G \defeq I - D^{-1/2} A D^{-1/2}.$$
Observe that for an unweighted, $d$-regular graph, we have $\mathcal L_G = \frac{1}{d} L$.

Now, if $g : V \to \mathbb R$ is a non-zero function and $f = D^{-1/2} g$, then
\begin{eqnarray*}
\frac{\langle g, \mathcal L_G \,g\rangle}{\langle g,g\rangle} &=& \frac{\langle g, D^{-1/2} L D^{-1/2} g\rangle}{\langle g,g\rangle} \\
&=&
\frac{\langle f, L f\rangle}{\langle D^{1/2} f, D^{1/2} f\rangle} \\
&=&
\frac{\displaystyle \sum_{u \sim v} w(u,v) |f(u)-f(v)|^2}{\displaystyle \sum_{v \in V} w(v) f(v)^2} \defeqb \mathcal R_G(f),
\end{eqnarray*}
where the latter value is referred to as the {\em Rayleigh quotient of $f$ (with respect to $G$)}.

In particular, one sees that $\mathcal L_G$ is a positive-definite operator with
eigenvalues $$0 = \lambda_1 \leq \lambda_2 \leq \cdots \leq \lambda_n \leq 2\,.$$
For a connected graph, the first eigenvalue corresponds
to the eigenfunctions $g = D^{1/2} f$, where $f$ is any non-zero constant function.
Furthermore, by standard variational principles,
\begin{eqnarray}
\lambda_k &=& \min_{g_1, \ldots, g_k \in \ell^2(V)} \max_{g \neq 0} \left\{ \frac{\langle g, \mathcal L_G\, g\rangle}{\langle g, g\rangle} : g \in \mathrm{span}\{g_1, \ldots,
g_k\}\right\} \nonumber \\
&=& \min_{f_1, \ldots, f_k \in \ell^2(V,w)} \max_{f \neq 0}  \left\{ \vphantom{\bigoplus} \mathcal R_G(f) : f \in \mathrm{span}\{f_1, \ldots, f_k\}\right\}, \label{eq:eigenvar}
\end{eqnarray}
where both minimums are over sets of $k$ non-zero orthogonal functions in the Hilbert spaces $\ell^2(V)$ and $\ell^2(V,w)$, respectively.
We refer to \cite{Chung97} for more background on the spectral theory
of the normalized Laplacian.

In particular, one can use \eqref{eq:eigenvar} to easily prove the left-hand side of \eqref{eq:kway} using the following
standard observation.

\begin{lemma}\label{lem:testfun}
Suppose $\psi_1, \psi_2, \ldots, \psi_k : V \to \mathbb R$ are disjointly-supported
functions with $\mathcal R_G(\psi_i)~\leq~c$ for each $i=1,2,\ldots,k$.  Then, $\lambda_k \leq 2c$.
\end{lemma}

\begin{proof}
Consider any $f = \sum_{i=1}^k \alpha_i \psi_i$.  Then, for any $u,v \in V$, we have
$$|f(u)-f(v)|^2 \leq 2 \sum_{i=1}^k \alpha_i^2 |\psi_i(u)-\psi_i(v)|^2\,,$$
using the fact the $\psi_i$'s
are disjointly supported.  Therefore,
\begin{eqnarray*}
\mathcal R_G(f) &=& \frac{\sum_{u \sim v} w(u,v) |f(u)-f(v)|^2}{\sum_{v \in V} w(v) f(v)^2} \\
&\leq & \frac{2 \sum_{i=1}^k \alpha_i^2 \sum_{u \sim v} w(u,v) |\psi_i(u)-\psi_i(v)|^2}{\sum_{i=1}^k \alpha_i^2 \sum_{v \in V} w(v) \psi_i(v)^2} \leq 2c\,.
\end{eqnarray*}
But now \eqref{eq:eigenvar} implies that $\lambda_k \leq \min_{f \in \mathrm{span}(\psi_1, \ldots, \psi_k)} \max_{f \neq 0} \mathcal R_G(f) \leq 2c$.
\end{proof}

Applying the preceding lemma with $\psi_i = \1_{S_i}$ as the indicator functions of disjoint sets $S_1, S_2, \ldots, S_k$ yields the
left-hand side of \eqref{eq:kway}, observing that $\phi_G(S_i) = \mathcal R_G(\1_{S_i})$.

\subsection{Cheeger's inequality with Dirichlet boundary conditions}

Given a subset $S \subseteq V$ by, we denote the {\em Dirichlet conductance} of $S$ by,
$$
\phi_G(S) \defeq \frac{w(E(S,\overline{S}))}{w(S)}\,.
$$
For convenience, we take $\phi_G(\emptyset) = \infty$.
If $\mathcal H$ is a Hilbert space, we extend the notion of Rayleigh quotients
to arbitrary maps $\psi : V \to \mathcal H$ via,
\begin{equation}\label{eq:extral}
\mathcal R_G(\psi)
 \defeq \frac{\displaystyle \sum_{u \sim v} w(u,v) \|\psi(u)-\psi(v)\|_{\mathcal H}^2}{\displaystyle \sum_{v \in V} w(v) \|\psi(v)\|_{\mathcal H}^2}\,.
\end{equation}
In what follows, we use $\supp(\psi) \defeq \{ v \in V : \psi(v) \neq 0 \}$.

Many variants of the following lemma are known; see, e.g. \cite{Chung96}.

\begin{lemma}\label{lem:cheeger}
For any $\psi : V \to \mathcal H$,
there exists a subset $S \subseteq \supp(\psi)$ with
$$
\phi_G(S) \leq \sqrt{2\, \mathcal R_G(\psi)}\,.
$$
\end{lemma}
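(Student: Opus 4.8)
The plan is to reduce the vector-valued case to the scalar case via a standard averaging trick, and then prove the scalar case by the usual Cheeger sweep argument. First I would handle the reduction to $\mathcal H = \mathbb R$. Given $\psi : V \to \mathcal H$, I want a coordinate-like scalar function that inherits a small Rayleigh quotient. Pick an orthonormal basis (or, to avoid infinite-dimensionality issues, restrict to the finite-dimensional span of $\{\psi(v) : v \in V\}$) and write $\psi(v) = \sum_j \psi_j(v) e_j$ with scalar functions $\psi_j : V \to \mathbb R$. Then $\|\psi(u) - \psi(v)\|_{\mathcal H}^2 = \sum_j |\psi_j(u) - \psi_j(v)|^2$ and $\|\psi(v)\|_{\mathcal H}^2 = \sum_j |\psi_j(v)|^2$, so summing numerators and denominators separately gives that $\mathcal R_G(\psi) = \frac{\sum_j N_j}{\sum_j D_j}$ where $N_j, D_j$ are the numerator and denominator of $\mathcal R_G(\psi_j)$. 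By the mediant inequality, some index $j$ has $\mathcal R_G(\psi_j) = N_j/D_j \leq \mathcal R_G(\psi)$, and crucially $\supp(\psi_j) \subseteq \supp(\psi)$. So it suffices to prove the lemma for a scalar function $\psi : V \to \mathbb R$ with the stronger conclusion $S \subseteq \{v : \psi(v) \neq 0\}$, $S \subseteq \supp(\psi)$ being automatic.

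Next I would do the scalar Cheeger sweep. Write $g = \psi$, and by splitting $g = g^+ - g^-$ into positive and negative parts and noting $\mathcal R_G(g^+) , \mathcal R_G(g^-)$ are each bounded in terms of $\mathcal R_G(g)$ (expanding $|g(u)-g(v)|^2 \geq |g^+(u)-g^+(v)|^2 + |g^-(u)-g^-(v)|^2$ and summing denominators), I may assume $g \geq 0$. Now consider the threshold sets $S_t = \{v : g(v)^2 > t\}$ for $t$ chosen uniformly in $[0, \max_v g(v)^2]$, or more precisely with the right density; each $S_t \subseteq \supp(g)$. The standard computation bounds $\mathbb E[w(E(S_t, \overline{S_t}))]$ against $\sum_{u \sim v} w(u,v)\, |g(u)^2 - g(v)^2|$ and $\mathbb E[w(S_t)]$ equals $\sum_v w(v) g(v)^2$ (up to the normalization of $t$). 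Applying Cauchy--Schwarz to $|g(u)^2-g(v)^2| = |g(u)-g(v)|\,|g(u)+g(v)| \leq |g(u)-g(v)| \cdot \sqrt{2}\sqrt{g(u)^2+g(v)^2}$ and then Cauchy--Schwarz again over the edge sum yields
\[
\frac{\mathbb E[w(E(S_t,\overline{S_t}))]}{\mathbb E[w(S_t)]} \leq \sqrt{2\,\mathcal R_G(g)}\,,
\]
and since the ratio of expectations is at least the minimum over $t$ of the pointwise ratio, there is a threshold $t$ with $\phi_G(S_t) \leq \sqrt{2\,\mathcal R_G(g)}$, and this $S = S_t$ is contained in $\supp(\psi)$.

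I do not expect a serious obstacle here; this is a classical argument and the excerpt explicitly flags that "many variants of the following lemma are known." The one point requiring a little care is the reduction step when $\mathcal H$ is infinite-dimensional — one should note that only finitely many directions are used since $V$ is finite, so passing to $\mathrm{span}\{\psi(v) : v \in V\}$ loses nothing — and the bookkeeping of normalization constants in the random-threshold argument (choosing the distribution of $t$ so that $\mathbb E[w(S_t)] = \sum_v w(v) g(v)^2$ exactly, which makes the ratio-of-expectations bound clean). The positive/negative part splitting is the only mildly subtle inequality, and it is routine once one observes the denominators add and the numerator only decreases.
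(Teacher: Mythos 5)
Your proof is correct and the core engine is the same as the paper's: a random threshold ("Cheeger sweep") on the squared norm. The difference is that you first reduce to a scalar function by a coordinate decomposition plus the mediant inequality, whereas the paper runs the sweep directly on the vector-valued $\psi$, thresholding on $\|\psi(u)\|^2$ and using $\bigl|\|\psi(u)\|^2 - \|\psi(v)\|^2\bigr| \le \|\psi(u)-\psi(v)\|\cdot\|\psi(u)+\psi(v)\|$ (Cauchy--Schwarz in $\mathcal H$) followed by $\sum_{u\sim v} w(u,v)\|\psi(u)+\psi(v)\|^2 \le 2\sum_v w(v)\|\psi(v)\|^2$. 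Your scalar reduction is the same trick the paper invokes separately inside the proof of Lemma~\ref{lem:manybumps}, so it is not foreign to the paper's toolkit; it just isn't needed here, since the vector-valued sweep works verbatim. Also, your positive/negative splitting is redundant: once you threshold on $g(\cdot)^2$ the sets $S_t$ are already inside $\supp(g)$ regardless of sign, and the bound $|g(u)+g(v)| \le \sqrt{2}\sqrt{g(u)^2+g(v)^2}$ holds for arbitrary signs, so you never need $g\ge 0$ (that splitting belongs to the classical non-Dirichlet Cheeger proof, where one must control the measure of the cut). In short: correct, but with two unnecessary preprocessing steps relative to the paper's more direct argument.
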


\begin{proof}
Let $\|\cdot\| = \|\cdot\|_{\mathcal H}$.
We may assume that $\supp(\psi) \neq V$, else taking $S=V$ finishes the argument.
For $t \in [0,\infty)$, define a subset $S_t = \{ u \in V : \|\psi(u)\|^2 > t \}$.
Observe that for every $t \geq 0$, the inclusion $S_t \subseteq \supp(\psi)$ holds by construction.

Then we have the estimate,
$$
\int_0^\infty w(S_t)\,dt= \sum_{u \in V} w(u) \|\psi(u)\|^2,
$$
as well as,
\begin{eqnarray*}
\int_0^\infty  w(E(S_t,\overline{S_t}))\,dt &=&
\sum_{u \sim v} w(u,v) \left|\|\psi(u)\|^2 - \|\psi(v)\|^2\right| \\
&\leq & 
\sum_{u \sim v} w(u,v) \|\psi(u)-\psi(v)\| \cdot \|\psi(u)+\psi(v)\| \\
&\leq &
\sqrt{\sum_{u \sim v} w(u,v) \|\psi(u)-\psi(v)\|^2} \sqrt{\sum_{u \sim v} w(u,v) \|\psi(u)+\psi(v)\|^2} \\
&\leq &
\sqrt{\sum_{u \sim v} w(u,v) \|\psi(u)-\psi(v)\|^2} \sqrt{2 \sum_{u \in V} w(u) \|\psi(u)\|^2}\,.
\end{eqnarray*}
Combining these two inequalities yields,
$$
\frac{\int_0^\infty  w(E(S_t,\overline{S_t}))\,dt}{ \int_0^\infty w(S_t)\,dt} \leq \sqrt{2 \,\mathcal R_G(\psi)},
$$
implying there exists a $t \in [0,\infty]$ for which $S_t$ satisfies the statement
of the lemma.
\end{proof}

\subsection{Random partitions of metric spaces}
\label{sec:rps}

We now discuss some of the theory of random partitions
of metric spaces.
Let $(X,d)$ be a finite metric space.
We use $B(x,R) = \{ y \in X : d(x,y) \leq R \}$ to
denote the closed ball of radius $R$ about $x$.
We will write a partition $P$ of $X$ as a
function $P : X \to 2^X$ mapping a point $x \in X$
to the unique set in $P$ that contains $x$.

For $\Delta > 0$, we say that $P$ is {\em $\Delta$-bounded} if
$\diam(S) \leq \Delta$ for every $S \in P$.
We will also consider distributions over random partitions.
If $\mathcal P$ is a random partition of $X$, we say that $\mathcal P$
is $\Delta$-bounded if this property holds with probability one.

A random partition $\mathcal P$ is {\em $(\Delta, \alpha, \delta)$-padded} if
$\mathcal P$ is $\Delta$-bounded, and
for every $x \in X$, we have
$$
\pr[B(x, \Delta/\alpha) \subseteq \mathcal P(x)] \geq \delta.
$$
A random partition is {\em $(\Delta,L)$-Lipschitz} if $\mathcal P$ is $\Delta$-bounded,
and, for every pair $x,y \in X$, we have
$$
\pr[\mathcal P(x) \neq \mathcal P(y)] \leq L \cdot \frac{d(x,y)}{\Delta}\,.
$$

Here are some results that we will need.
The first theorem is known, more generally, for doubling spaces \cite{GKL03},
but here we only need its application to $\mathbb R^k$.
See also \cite[Lem 3.11]{LN05}.

\begin{theorem}\label{thm:rkpad}
If $X \subseteq \mathbb R^k$, then for every $\Delta > 0$ and $\delta > 0$, $X$ admits a $(\Delta, O(k/\delta), 1-\delta)$-padded random partition.
\end{theorem}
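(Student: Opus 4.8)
The plan is to realize the partition via the ball-growing scheme of Calinescu, Karloff, and Rabani, and to analyze it with the refinement that exploits the bounded number of well-separated points inside a Euclidean ball. (The simpler approach of intersecting $X$ with a randomly shifted grid of cubes of side $\Delta/\sqrt{k}$ also produces a padded partition, but only with $\alpha = O(k^{3/2}/\delta)$, which is not good enough; one genuinely needs the ball-based construction to get $O(k/\delta)$.)

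First I would fix a maximal $(\Delta/16)$-separated subset $Y\subseteq X$; by maximality every point of $X$ lies within $\Delta/16$ of some point of $Y$. The random partition $\mathcal P$ is obtained by drawing a radius $R$ uniformly from $[\Delta/4,\Delta/2]$ and an independent uniformly random permutation $\sigma$ of $Y$, and then assigning each $z\in X$ to the cluster $C_y$, where $y$ is the $\sigma$-earliest element of $\{y'\in Y:\|z-y'\|\le R\}$ (nonempty since $R\ge\Delta/16$). Since $z\in C_y$ forces $\|z-y\|\le R\le\Delta/2$, each cluster has diameter at most $\Delta$, so $\mathcal P$ is $\Delta$-bounded.

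Next, fix $z\in X$ and a radius $r>0$, and order $Y=\{y_1,y_2,\dots\}$ so that $\|z-y_1\|\le\|z-y_2\|\le\cdots$. Call $y_i$ the \emph{deciding center} if it is the $\sigma$-earliest element of $\{y'\in Y:\|z-y'\|\le R+r\}$ (this set always contains the owner of $z$, so a deciding center exists and satisfies $\|z-y_i\|\le R+r$). The routine case analysis of the scheme shows that if the deciding center $y_i$ has $\|z-y_i\|\le R-r$, then for every $w\in B(z,r)$ one has $\|w-y_i\|\le R$ and $y_i$ is $\sigma$-earliest among $w$'s candidates, so $y_i$ captures all of $B(z,r)$ and $B(z,r)\subseteq\mathcal P(z)$. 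Hence $B(z,r)\not\subseteq\mathcal P(z)$ can occur only when the deciding center lies in the shell $\|z-y_i\|\in(R-r,R+r]$. Union-bounding over which $y_i$ is the deciding center, and using that conditioned on $\|z-y_i\|\in(R-r,R+r]$ — an event of $R$-probability at most $2r/(\Delta/4)=8r/\Delta$ — the center $y_i$ must $\sigma$-precede $y_1,\dots,y_{i-1}$ (which are then also within $R+r$ of $z$), an event of probability at most $1/i$ over $\sigma$, we obtain
$$
\pr[B(z,r)\not\subseteq\mathcal P(z)]\;\le\;\frac{8r}{\Delta}\sum_{i\,:\,\Delta/4-r<\|z-y_i\|\le\Delta/2+r}\frac{1}{i}\,.
$$

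The final and key step is to bound this partial harmonic sum by $O(k)$, which is exactly what turns the generic $O(\log|X|)$ CKR estimate into the dimension bound we want. The indices appearing in the sum form a contiguous block $\{i_0,\dots,i_1\}$ in the distance ordering, so the sum is at most $1+\ln i_1$. For $r\le\Delta/8$ (which holds for the value of $r$ chosen below), $i_1$ is at most the number of points of $Y$ within distance $3\Delta/4$ of $z$; since the points of $Y$ are $(\Delta/16)$-separated, disjointness of the radius-$(\Delta/32)$ balls around them together with the Euclidean volume bound gives at most $C^{k}$ such points for an absolute constant $C$, so the sum is $O(k)$ and $\pr[B(z,r)\not\subseteq\mathcal P(z)]\le O(rk/\Delta)$. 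Choosing $r=c\delta\Delta/k$ for a small enough absolute constant $c$ makes this at most $\delta$, and then $\alpha\defeq\Delta/r=O(k/\delta)$ witnesses that $\mathcal P$ is $(\Delta,O(k/\delta),1-\delta)$-padded. The main obstacle is getting the deciding-center case analysis exactly right (so that a cut provably forces the deciding center into the thin shell) and combining it with the packing estimate; the rest is bookkeeping with constants, including checking that the chosen $r$ indeed satisfies $r\le\Delta/8$.
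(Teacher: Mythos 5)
The paper does not give its own proof of Theorem~\ref{thm:rkpad}; it cites the result to \cite{GKL03} and \cite[Lem 3.11]{LN05}. Your argument is correct and is essentially the standard proof from those references: CKR ball-carving restricted to a $(\Delta/16)$-net with a random radius in $[\Delta/4,\Delta/2]$ and a random permutation, with the padding failure probability bounded by a harmonic sum over shell centers whose length is controlled by a Euclidean packing estimate $(i_1\le C^k)$, yielding the $O(k)$ factor in place of the generic $O(\log|X|)$; the deciding-center case analysis, the constants, and the check that $r\le\Delta/8$ all go through.
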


The next result is proved in \cite{CCGGP98}.
See also \cite[Lem 3.16]{LN05}.

\begin{theorem}\label{thm:RkLip}
If $X \subseteq \mathbb R^k$, then for every $\Delta > 0$, $X$ admits a $(\Delta, O(\sqrt{k}))$-Lipschitz random partition.
\end{theorem}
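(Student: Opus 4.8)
The plan is to use the ball-based random partition of Calinescu, Karloff, and Rabani, since the obvious attempt is not good enough: a randomly shifted axis-parallel grid with cells of side $\Delta/\sqrt k$ is $\Delta$-bounded, but a union bound over the $k$ coordinate hyperplanes together with $\|x-y\|_1\le\sqrt k\,\|x-y\|_2$ yields only a $(\Delta,O(k))$-Lipschitz partition; round cells beat cubes here. Concretely, fix a finite $\varepsilon$-net $N$ of a large ball containing $X$, where $\varepsilon$ is much smaller than both $\Delta$ and $\min_{x\ne y\in X}\|x-y\|$, sample a radius $R$ uniformly from $[\Delta/4,\Delta/2]$ and an independent uniformly random total order $\sigma$ of $N$, and assign each $x\in X$ to the cluster ``owned'' by the $\sigma$-minimal center $z\in N$ with $\|x-z\|\le R$. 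Each cluster is contained in $B(z,R)$ with $R\le\Delta/2$, so the partition is $\Delta$-bounded with probability one.

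For the Lipschitz bound, fix $x,y\in X$ and set $t=\|x-y\|$; we may assume $t\lesssim \Delta/\sqrt k$, as otherwise the asserted bound exceeds $1$. One checks that $x$ and $y$ end up in different clusters precisely when the $\sigma$-minimal center lying within distance $R$ of $\{x,y\}$ lies within distance $R$ of exactly one of $x,y$. Conditioning on $R$ and using that the $\sigma$-minimum of a finite set is uniform over that set, this gives
\[
\pr[\mathcal P(x)\ne\mathcal P(y)] \;=\; \E_R\!\left[\frac{\bigl|N\cap\bigl(B(x,R)\,\triangle\,B(y,R)\bigr)\bigr|}{\bigl|N\cap\bigl(B(x,R)\cup B(y,R)\bigr)\bigr|}\right].
\]
Since $\varepsilon$ is tiny relative to $t$ and $\Delta$, both counts agree with the Lebesgue volume of the corresponding region up to a $(1\pm o(1))$ factor, so it suffices to show, uniformly over $R\in[\Delta/4,\Delta/2]$, that $\mathrm{vol}\bigl(B(x,R)\setminus B(y,R)\bigr) = O\bigl(\sqrt k\,\tfrac tR\bigr)\,\mathrm{vol}\bigl(B(x,R)\bigr)$.

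This volume estimate is the crux, and the point is that the ``lune'' $B(x,R)\setminus B(y,R)$ is far thinner than the annulus $B(x,R)\setminus B(x,R-t)$ that crudely contains it — the annulus bound gives only the ratio $O(kt/R)$. Writing $\hat u=(y-x)/t$, a point $w\in B(x,R)$ can lie outside $B(y,R)$ only if $\langle w-x,\hat u\rangle<\tfrac t2-\tfrac{R^2-\|w-x\|^2}{2t}$; equivalently, estimating $\pr_{w\sim\mathrm{Unif}(B(x,R))}\bigl[\|w-y\|>R\bigr]$ using the standard concentration facts that $R-\|w-x\|$ is typically of order $R/k$ and that $\langle w-x,\hat u\rangle/\|w-x\|$ is typically of order $1/\sqrt k$ produces the bound $\Theta(\sqrt k\,t/R)$. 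Substituting back yields $\pr[\mathcal P(x)\ne\mathcal P(y)] = O(\sqrt k)\cdot\|x-y\|/\Delta$, with a constant independent of $\varepsilon$, so any sufficiently fine net works. I expect the spherical/concentration computation of this volume ratio, together with making the ``counts $\approx$ volumes'' step precise for a fixed finite $X$, to be essentially the only work; the rest is routine.
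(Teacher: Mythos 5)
The paper does not prove Theorem \ref{thm:RkLip}; it cites \cite{CCGGP98} (see also \cite{LN05}). Your reconstruction via ball carving with a random radius and random priority order is exactly the construction of the cited reference, and it is also the construction the paper itself instantiates in Section \ref{sec:algorithm} (there with i.i.d.\ Lebesgue-random centers and a fixed radius). The key estimate you identify, namely
$\mathrm{vol}\bigl(B(x,R)\setminus B(y,R)\bigr) = \Theta\bigl(\sqrt{k}\,\tfrac{t}{R}\bigr)\mathrm{vol}\bigl(B(x,R)\bigr)$
for $t=\|x-y\| \lesssim R/\sqrt{k}$, is correct and is indeed the crux, and your reduction of the separation probability to the volume ratio of the symmetric difference to the union is right.

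Two things worth tightening. First, a generic $\varepsilon$-net has no reason to satisfy $|N\cap S|\approx \mathrm{vol}(S)/\varepsilon^k$ uniformly over regions $S$, so ``counts $\approx$ volumes'' as stated would fail for a bad net; you should take $N$ to be an $\varepsilon$-grid, or (cleaner, and what the paper's algorithm does) replace the net-plus-random-order by an i.i.d.\ sequence of Lebesgue-uniform centers, whereupon $\pr[\mathcal P(x)\neq\mathcal P(y)] = \mathrm{vol}(B(x,R)\triangle B(y,R))/\mathrm{vol}(B(x,R)\cup B(y,R))$ holds exactly and the net disappears. Second, your concentration heuristic for the lune volume is looser than it needs to be (the event in question is driven by the tail where $\|w-x\|$ is near $R$, so ``typical'' values of $R-\|w-x\|$ and $\langle w-x,\hat u\rangle$ do not immediately give the bound); the clean derivation is by reflecting across the bisecting hyperplane to get
$\mathrm{vol}\bigl(B(x,R)\setminus B(y,R)\bigr) = 2\,\mathrm{vol}\bigl(\{w\in B(x,R): 0\le\langle w-x,\hat u\rangle < t/2\}\bigr),$
which reduces to the one-dimensional marginal of a single coordinate of $\mathrm{Unif}(B(x,R))$, whose density near the origin is $\Theta(\sqrt{k}/R)$. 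Also, randomizing $R$ over $[\Delta/4,\Delta/2]$ is unnecessary for the Lipschitz property; a fixed $R=\Delta/2$ already yields both the $\Delta$-bound and the $O(\sqrt{k})$ separation estimate. None of these affect the correctness of the approach.
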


A partitioning theorem for excluded-minor graphs is presented in \cite{KPR93}, with
an improved quantitative dependence coming from \cite{FT03}.

\begin{theorem}\label{thm:planarpad}
If $X$ is the shortest-path metric on a graph excluding $K_h$ as a minor, then for every $\Delta > 0$ and $\delta > 0$,
$X$ admits a $(\Delta, O(h^2/\delta),1-\delta)$-padded random partition and a $(\Delta, O(h^2))$-Lipschitz
random partition.
\end{theorem}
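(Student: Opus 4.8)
\medskip
\noindent\textbf{Proof proposal.} The two conclusions are quoted from \cite{KPR93,FT03}, so I will sketch how one reconstructs the Klein--Plotkin--Rao decomposition together with the constant improvement of Fakcharoenphol--Talwar. The plan is to produce, for each scale $\Delta>0$, a \emph{single} randomized partition that is $\Delta$-bounded and obeys both a Lipschitz-type and a padding-type local estimate; reading off the parameters then yields both statements at once. We may assume the underlying weighted graph $G$ is connected (otherwise run the procedure on each component and take the common refinement) and that every edge has length at most $\tau$ (subdivide longer edges; this perturbs the metric negligibly).

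\medskip
The building block is a single \emph{chop} at width $\tau>0$ applied to a connected induced subgraph $C\subseteq G$: pick an arbitrary root $r\in C$, sample $\beta$ uniformly from $[0,\tau)$, group the vertices of $C$ by the value $\bigl\lfloor (d_G(v,r)+\beta)/\tau\bigr\rfloor$, and split each group into its connected components in $G[C]$. Three properties follow directly: (i) for an edge $u\sim v$ inside $C$, its endpoints are separated with probability at most $d_G(u,v)/\tau$, since $|d_G(u,r)-d_G(v,r)|\le d_G(u,v)$ and endpoints landing in the same group also land in the same component; (ii) if $B_{d_G}(x,\rho)\subseteq C$ with $0<\rho<\tau$, then with probability at least $1-2\rho/\tau$ the whole ball falls in one output part (it induces a connected subgraph, so it is broken only when it straddles a group boundary); (iii) every output part lies in a single shell $\{v: d_G(v,r)\in[a,a+\tau)\}$.

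\medskip
Now fix $\tau\asymp\Delta/h$ with the implied constant chosen below, and build $\mathcal P$ in $h$ rounds: round $1$ chops $G$; round $i+1$ chops, with fresh independent randomness, each cluster produced by round $i$. By (i) and a union bound over rounds, adjacent vertices are separated with probability $\le h\,d_G(u,v)/\tau\asymp h^2\,d_G(u,v)/\Delta$; extending along a shortest path gives the $(\Delta,O(h^2))$-Lipschitz bound for all pairs. Maintaining the invariant $B_{d_G}(x,\rho)\subseteq\mathcal P^{(i)}(x)$ and invoking (ii) at each round, the ball escapes its cluster over the $h$ rounds with probability $\le 2h\rho/\tau$; choosing $\rho=\Delta/\alpha$ with $\alpha=O(h^2/\delta)$ makes this at most $\delta$, which is the $(\Delta,O(h^2/\delta),1-\delta)$-padding guarantee. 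This accounting is routine.

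\medskip
The one genuinely nontrivial point --- and the only place the excluded-minor hypothesis enters --- is that $\mathcal P$ really is $\Delta$-bounded: for a suitable constant in $\tau\asymp\Delta/h$, every cluster $S$ surviving all $h$ rounds has $\diam(S,d_G)\le\Delta$. This is the Klein--Plotkin--Rao structural lemma, and I expect it to be the main obstacle. The argument is by contradiction: if a surviving cluster $C$ contained $x,y$ with $d_G(x,y)$ much larger than $h\tau$, consider the nested chain $C=C^{(h)}\subseteq C^{(h-1)}\subseteq\cdots\subseteq C^{(1)}\subseteq G$ of clusters containing $C$, where $r_i\in C^{(i-1)}$ is the round-$i$ root and, by (iii), $C^{(i)}$ is a connected component of a width-$\tau$ shell of $C^{(i-1)}$ around $r_i$. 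Using geodesics between $r_1,\dots,r_h$ inside the appropriate clusters, one carves out $h$ pairwise-disjoint, connected, pairwise-adjacent branch sets (the $i$-th living in $C^{(i-1)}$, passing through $r_i$, and reaching the inner region $C^{(h-1)}$ that all later branch sets occupy), producing a $K_h$ minor --- a contradiction. The delicate part, and the reason one obtains $O(h^2)$ rather than the original $O(h^3)$ (following Fakcharoenphol--Talwar), is to run the chops with cluster-internal distances and re-root carefully so that successive shells nest cleanly and the $h$ branch sets fit within diameter $O(h\tau)$.
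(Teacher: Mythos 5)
The paper does not prove this theorem; it states it as a black-box citation to \cite{KPR93} and \cite{FT03}, so there is no ``paper's proof'' to compare against. Your reconstruction correctly identifies the source and follows the standard Klein--Plotkin--Rao / Fakcharoenphol--Talwar blueprint: iterated randomly-shifted width-$\tau$ BFS chops (with splitting into connected components), $h$ rounds at $\tau\asymp\Delta/h$, per-round cut probability $d(u,v)/\tau$ and per-round padding failure $2\rho/\tau$, union-bounded over rounds. That accounting is right and gives the claimed $O(h^2)$ Lipschitz constant and $O(h^2/\delta)$ padding ratio \emph{provided} the resulting clusters have diameter $O(\Delta)$.

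That proviso is the whole theorem, and your proof of it is a genuine gap. What you call ``the main obstacle'' --- that $h$ rounds of $\tau$-chops on a $K_h$-minor-free graph leave clusters of weak diameter $O(h\tau)$ --- is exactly the content of KPR/FT, and your sketch does not establish it. Saying ``one carves out $h$ pairwise-disjoint, connected, pairwise-adjacent branch sets'' is a restatement of the goal, not a construction: it is not explained how the branch sets are kept disjoint (in your description several of them are supposed to ``reach the inner region $C^{(h-1)}$'', so they would collide there), nor how pairwise adjacency is arranged, and these are precisely the places where the KPR argument is delicate (indeed the original proof required a correction). There is also an internal inconsistency to resolve: your chop is defined via the global metric $d_G$, yet you then say at the end that one must ``run the chops with cluster-internal distances and re-root carefully,'' and the structure lemma really does depend on this choice. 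As written, the proposal is a faithful high-level outline with correct parameter bookkeeping but no proof of the $\Delta$-boundedness, which is the entire mathematical content being cited.
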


Finally, for the special case of bounded-genus graphs, a better bound is known \cite{LS10}.

\begin{theorem}\label{thm:genuspad}
If $X$ is the shortest-path metric on a graph of genus $g$, for every $\Delta > 0$ and $\delta > 0$, $X$ admits a $(\Delta, O((\log g)/\delta), 1-\delta)$-padded random partition, and a
$(\Delta, O(\log g))$-Lipschitz random partition.
\end{theorem}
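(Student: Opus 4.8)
A graph of (Euler) genus $g$ excludes $K_h$ as a minor for some $h=O(\sqrt{g})$, so Theorem~\ref{thm:planarpad} already gives a $(\Delta,O(g/\delta),1-\delta)$-padded and a $(\Delta,O(g))$-Lipschitz random partition. The real content of Theorem~\ref{thm:genuspad} is the exponential improvement to $O(\log g)$, which cannot come from the minor structure alone and must use the surface embedding. The plan is a recursion on the genus, arranged so that its \emph{depth}, rather than $g$ itself, controls the loss.

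I would first record two reductions. (i) It suffices to produce, for each fixed scale $\Delta$, a $(\Delta,O(\log g))$-Lipschitz random partition: the padded statement with its $1/\delta$ trade-off then follows by a standard argument (refining into scale-$\delta\Delta$ sub-pieces plus a union bound, or the generic Lipschitz-to-padded conversions of \cite{CCGGP98,LN05}), and one may fix a single scale since random partitions compose across scales with only a constant-factor loss. (ii) Cutting the surface along a simple cycle $C$ yields a graph of strictly smaller Euler genus whose induced shortest-path metric dominates the original and agrees with it away from $C$; hence a partition of the cut graph pulls back to $G$, with the only ``damage'' localized near $C$.

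The heart of the matter is a dichotomy at scale $\Delta$. Either (a) every non-contractible closed walk in $G$ has length $\gtrsim \Delta$ --- so $G$ is ``locally planar at scale $\Delta$'', radius-$\Delta/2$ balls lift to a planar cover, and a planar ($K_5$-minor-free, hence $O(1)$-Lipschitz by Theorem~\ref{thm:planarpad}) partition transfers --- or (b) $G$ contains a short non-contractible cycle. In case (b) one cuts the surface along such a cycle (ideally a short \emph{balanced} separating cycle, splitting the residual genus roughly in half), pays $O(1)$ in the Lipschitz constant to reconcile the two partitions across the short new boundary, and recurses on the pieces. The key quantitative point is that this recursion can be organized to reach planar pieces within $O(\log g)$ levels; since the Lipschitz losses contributed to a fixed vertex by distinct recursive steps accumulate additively across levels, the total is $O(\log g)$. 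Reduction (i) then delivers both the padded partition and the $1/\delta$ dependence.

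The hard part will be the quantitative claim that $O(\log g)$ levels suffice. Naively, cutting one non-separating short cycle lowers the genus by only one, which would give depth $O(g)$; obtaining $O(\log g)$ requires a genuinely balanced genus-reducing operation at every step --- a short separating cycle that halves the residual genus, or an amortized potential argument in its place --- and it is not obvious that the existence of some short non-contractible cycle always supplies such balance. This, together with the surgical bookkeeping that controls how cut-and-reglue perturbs the partition near each short cut curve (so the per-level loss stays $O(1)$), is where essentially all the work lies; the crude $O(g)$ bound, by contrast, is immediate via the minor bound of Theorem~\ref{thm:planarpad}.
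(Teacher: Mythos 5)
The paper itself does not prove Theorem~\ref{thm:genuspad}; it states it as an imported result and attributes it to \cite{LS10}, so there is no in-paper argument to compare against. Your sketch is in the right spirit --- the literature's proof does involve surgery along short non-contractible curves and ultimately a reduction to the planar/minor-free case --- but the gap you flag at the end is exactly where the theorem lives, and you have not closed it.

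Cutting along a shortest non-contractible cycle reduces the (orientable) genus by one when the cycle is non-separating, which is the generic case, so the naive recursion has depth $\Theta(g)$, not $O(\log g)$. There is no a priori reason for a short non-contractible cycle to separate the surface into two pieces of comparable genus, and on a surface that looks like a long chain of handles one should not expect such balance. The known route to $O(\log g)$ is not a balanced divide-and-conquer on the genus but a repackaging of the surgery as a \emph{stochastic planarization}: \cite{LS10} (together with follow-up work of Sidiropoulos) shows that the shortest-path metric of a genus-$g$ graph admits a random embedding into a distribution over planar graph metrics that dominates the original distances and has expected expansion $O(\log g)$; the $\log g$ comes from an averaging/amortization over the peeled handles, not from halving $g$ at each step. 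Once this is available, one runs the planar random partition of Theorem~\ref{thm:planarpad} on the random planar image and pulls it back, and both the padded and Lipschitz bounds transfer with only the $O(\log g)$ loss. Your preliminary reductions (i) and (ii) are reasonable for the concrete CKR/KPR-style schemes in play, and the local-planarity dichotomy in case (a) is sound; what is missing is any correct mechanism delivering the $O(\log g)$. Without that, your argument only recovers the $O(h^2) = O(g)$ bound that Theorem~\ref{thm:planarpad} already gives via $K_{O(\sqrt{g})}$-minor-freeness, which you yourself noted is the trivial baseline.
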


\section{Localizing eigenfunctions}

Let $G=(V,E,w)$ be a weighted graph.
In the present section, we show how to find,
for every $k \in \mathbb N$, disjointly supported functions $\psi_1, \psi_2, \ldots, \psi_k : V \to \mathbb R$
with $\mathcal R_G(\psi_i) \leq k^{O(1)} {\lambda_k}$, where $\lambda_k$
is the $k$th smallest eigenvalue of $\mathcal L_G$.

\subsection{The radial projection distance}

For $h \in \mathbb N$, consider a mapping $F : V \to \mathbb R^h$.
A central role will
be played by the {\em radial projection distance}, which is an extended pseudo-metric on $V$:  If $\|F(u)\|, \|F(v)\| > 0$, then
$$d_F(u,v) \defeq \left\|\frac{F(u)}{\|F(u)\|} - \frac{F(v)}{\|F(v)\|}\right\|\,.$$
Otherwise, if $F(u)=F(v)=0$, we put $d_F(u,v)\defeq 0$, else $d_F(u,v)\defeq \infty$.

In order to find many disjointly supported functions from a geometric
representation $F : V \to \mathbb R^h$, it should
be that the $\ell^2$ mass of $F$ is not too concentrated.
To this end, we say that $F$ is {\em $(\Delta ,\eta)$-spreading (with respect to $G$)} if,
for all subsets $S \subseteq V$, we have
$$
\diam(S, d_F) \leq \Delta \implies \sum_{u \in S} w(u) \|F(u)\|^2 \leq \eta \sum_{u \in V} w(u) \|F(u)\|^2\,.
$$

First, we record the following simple fact.

\begin{lemma}\label{lem:dF}
For any $F : V \to \mathbb R^h$,
and for all $u, v \in V$, we have $d_F(u,v) \|F(u)\| \leq 2\, \|F(u)-F(v)\|$.
\end{lemma}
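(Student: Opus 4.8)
The plan is to reduce the statement to an elementary fact about two nonzero vectors in $\mathbb{R}^h$. First I would dispose of the degenerate cases: if $F(u)=F(v)=0$ then $d_F(u,v)=0$ and both sides vanish; if $F(u)=0\neq F(v)$ then the left-hand side is $0$ as well (reading $0\cdot\infty$ as $0$, consistent with the convention in the definition of $d_F$); and the lemma is only applied when $F(u)\neq 0$ and $F(v)\neq 0$. So it suffices to treat the main case $\|F(u)\|,\|F(v)\|>0$, and I would write $a=F(u)$, $b=F(v)$. Multiplying the target inequality through by $\|a\|=\|F(u)\|$, the claim becomes
$$
\left\| a - \frac{\|a\|}{\|b\|}\, b\right\| \;\le\; 2\,\|a-b\|\,,
$$
since $\|a\|\, d_F(u,v) = \bigl\| a - \tfrac{\|a\|}{\|b\|} b\bigr\|$ by definition of $d_F$.

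The one algebraic step is the decomposition
$$
a - \frac{\|a\|}{\|b\|}\, b \;=\; (a-b) \;+\; \frac{b}{\|b\|}\bigl(\|b\|-\|a\|\bigr)\,,
$$
which I would verify by direct expansion. Applying the triangle inequality and using $\|b/\|b\|\|=1$ gives $\bigl\| a - \tfrac{\|a\|}{\|b\|} b\bigr\| \le \|a-b\| + \bigl|\,\|a\|-\|b\|\,\bigr|$, and then the reverse triangle inequality $\bigl|\,\|a\|-\|b\|\,\bigr|\le\|a-b\|$ bounds the second term by $\|a-b\|$, producing the factor of $2$.

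I do not anticipate any real obstacle here: the content is a single application of the triangle inequality together with its reverse form, and the only point requiring a little care is the bookkeeping with the $\infty$/$0$ conventions for $d_F$ in the degenerate cases (which, in any event, do not arise in the applications of the lemma).
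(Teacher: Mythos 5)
Your proof is correct and uses essentially the same argument as the paper: the decomposition $a - \tfrac{\|a\|}{\|b\|}b = (a-b) + \tfrac{b}{\|b\|}(\|b\|-\|a\|)$ is exactly the paper's identity $x - \tfrac{\|x\|}{\|y\|}y = (x-y) + (y - \tfrac{\|x\|}{\|y\|}y)$, followed by the triangle and reverse triangle inequalities. The only difference is that you also spell out the degenerate cases with $F(u)=0$ or $F(v)=0$, which the paper leaves implicit.
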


\begin{proof}
For any non-zero vectors $x,y \in \mathbb R^k$, we have
$$
\|x\| \left\|\frac{x}{\|x\|} - \frac{y}{\|y\|}\right\|
=
\left\|x - \frac{\|x\|}{\|y\|} y\right\|
\leq
\|x-y\| + \left\|y - \frac{\|x\|}{\|y\|} y\right\|
\leq 2 \,\|x-y\|\,.
$$
\end{proof}

We now show that systems of $\ell^2(V,w)$-orthonormal functions give rise to spreading maps.

\begin{lemma}\label{lem:spread}
Suppose that $f_1, f_2, \ldots, f_k : V \to \mathbb R$ is an $\ell^2(V,w)$-orthonormal system and
that $F : V \to \mathbb R^k$ is given by
$F(v) = (f_1(v), f_2(v), \ldots, f_k(v))$.
Then, for every $\Delta > 0$, $F$ is $\left(\Delta, \frac{1}{k (1-\Delta^2)}\right)$-spreading
with respect to $G$.
\end{lemma}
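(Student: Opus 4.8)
The plan is to extract from $\ell^2(V,w)$-orthonormality the two identities that drive everything. First I would record the \emph{isotropy} property: for any unit vector $x\in\mathbb R^k$,
\[
\sum_{v\in V} w(v)\,\langle x, F(v)\rangle^2 \;=\; \sum_{i,j=1}^k x_i x_j \sum_{v\in V} w(v) f_i(v) f_j(v) \;=\; \sum_{i=1}^k x_i^2 \;=\; 1,
\]
using $\langle f_i,f_j\rangle_{\ell^2(V,w)}=\delta_{ij}$; and likewise $\sum_{v\in V} w(v)\|F(v)\|^2 = \sum_{i=1}^k \|f_i\|_{\ell^2(V,w)}^2 = k$. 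So with $\eta = \frac{1}{k(1-\Delta^2)}$, the target inequality $\sum_{u\in S} w(u)\|F(u)\|^2 \le \eta \sum_{v\in V} w(v)\|F(v)\|^2$ reduces to showing $\diam(S,d_F)\le \Delta$ implies $\sum_{u\in S} w(u)\|F(u)\|^2 \le \frac{1}{1-\Delta^2}$.

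Next I would dispose of the trivial cases. If $\Delta\ge 1$ there is nothing to prove, so assume $0<\Delta<1$; and if $F(u)=0$ for all $u\in S$ the left-hand side vanishes. Otherwise fix $u_0\in S$ with $\|F(u_0)\|>0$ and set $x \defeq F(u_0)/\|F(u_0)\| \in \mathbb R^k$, a unit vector. Since $d_F(u_0,u)\le \diam(S,d_F)\le \Delta<\infty$ for every $u\in S$, and $d_F$ is $\infty$ whenever exactly one of $F(u_0),F(u)$ vanishes, every $u\in S$ has $F(u)\ne 0$, so the normalized vector $\hat F(u) \defeq F(u)/\|F(u)\|$ is well defined and $d_F(u_0,u) = \|x - \hat F(u)\|$.

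For the core estimate: for unit vectors $a,b$ one has $\|a-b\|^2 = 2 - 2\langle a,b\rangle$, so for each $u\in S$,
\[
\langle x, \hat F(u)\rangle \;=\; 1 - \tfrac12 \, d_F(u_0,u)^2 \;\ge\; 1 - \tfrac{\Delta^2}{2} \;>\; 0,
\]
hence $\langle x, F(u)\rangle^2 \ge (1-\tfrac{\Delta^2}{2})^2 \|F(u)\|^2$. Summing over $u\in S$, enlarging the sum to all of $V$, and invoking isotropy gives
\[
\Big(1-\tfrac{\Delta^2}{2}\Big)^2 \sum_{u\in S} w(u)\|F(u)\|^2 \;\le\; \sum_{u\in S} w(u)\,\langle x, F(u)\rangle^2 \;\le\; \sum_{v\in V} w(v)\,\langle x, F(v)\rangle^2 \;=\; 1.
\]
I would finish with the elementary bound $(1-\tfrac{\Delta^2}{2})^2 = 1 - \Delta^2 + \tfrac{\Delta^4}{4} \ge 1 - \Delta^2$, which yields $\sum_{u\in S} w(u)\|F(u)\|^2 \le \frac{1}{1-\Delta^2} = \frac{1}{k(1-\Delta^2)}\sum_{v\in V}w(v)\|F(v)\|^2$.

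Honestly, there is no serious obstacle here: the statement is a short computation. The only two points needing a modicum of care are (a) picking the reference direction $x$ to be the direction of an actual point of $S$, so that the $d_F$-diameter bound translates into an inner-product lower bound, and (b) the observation that a set of finite $d_F$-diameter is either entirely mapped to $0$ or entirely mapped away from $0$, which is what legitimizes the normalization $\hat F(u)$; everything else is bookkeeping plus the one-line inequality $(1-\Delta^2/2)^2 \ge 1-\Delta^2$.
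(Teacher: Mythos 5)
Your proof is correct and follows essentially the same route as the paper: establish the isotropy identity $\sum_v w(v)\langle x, F(v)\rangle^2 = 1$ for unit $x$, take $x$ to be the normalized image of a point of $S$, convert the $d_F$-diameter bound into the inner-product lower bound $\langle x, \hat F(u)\rangle \ge 1-\Delta^2/2$ via the identity $\|a-b\|^2 = 2-2\langle a,b\rangle$ for unit vectors, and then use $(1-\Delta^2/2)^2 \ge 1-\Delta^2$ together with $\sum_v w(v)\|F(v)\|^2 = k$. The only (harmless) differences are cosmetic: the paper packages the isotropy computation via the operator $U$ with $U^TU = I$, and leaves the edge cases (e.g.\ $\Delta \ge 1$, or $F$ vanishing on $S$) implicit, whereas you dispose of them explicitly.
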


\begin{proof}

Let $x \in \mathbb R^k$ be any unit vector, and
let $U : \mathbb R^k \to \ell^2(V,w)$ be defined by
$$(Ux)(v) \defeq \sum_{i=1}^k x_i \sqrt{w(v)} f_i(v)\,.$$
Observe that $(U^T U)_{i,j} = \langle f_i, f_j\rangle_{\ell^2(V,w)}$,
hence $U^TU = I$.  Thus,
\begin{equation}\label{eq:proj}
\sum_{v \in V} w(v) \langle x, F(v) \rangle^2
= \langle U x, U x\rangle = \langle x, U^T U x\rangle = 1.
\end{equation}

Now, let $S \subseteq V$ satisfy $\diam(S,d_F) \leq \Delta$.
Fix any $u \in S$ and use \eqref{eq:proj} to write,
$$
1 = \sum_{v \in V} w(v) \left\langle F(v), \frac{F(u)}{\|F(u)\|}\right\rangle^2
= \sum_{v \in V} w(v) \|F(v)\|^2 \left(1-\frac{d_F(u,v)^2}{2}\right)^2 \geq (1-\Delta^2) \sum_{v \in S} w(v) \|F(v)\|^2\,.
$$
The lemma now follows by noting that, $$\sum_{v \in V} w(v) \|F(v)\|^2 = \sum_{v \in V} \sum_{i=1}^k w(v) f_i(v)^2 = \sum_{v \in V} \sum_{i=1}^k \|f_i\|_{\ell^2(V,w)}^2 = k\,.$$
\end{proof}

\subsection{Smooth localization}
\label{sec:smoothlocal}

Given a map $F :V \to \mathbb R^h$
and a subset $S \subseteq V$, we now show how
to construct a function supported on a small-neighborhood $S$,
which retains the $\ell^2$ mass of $F$ on $S$, and which doesn't
stretch edges by too much.

For future applications,
it will be useful to consider the largest metric on $G$ which
agrees with $d_F$ on edges.  This is the induced shortest-path (extended pesudo-) metric on $G$,
where the length of an edge $\{u,v\} \in E$ is given by $d_F(u,v)$.
We will use the notation $\hat d_F$ for this metric.
Observe that $\hat d_F \geq d_F$ since $d_F$ is a pseudo-metric.
We will write $$N_{\e}(S,\hat d_F) \defeq \{ v \in V : \hat d_F(v,S) < \e \}$$ for the open $\e$-neighborhood of $S$
in the metric $\hat d_F$.

\begin{lemma}[Localization]\label{lem:bump}
For any $F : V \to \mathbb R^h$, the following holds.
For every subset $S \subseteq V$ and number $\e > 0$, there exists a mapping $\psi : V \to \mathbb R^h$ which
satisfies the following three properties:
\begin{enumerate}
\item $\psi|_S = F|_S$,
\item $\supp(\psi) \subseteq N_{\e}(S, \hat d_F)$, and
\item if $\{u,v\} \in E$, then $|\psi(u)-\psi(v)| \leq (1+\frac{2}{\e}) \|F(u)-F(v)\|$.
\end{enumerate}
\end{lemma}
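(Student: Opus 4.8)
The plan is to take $\psi$ to be $F$ multiplied by a ``tent'' cutoff that equals $1$ on $S$ and decays linearly to $0$ across the $\hat d_F$-neighborhood of radius $\e$ around $S$. Concretely, I would set
$$
\rho(v) \defeq \max\left(0,\ 1 - \tfrac{1}{\e}\,\hat d_F(v,S)\right), \qquad \psi(v) \defeq \rho(v)\, F(v)\,,
$$
with the harmless convention that $\hat d_F(v,S) = \infty$ forces $\rho(v) = 0$.

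Properties (i) and (ii) are then immediate. For (i): if $v \in S$ then $\hat d_F(v,S)=0$, so $\rho(v)=1$ and $\psi(v)=F(v)$. For (ii): if $v \notin N_\e(S,\hat d_F)$ then $\hat d_F(v,S)\ge\e$, so $\rho(v)=0$ and $\psi(v)=0$; hence $\supp(\psi)\subseteq N_\e(S,\hat d_F)$. The real content is (iii). Fix an edge $\{u,v\}\in E$ and use the telescoping identity $\psi(u)-\psi(v) = \rho(u)\,(F(u)-F(v)) + (\rho(u)-\rho(v))\,F(v)$ together with $\rho(u)\le 1$ to get
$$
\|\psi(u)-\psi(v)\| \;\le\; \|F(u)-F(v)\| + |\rho(u)-\rho(v)|\cdot\|F(v)\|\,.
$$
Since $\rho$ is $\tfrac1\e$-Lipschitz in $\hat d_F(\cdot,S)$ and distance-to-a-set is $1$-Lipschitz, $|\rho(u)-\rho(v)| \le \tfrac1\e\,\hat d_F(u,v) \le \tfrac1\e\, d_F(u,v)$, the last step because the edge $\{u,v\}$ has length $d_F(u,v)$ in $\hat d_F$. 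When $F(u),F(v)$ are both nonzero, Lemma~\ref{lem:dF} applied with $u,v$ swapped gives $d_F(u,v)\,\|F(v)\| \le 2\,\|F(u)-F(v)\|$, so $\|\psi(u)-\psi(v)\| \le (1+\tfrac2\e)\,\|F(u)-F(v)\|$, which is exactly (iii).

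The one point that requires care — and where the bookkeeping concentrates — is the degenerate case $d_F(u,v)=\infty$, which on an edge happens precisely when exactly one of $F(u),F(v)$ is zero (if both are zero then $\psi(u)=\psi(v)=0$ and there is nothing to check). Say $F(u)=0\ne F(v)$. Then $\psi(u)=\rho(u)\cdot 0 = 0$, so $\|\psi(u)-\psi(v)\| = \rho(v)\|F(v)\| \le \|F(v)\| = \|F(u)-F(v)\| \le (1+\tfrac2\e)\,\|F(u)-F(v)\|$, and (iii) holds in this case as well. I would also verify the trivial consistency conventions (e.g.\ $\hat d_F(u,v)=d_F(u,v)$ on edges even when this common value is $\infty$, and $|\rho(u)-\rho(v)| \le \tfrac1\e \hat d_F(u,v)$ even when a distance is infinite), but these are routine; the essential ingredients are the tent-function construction and Lemma~\ref{lem:dF}.
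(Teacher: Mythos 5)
Your construction and argument match the paper's proof essentially exactly: you use the same tent cutoff $\max(0,\,1-\hat d_F(\cdot,S)/\e)$, multiply by $F$, and control the edge stretch via the Lipschitz property of the cutoff together with Lemma~\ref{lem:dF}; the only difference is a cosmetically symmetric choice in the telescoping (you keep $\rho(u)$ and $F(v)$, the paper keeps $\theta(v)$ and $F(u)$). Your extra care with the degenerate case $d_F(u,v)=\infty$ is a welcome addition but doesn't change the route.
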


\begin{proof}
First, define
$$
\theta(v) \defeq \max\left(0,1-\frac{\hat d_F(v,S)}{\e}\right)\,.
$$
In particular, observe that $\theta$ is $(1/\e)$-Lipschitz with respect to $\hat d_F$, so since $\hat d_F$ and $d_F$ agree on edges,
we have for every $\{u,v\} \in E$,
\begin{equation}\label{eq:dhatlip}
|\theta(u)-\theta(v)| \leq \frac{1}{\e} \,d_F(u,v)\,.
\end{equation}
Finally, set $\psi(v) \defeq \theta(v) F(v)$.

\medskip

Properties (i) and (ii) are immediate from the definition, thus we turn to property (iii).
Fix $\{u,v\} \in E$.
We have,
\begin{eqnarray*}
|\psi(u)-\psi(v)| &=& \left|\theta(u) F(u) - \theta(v) F(v)\vphantom{\bigoplus} \right| \\
&\leq & |\theta(v)| \cdot \left\|F(u) - F(v) \right\| + \|F(u)\| \cdot |\theta(u)-\theta(v)|\,.
\end{eqnarray*}
Since $\theta \leq 1$, the first term is at most $\|F(u)-F(v)\|$.  Now, using \eqref{eq:dhatlip}, and Lemma \ref{lem:dF}, we have
$$
  \|F(u)\| \cdot |\theta(u)-\theta(v)| \leq
\frac{1}{\e} \cdot \|F(u)\| \cdot d_F(u,v)
 \leq \frac{2}{\e}\cdot \|F(u)-F(v)\|,
$$
completing the proof of (iii).
\end{proof}

The preceding construction reduces the problem
of finding disjointly supported set functions
to finding separated regions in $(V,\hat d_F)$, each
of which contains a large fraction of the $\ell^2$ mass of $F$.

\begin{lemma}\label{lem:manybumps}
Let $F : V \to \mathbb R^h$ be given, and
suppose that for some $\beta,\delta > 0$ and $r \in \mathbb N$, there exist
$r$ disjoint subsets $T_1, T_2, \ldots, T_r \subseteq V$ such that
$\hat d_F(T_i, T_j) \geq \beta$ for $i \neq j$, and for every $i=1,2,\ldots, r$, we have
\begin{equation}\label{eq:mass}
\sum_{v \in T_i} w(v) \|F(v)\|^2 \geq \delta \sum_{v \in V} w(v) \|F(v)\|^2\,.
\end{equation}
Then there exist disjointly supported functions $\psi_1, \psi_2, \ldots, \psi_r : V \to \mathbb R$ such that
for $i=1,2,\ldots, r$, we have
\begin{equation}\label{eq:averaging}
\mathcal R_G(\psi_i) \leq \frac{2}{\delta(r-i+1)} \left(1+\frac{4}{\beta}\right)^2 \mathcal R_G(F)\,.
\end{equation}
\end{lemma}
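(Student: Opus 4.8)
The plan is to manufacture the functions $\psi_i$ by applying the Localization Lemma (Lemma \ref{lem:bump}) to each set $T_i$ with neighborhood radius $\e = \beta/2$, and then to extract the per-index bound \eqref{eq:averaging} by a sorting-plus-averaging argument. First I would set, for each $i$, $\psi_i' := $ the map produced by Lemma \ref{lem:bump} applied to $S = T_i$ and $\e = \beta/2$; thus $\psi_i'|_{T_i} = F|_{T_i}$, $\supp(\psi_i') \subseteq N_{\beta/2}(T_i, \hat d_F)$, and $|\psi_i'(u)-\psi_i'(v)| \le (1+\tfrac{4}{\beta})\,\|F(u)-F(v)\|$ for every $\{u,v\} \in E$. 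The hypothesis $\hat d_F(T_i,T_j) \ge \beta$ for $i \ne j$ forces the neighborhoods $N_{\beta/2}(T_i,\hat d_F)$ to be pairwise disjoint (a vertex in two of them would, by the triangle inequality for $\hat d_F$, witness $\hat d_F(T_i,T_j) < \beta$), so the $\psi_i'$ are disjointly supported and, crucially, every vertex lies in at most one of these neighborhoods.

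Next I would bound the pieces of each Rayleigh quotient. For the denominator, $\psi_i'$ agreeing with $F$ on $T_i$ together with \eqref{eq:mass} gives $\sum_{v} w(v)\|\psi_i'(v)\|^2 \ge \delta M$, where $M := \sum_{v \in V} w(v)\|F(v)\|^2$. For the numerator, an edge contributes to $\sum_{u \sim v} w(u,v)\|\psi_i'(u)-\psi_i'(v)\|^2$ only if it has an endpoint in $N_{\beta/2}(T_i,\hat d_F)$, and property (iii) of Lemma \ref{lem:bump} bounds such a contribution by $(1+\tfrac{4}{\beta})^2\, w(u,v)\|F(u)-F(v)\|^2$. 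Since each vertex belongs to at most one neighborhood, each edge is charged to at most two indices, so summing over $i$,
\[
\sum_{i=1}^r \sum_{u \sim v} w(u,v)\|\psi_i'(u)-\psi_i'(v)\|^2 \;\le\; 2\left(1+\tfrac{4}{\beta}\right)^2 \sum_{u \sim v} w(u,v)\|F(u)-F(v)\|^2 \;=\; 2\left(1+\tfrac{4}{\beta}\right)^2 M\,\mathcal R_G(F).
\]
Dividing by the denominator bound $\delta M$ yields $\sum_{i=1}^r \mathcal R_G(\psi_i') \le \tfrac{2}{\delta}(1+\tfrac{4}{\beta})^2\,\mathcal R_G(F)$.

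Finally I would relabel the $\psi_i'$ so that $\mathcal R_G(\psi_1') \le \cdots \le \mathcal R_G(\psi_r')$; then $(r-i+1)\,\mathcal R_G(\psi_i') \le \sum_{j \ge i} \mathcal R_G(\psi_j') \le \tfrac{2}{\delta}(1+\tfrac{4}{\beta})^2 \mathcal R_G(F)$, which is exactly \eqref{eq:averaging} for the $\mathbb{R}^h$-valued $\psi_i'$. To land in $V \to \mathbb{R}$, I would replace each $\psi_i'$ by one of its coordinates $\psi_i(v) := \psi_i'(v)_{j(i)}$ chosen (via the mediant inequality applied to the coordinatewise decomposition of the numerator and denominator of $\mathcal R_G(\psi_i')$) so that $\mathcal R_G(\psi_i) \le \mathcal R_G(\psi_i')$; since $\supp(\psi_i) \subseteq \supp(\psi_i')$, disjoint support is preserved. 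The step I expect to be the only real content is the aggregation in the second paragraph: it is there that the $\beta$-separation hypothesis is used (to make the neighborhoods disjoint, hence each edge $O(1)$-charged), and it is the source of the factor $2$; everything else is bookkeeping.
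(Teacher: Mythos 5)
Your proof is correct and follows essentially the same route as the paper: apply Lemma \ref{lem:bump} to each $T_i$ with $\e = \beta/2$, use the $\beta$-separation to get disjoint supports, lower-bound each denominator by $\delta M$ via property (i) and \eqref{eq:mass}, upper-bound the sum of numerators by $2(1+4/\beta)^2 M\,\mathcal R_G(F)$ via property (iii) and the fact that each edge is charged at most twice, then sort and apply the averaging argument, finishing with the coordinate-extraction / mediant inequality to pass from $\mathbb R^h$-valued to $\mathbb R$-valued maps. No gaps; this matches the paper's argument step for step.
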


\begin{proof}
For each $i \in [r]$, let $\psi_i : V \to \mathbb R^h$ be the result
of applying Lemma \ref{lem:bump} to the domain $T_i$ with parameter $\e = \beta/2$.
Since $\hat d_F(T_i, T_j) \geq \beta$ for $i\neq j$, property (ii) of Lemma \ref{lem:bump}
ensures that the functions $\{\psi_i\}_{i=1}^r$ are disjointly supported.

Additionally property (i) implies that for each $i \in [r]$,
$$
\sum_{v \in V} w(v) \|\psi_i(v)\|^2 \geq \sum_{v \in T_i} w(v) \|F(v)\|^2 \geq \delta \sum_{v \in V} w(v) \|F(v)\|^2,
$$
and by property (iii) of Lemma \ref{lem:bump}, and since the supports are disjoint,
$$
\sum_{u \sim v} \sum_{i=1}^r w(u,v) \|\psi_i(u)-\psi_i(v)\|^2 \leq 2 \left(1+\frac{4}{\beta}\right)^2 \sum_{u \sim v} w(u,v) \|F(u)-F(v)\|^2\,.
$$
In particular, if we reorder the maps so that $\mathcal R_G(\psi_1) \leq \mathcal R_G(\psi_2) \leq \cdots \leq \mathcal R_G(\psi_r)$,
then the preceding two inequalities imply \eqref{eq:averaging}.

These maps $\{\psi_i\}$ take values in $\mathbb R^h$, but it is easy to see that
for any $\psi : V \to \mathbb R^h$, there exists a coordinate $j \in \{1,2,\ldots,h\}$ such that
the map $\tilde \psi : V \to \mathbb R$ defined by $\tilde \psi(v) \defeq \psi(v)_j$ has $\mathcal R_G(\tilde \psi) \leq \mathcal R_G(\psi)$.
This follows from the general inequality $\frac{a_1+a_2+\cdots+a_k}{b_1+b_2+\cdots+b_k} \geq \min_i \frac{a_i}{b_i}$,
valid for all $a_1, \ldots, a_k, b_1, \ldots, b_k \geq 0$ with some $b_i > 0$.
\end{proof}

\subsection{Random partitioning}
\label{sec:randompart}

From Lemma \ref{lem:manybumps}, to find many disjointly supported functions with
small Rayleigh quotient, it suffices to partition $(V, \hat d_F)$ into well separated regions,
each of which contains a large fraction of the $\ell^2$ mass of $F$.
We will use a suitable distribution over random partitions and argue that
at least one partition in the support of the distribution is good for this purpose.

\begin{lemma}\label{lem:ksets}
Let $r, k \in \mathbb N$ be given with $k/2 \leq r \leq k$, and
suppose that the map $F : V \to \mathbb R^h$ is $(\Delta,\frac{1}{k} +\frac{k-r+1}{8kr})$-spreading for some $\Delta > 0$.
Suppose additionally there is a random partition $\mathcal P$ with the properties that
\begin{enumerate}
\item For every $S \in \mathcal P$, $\diam(S, d_F) \leq \Delta$, and
\item For every $v \in V$, $\pr[B_{\hat d_F}(v, \Delta/\alpha) \subseteq \mathcal P(v)] \geq 1 - \frac{k-r+1}{4r}$\,.
\end{enumerate}
Then there exist r disjoint subsets $T_1, T_2, \ldots, T_r \subseteq V$ such that for each $i \neq j$, we have $\hat d_F(T_i, T_j) \geq 2\Delta/\alpha$,
and for every $i =1,2,\ldots,k$,
$$
\sum_{v \in T_i} w(v) \|F(v)\|^2 \geq \frac{1}{2k} \sum_{v \in V} w(v) \|F(v)\|^2\,.
$$
\end{lemma}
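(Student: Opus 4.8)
The plan is to fix a single realization of $\mathcal P$ that behaves like the average, pass to the ``cores'' of its pieces, and then bundle those cores into $r$ heavy, well-separated sets.

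First I would use linearity of expectation. For a realization $P$ of $\mathcal P$, call a vertex $v$ \emph{bad} if $B_{\hat d_F}(v,\Delta/\alpha)\not\subseteq P(v)$. By property (ii), $\pr[v\text{ bad}]\le \frac{k-r+1}{4r}$ for every $v$, so the expectation of $\sum_{v\text{ bad}} w(v)\|F(v)\|^2$ is at most $\frac{k-r+1}{4r}M$, where $M\defeq\sum_{v\in V}w(v)\|F(v)\|^2$. Fix a realization $P=\{S_1,\dots,S_p\}$ attaining at most this value, and for each piece define its \emph{core} $C_\ell\defeq\{v\in S_\ell : B_{\hat d_F}(v,\Delta/\alpha)\subseteq S_\ell\}$, i.e.\ $S_\ell$ with its bad vertices deleted; put $c_\ell\defeq\sum_{v\in C_\ell}w(v)\|F(v)\|^2$. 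Then $\sum_\ell c_\ell\ge\bigl(1-\frac{k-r+1}{4r}\bigr)M$. On the other hand, property (i) gives $\diam(S_\ell,d_F)\le\Delta$, so the spreading hypothesis yields $c_\ell\le\sum_{v\in S_\ell}w(v)\|F(v)\|^2\le\eta M$ with $\eta=\frac1k+\frac{k-r+1}{8kr}$. Note also that the $C_\ell$ are in bijection with the pieces, so distinct cores lie in distinct pieces.

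Next I would record that cores of distinct pieces are $\hat d_F$-separated by $2\Delta/\alpha$. Given $u\in C_\ell$, $v\in C_{\ell'}$ with $\ell\ne\ell'$, take a shortest $\hat d_F$-path $u=x_0\sim\cdots\sim x_t=v$; since $u\in S_\ell$ and $v\notin S_\ell$ there is an edge $\{x_j,x_{j+1}\}$ with $x_j\in S_\ell$, $x_{j+1}\notin S_\ell$. Since $B_{\hat d_F}(u,\Delta/\alpha)\subseteq S_\ell$ and $x_{j+1}\notin S_\ell$, the prefix $x_0\cdots x_{j+1}$ has length $>\Delta/\alpha$; since $x_j\in S_\ell\subseteq\overline{S_{\ell'}}$ and $B_{\hat d_F}(v,\Delta/\alpha)\subseteq S_{\ell'}$, the suffix $x_j\cdots x_t$ has length $>\Delta/\alpha$; as these subpaths share only the edge $\{x_j,x_{j+1}\}$, the total length is at least $2\Delta/\alpha$ (this is the usual ``midpoint'' consequence of $\hat d_F$ being a path pseudo-metric; equivalently, the open $(\Delta/\alpha)$-neighbourhoods of distinct cores in $\hat d_F$ are pairwise disjoint, which is exactly the separation Lemma~\ref{lem:bump} will need). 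Consequently, any union of cores taken from pairwise distinct pieces inherits this same pairwise separation.

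The heart of the argument is then the bundling: partition the cores $C_1,\dots,C_p$ into $r$ groups, each of total mass $\ge\frac1{2k}M$, and set $T_i$ equal to the union of the cores in group $i$. Since cores in each group come from distinct pieces (and groups use disjoint sets of pieces), the previous paragraph gives $\hat d_F(T_i,T_j)\ge 2\Delta/\alpha$ for $i\ne j$, while the mass bound is built in. To construct the groups, sort the cores by decreasing mass and greedily open a new group, adding cores until it first reaches mass $\frac1{2k}M$; a group closed this way has mass below $\frac1{2k}M$ plus that of its last (hence smallest-in-group) core, and successive groups' last cores are non-increasing. If fewer than $r$ groups were completed, the leftover cores would have mass $<\frac1{2k}M$, and comparing $\sum_\ell c_\ell\ge(1-\frac{k-r+1}{4r})M$ with the mass consumed by the completed groups yields a contradiction once $c_\ell\le\eta M$ and $k/2\le r\le k$ are substituted. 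I expect this counting to be the main obstacle: the crude estimate ``each group wastes at most $\eta M$'' is slightly too lossy — it falls short by roughly one group when $r=k$ — so one must argue a little more carefully (case on how many cores a group uses, and use that the number of cores is an integer). The constants $\frac{k-r+1}{8kr}$ and $\frac{k-r+1}{4r}$ in the hypotheses are calibrated precisely so that this sharper count closes, giving the stated $r$ sets $T_1,\dots,T_r$, which are disjoint, pairwise $(2\Delta/\alpha)$-separated, and each of mass at least $\frac1{2k}M$.
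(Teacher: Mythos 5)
Your structure matches the paper's proof: fix a good realization of $\mathcal P$ by averaging, pass to the cores $\tilde S = \{v \in S : B_{\hat d_F}(v,\Delta/\alpha) \subseteq S\}$, bound each core's mass by $\eta M$ (with $\eta = \frac{1}{k} + \frac{k-r+1}{8kr}$) via the spreading hypothesis and property (i), and bundle cores greedily into $r$ well-separated groups. However, you leave the decisive counting step undone, flagging it yourself as ``the main obstacle.'' Your instinct is right that the crude per-group bound $\frac{1}{2k}M + \eta M$ is too lossy; the bound you need is that every completed group has mass at most $\eta M$. Concretely, with cores sorted by decreasing mass: a group that closes as a singleton has mass equal to that one core, which is $\leq \eta M$; a group with at least two cores must have had its first (largest-in-group) core of mass $< \frac{1}{2k}M$ (else it would have closed immediately), hence its last core also has mass $< \frac{1}{2k}M$, giving group mass $< \frac{1}{k} M \leq \eta M$. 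With this in hand, the arithmetic to verify is $(r-1)\eta \leq 1 - \frac{k-r+1}{4r} - \frac{1}{2k}$ for all $r \in [k/2,k]$, which is precisely where the hypotheses' constants are used; the paper writes out this inequality, and your proof is incomplete without both the case split and this verification.

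A secondary point: your proof of the separation double-counts the shared edge $\{x_j,x_{j+1}\}$ between the prefix and the suffix, so it does not actually deliver $\hat d_F(T_i,T_j) \geq 2\Delta/\alpha$ (and in fact that constant is not attainable in general for graph shortest-path metrics, since the ``midpoint'' of the path may lie strictly inside an edge). What you do get --- and what you correctly flag parenthetically as the property Lemma~\ref{lem:manybumps} actually consumes --- is that the $(\Delta/\alpha)$-neighborhoods in $\hat d_F$ of distinct cores are contained in the disjoint pieces $S_\ell$, hence are pairwise disjoint, which already gives $\hat d_F(T_i,T_j) > \Delta/\alpha$. State that as your conclusion rather than the flawed $2\Delta/\alpha$ argument.
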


\begin{proof}
For a subset $S \subseteq V$, define $$\tilde S \defeq \{x \in S : B_{\hat d_F}(x, \Delta/\alpha) \subseteq S\}\,.$$
Let $\mathcal M = \sum_{v \in V} w(v) \|F(v)\|^2 > 0.$
By linearity of expectation, there exists a partition $P$ such that
for every $S \in P$, $\diam(S,d_F) \leq \Delta$, and also
\begin{equation}\label{eq:haveleft}
\sum_{S \in P} \sum_{v \in \tilde S} w(v) \|F(v)\|^2 \geq \left(1-\frac{k-r+1}{4r}\right) \mathcal M\,.
\end{equation}

Furthermore, by the spreading property of $F$, we have, for each $S \in P$,
$$\sum_{x \in S} w(v) \|F(v)\|^2 \leq \frac{1}{k}\left(1 + \frac{k-r+1}{8r}\right) \mathcal M\,.$$
Therefore we may take disjoint unions of the sets $\{ \tilde S : S \in P\}$ to form at least $r$ disjoint sets
$T_1, T_2, \ldots, T_{r}$ with the property that for every $i=1,2,\ldots, r$, we have
$$
\sum_{v \in T_i} w(v) \|F(v)\|^2 \geq \frac{1}{2k} \cal M
$$
because the first $r-1$ pieces will have total mass at most
$$
\frac{r-1}{k} \left(1+\frac{k-r+1}{8r}\right){\cal M} \leq \left(1-\frac{k-r+1}{4r}-\frac{1}{2k}\right)\cal{E},
$$
for all $r \in [k/2,k]$,
leaving at least $\frac{\cal M}{2k}$ mass left over from \eqref{eq:haveleft}.
\end{proof}

We mention a representative corollary that follows from the conjunction of Lemmas \ref{lem:manybumps} and \ref{lem:ksets}.

\begin{corollary}
\label{cor:fewsets}
Let $k \in \mathbb N$ and $\delta \in (0,1)$ be given.
Suppose the map $F : V \to \mathbb R^h$ is $(\Delta, \frac{1}{k} + \frac{\delta}{48k})$-spreading
for some $\Delta \leq 1$, and
there is a random partition $\mathcal P$ with the properties that
\begin{enumerate}
\item For every $S \in \mathcal P$, $\diam(S, d_F) \leq \Delta$, and
\item For every $v \in V$, $\pr[B_{\hat d_F}(v, \Delta/\alpha) \subseteq \mathcal P(v)] \geq 1 - \frac{\delta}{24}$\,.
\end{enumerate}
Then there are at least $r \geq \lceil (1-\delta) k\rceil$
disjointly supported functions $\psi_1, \psi_2, \ldots, \psi_r : V \to \mathbb R$ such that
$$
\mathcal R_G(\psi_i) \lesssim \frac{\alpha^2}{\delta \Delta^2} \mathcal R_G(F)\,.
$$
\end{corollary}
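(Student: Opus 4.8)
The plan is to combine Lemma~\ref{lem:ksets} with Lemma~\ref{lem:manybumps}, after first massaging the spreading and padding parameters to match the hypotheses of Lemma~\ref{lem:ksets}. First I would set $r = \lceil (1-\delta) k \rceil$, so that $k - r + 1 \le \delta k + 1$. A short computation (using $\delta < 1$ and absorbing the additive $+1$, perhaps by handling tiny $k$ separately or just being slightly loose with constants) shows that $\frac{k-r+1}{8kr} \le \frac{\delta}{48k}$ and $\frac{k-r+1}{4r} \le \frac{\delta}{24}$, provided we also have $r \ge k/2$ — which holds for $\delta \le 1/2$, and for larger $\delta$ the claimed bound is only easier since we can always pass to a smaller $\delta$. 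Thus the hypotheses ``$F$ is $(\Delta, \tfrac1k + \tfrac{\delta}{48k})$-spreading'' and ``$\pr[B_{\hat d_F}(v,\Delta/\alpha) \subseteq \mathcal P(v)] \ge 1 - \tfrac{\delta}{24}$'' imply, respectively, that $F$ is $(\Delta, \tfrac1k + \tfrac{k-r+1}{8kr})$-spreading and that the padding probability is at least $1 - \tfrac{k-r+1}{4r}$, which are exactly conditions needed to invoke Lemma~\ref{lem:ksets}.

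Next I would apply Lemma~\ref{lem:ksets} to obtain $r$ disjoint subsets $T_1, \ldots, T_r \subseteq V$ with $\hat d_F(T_i, T_j) \ge 2\Delta/\alpha$ for $i \ne j$ and $\sum_{v \in T_i} w(v)\|F(v)\|^2 \ge \frac{1}{2k}\sum_{v\in V} w(v)\|F(v)\|^2$ for every $i$. These are precisely the ingredients Lemma~\ref{lem:manybumps} consumes, with the separation parameter $\beta = 2\Delta/\alpha$ and the mass parameter $\delta_0 = \frac{1}{2k}$ (renaming to avoid clashing with the $\delta$ in the statement). Feeding these into Lemma~\ref{lem:manybumps} yields disjointly supported $\psi_1, \ldots, \psi_r : V \to \mathbb R$ with
$$
\mathcal R_G(\psi_i) \le \frac{2}{\delta_0 (r - i + 1)} \left(1 + \frac{4}{\beta}\right)^2 \mathcal R_G(F)
\le \frac{2}{\delta_0} \left(1 + \frac{2\alpha}{\Delta}\right)^2 \mathcal R_G(F)\,,
$$
using $r - i + 1 \ge 1$ and $\beta = 2\Delta/\alpha$. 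Substituting $\delta_0 = \frac{1}{2k}$ gives a bound of the form $4k\,(1 + 2\alpha/\Delta)^2\,\mathcal R_G(F)$, and since $\Delta \le 1$ and $\alpha \ge 1$ (a padding partition of a nontrivial space has $\alpha \gtrsim 1$), the term $(1 + 2\alpha/\Delta)^2 \asymp \alpha^2/\Delta^2$. This produces a bound $\lesssim k\,\alpha^2/\Delta^2 \cdot \mathcal R_G(F)$, which is a factor of $k/\delta$ worse than claimed; to recover the stated $\frac{\alpha^2}{\delta \Delta^2}$ one must instead keep the $(r-i+1)$ factor and use the averaging more carefully — specifically, after reordering, at least $\lceil(1-\delta)k\rceil$ of the $\psi_i$ satisfy $r - i + 1 \ge \delta r / 2 \gtrsim \delta k$, so for those indices $\frac{2}{\delta_0(r-i+1)} \lesssim \frac{2 \cdot 2k}{\delta k} = \frac{4}{\delta}$, cancelling the $k$.

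The main obstacle is exactly this last point: one cannot simply bound $r - i + 1 \ge 1$ for all the output functions and still get the right dependence. The resolution is that Lemma~\ref{lem:manybumps}'s conclusion \eqref{eq:averaging} is genuinely strongest for small $i$ and weakest for $i$ near $r$, so one should throw away a $\delta$-fraction of the functions (the ones with the largest Rayleigh quotient) and retain $r' = \lceil(1-\delta)k\rceil$ of them — note this is the same count $r$ we already chose, so in fact one wants to start Lemma~\ref{lem:ksets} with slightly more than $r$ sets, or equivalently accept that among the $r$ functions produced, reindexing so that $\mathcal R_G(\psi_1) \le \cdots \le \mathcal R_G(\psi_r)$, for $i \le r'' \defeq \lceil(1-\delta)k\rceil$ with a fresh application we have $r - i + 1$ comparable to $\delta k$. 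Care with these index bookkeeping details, and with verifying $r \ge k/2$ (needed for Lemma~\ref{lem:ksets}) holds throughout, is the only real work; everything else is direct substitution.
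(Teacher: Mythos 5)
Your proposal ultimately lands on the paper's own proof: one applies Lemma~\ref{lem:ksets} with an over-provisioned $r = \lceil(1-\delta/2)k\rceil$, passes the resulting sets to Lemma~\ref{lem:manybumps}, and then keeps only the first $\lceil(1-\delta)k\rceil$ functions by Rayleigh quotient, for which the factor $r-i+1 \gtrsim \delta k$ in \eqref{eq:averaging} cancels the $2k$ coming from the mass parameter $\delta_0 = 1/(2k)$, yielding the stated $\lesssim \alpha^2/(\delta\Delta^2)$ bound. You correctly diagnose that starting with $r=\lceil(1-\delta)k\rceil$ loses a factor of $k/\delta$ and that the fix is to start with more sets and discard the worst ones; the only step you leave implicit is the (short) check that the corollary's spreading constant $\tfrac1k + \tfrac{\delta}{48k}$ and padding probability $1-\tfrac{\delta}{24}$ still satisfy the hypotheses of Lemma~\ref{lem:ksets} with the larger $r = \lceil(1-\delta/2)k\rceil$, which holds since then $k-r+1 \ge \delta k/2 \ge \delta r/6$.
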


\begin{proof}
In this case, we set $r=\lceil (1-\delta/2) k\rceil$ in our application of Lemma \ref{lem:ksets}.
After extracting at least $\lceil (1-\delta/2) k\rceil$ sets, we apply Lemma \ref{lem:manybumps},
but only take the first $r'=\lceil (1-\delta)k \rceil$ functions $\psi_1, \psi_2, \ldots, \psi_{r'}$.
\end{proof}

Note, in particular, that we can apply the preceding corollary with $\delta = \frac{1}{2k}$ to obtain $r=k$.

\remove{
If we only wish to extract $r \geq (1-\delta) k$ functions, the quantitative bounds improve significantly.

\begin{corollary}
\label{cor:fewsets}
For every $\delta \in (0,\frac12)$, the following holds.
Suppose that for some $k \in \mathbb N$, the map $F : V \to \mathbb R^h$ is $(\Delta, \frac{1}{k} + \frac{\delta}{48 k})$-spreading
for some $\Delta \leq 1$, and
there is a random partition $\mathcal P$ with the properties that
\begin{enumerate}
\item For every $S \in \mathcal P$, $\diam(S, d_F) \leq \Delta$, and
\item For every $v \in V$, $\pr[B_{\hat d_F}(v, \Delta/\alpha) \subseteq \mathcal P(v)] \geq 1 - \frac{\delta}{24}\,.$
\end{enumerate}
Then there are $r \geq (1-\delta)k $ disjointly supported functions $\psi_1, \psi_2, \ldots, \psi_r : V \to \mathbb R$ such that
$$
\mathcal R_G(\psi_i) \lesssim \frac{\alpha^2}{\delta \Delta^2} \mathcal R_G(F)\,.
$$
\end{corollary}

\begin{proof}
In this case, we set $r=\lceil (1-\delta/2) k\rceil$ in our application of Lemma \ref{lem:ksets}.
After extracting at least $\lceil (1-\delta/2) k\rceil$ sets, we apply Lemma \ref{lem:manybumps},
but only take the first $r'=\lceil (1-\delta)k \rceil$ functions $\psi_1, \psi_2, \ldots, \psi_{r'}$.
\end{proof}
}

\subsection{Higher-order Cheeger inequalities}
\label{sec:higherorder}

We now present some theorems applying our machinery to embeddings
which come from the eigenfunctions of $\mathcal L_G$.

\begin{theorem}\label{thm:ksets}
For any $\delta \in (0,1)$, and any weighted graph $G=(V,E,w)$, there exist
$r \geq \lceil (1-\delta)k\rceil$ disjointly supported functions $\psi_1, \psi_2, \ldots, \psi_r : V \to \mathbb R$ such that
\begin{equation}\label{eq:c1}
\mathcal R_G(\psi_i) \lesssim  \frac{k^{2}}{\delta^4} \lambda_k\,.
\end{equation}
where $\lambda_k$ is the $k$th smallest eigenvalue of $\mathcal L_G$.
If $G$ excludes $K_h$ as a minor, then the bound improves to
\begin{equation}\label{eq:c2}
\mathcal R_G(\psi_i) \lesssim  \frac{h^4}{\delta^4} \lambda_k,
\end{equation}
and if $G$ has genus at most $g \geq 1$, then one gets
\begin{equation}\label{eq:c3}
\mathcal R_G(\psi_i) \lesssim  \frac{\log^2(g+1)}{\delta^4} \lambda_k\,.
\end{equation}
\end{theorem}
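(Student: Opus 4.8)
The plan is to run the general paradigm of Corollary~\ref{cor:fewsets} on the spectral embedding. First I would fix an $\ell^2(V,w)$-orthonormal system $f_1,\dots,f_k : V \to \mathbb R$ with $\mathcal R_G(f_i) \le \lambda_k$ for each $i$ --- for instance $f_i = D^{-1/2} g_i$, where $g_i$ is the $i$-th eigenfunction of $\mathcal L_G$, which is $\ell^2(V,w)$-orthonormal with $\mathcal R_G(f_i) = \lambda_i \le \lambda_k$ --- and set $F(v) = (f_1(v),\dots,f_k(v))$ as in \eqref{eq:specembedding}. Expanding \eqref{eq:extral} coordinatewise and using $\|f_i\|_{\ell^2(V,w)} = 1$ gives $\mathcal R_G(F) = \frac1k \sum_i \mathcal R_G(f_i) \le \lambda_k$. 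By Lemma~\ref{lem:spread}, $F$ is $\bigl(\Delta, \tfrac{1}{k(1-\Delta^2)}\bigr)$-spreading for every $\Delta \in (0,1)$; taking $\Delta = \sqrt\delta/7$ (so $\Delta^2 = \delta/49$, whence $1/(1-\Delta^2) \le 1 + \delta/48$ since $\delta < 1$) makes $F$ be $\bigl(\Delta, \tfrac1k + \tfrac{\delta}{48k}\bigr)$-spreading with $\Delta \le 1$, exactly the spreading hypothesis of Corollary~\ref{cor:fewsets}. (Vertices with $F(v)=0$ carry no $\ell^2$-mass; placing their images at the origin of $\mathbb R^k$, at $d_F$-distance $1 > \Delta$ from $S^{k-1}$, leaves all verifications below unchanged, so I will suppress this point.)

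Second, for each graph class I would exhibit a random partition meeting the two remaining hypotheses of Corollary~\ref{cor:fewsets}: every piece has $d_F$-diameter at most $\Delta$, and $B_{\hat d_F}(v,\Delta/\alpha)$ lies in $v$'s piece with probability at least $1-\delta/24$. The point is the asymmetry between the two metrics: since $\hat d_F \ge d_F$, any partition $\Delta$-bounded in $\hat d_F$ is automatically $\Delta$-bounded in $d_F$, and $B_{\hat d_F}(v,R) \subseteq B_{d_F}(v,R)$, so $\hat d_F$-padding is weaker than $d_F$-padding. For a general graph I would apply Theorem~\ref{thm:rkpad} to the normalized images $\{F(v)/\|F(v)\|\} \subseteq S^{k-1} \subseteq \mathbb R^k$ (whose Euclidean metric is precisely $d_F$) with parameter $\delta' = \delta/24$, obtaining $\alpha = O(k/\delta)$, a partition $\Delta$-bounded and $(\Delta/\alpha)$-padded in $d_F$, hence also in $\hat d_F$. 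If $G$ excludes $K_h$ as a minor, then $G$ with edge lengths $d_F(u,v)$ still excludes $K_h$, so Theorem~\ref{thm:planarpad} applied to $(V,\hat d_F)$ with $\delta' = \delta/24$ gives $\alpha = O(h^2/\delta)$, $\Delta$-bounded and $(\Delta/\alpha)$-padded in $\hat d_F$. If $G$ has genus $g$, Theorem~\ref{thm:genuspad} applied to $(V,\hat d_F)$ gives $\alpha = O(\log(g+1)/\delta)$ in the same manner.

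Finally, plugging each of these partitions into Corollary~\ref{cor:fewsets} yields $r \ge \lceil (1-\delta)k\rceil$ disjointly supported $\psi_1,\dots,\psi_r : V \to \mathbb R$ with
$$\mathcal R_G(\psi_i) \lesssim \frac{\alpha^2}{\delta\,\Delta^2}\,\mathcal R_G(F) \lesssim \frac{\alpha^2}{\delta^2}\,\lambda_k ,$$
using $\Delta^2 \asymp \delta$ and $\mathcal R_G(F) \le \lambda_k$; substituting the three values of $\alpha$ gives $\tfrac{k^2}{\delta^4}\lambda_k$, $\tfrac{h^4}{\delta^4}\lambda_k$, and $\tfrac{\log^2(g+1)}{\delta^4}\lambda_k$, which are precisely \eqref{eq:c1}, \eqref{eq:c2}, \eqref{eq:c3}. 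I expect the only real obstacle here to be bookkeeping rather than ideas: one must tune the two free parameters ($\Delta = \sqrt\delta/7$ and $\delta' = \delta/24$) so that the spreading constant and the padding probability line up with the exact numerology of Lemma~\ref{lem:ksets} hidden inside Corollary~\ref{cor:fewsets}, and one must keep careful track of which of the two metrics $d_F$ (for boundedness) and $\hat d_F$ (for padding) each hypothesis refers to --- this is exactly what permits the excluded-minor and bounded-genus partition theorems to be substituted for the Euclidean one, since those theorems only control the intrinsic shortest-path metric $\hat d_F$.
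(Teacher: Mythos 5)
Your proposal is correct and follows the same route the paper takes: build the spectral embedding $F$, use Lemma~\ref{lem:spread} with $\Delta \asymp \sqrt{\delta}$ to get the $(\Delta, \tfrac1k + \tfrac{\delta}{48k})$-spreading hypothesis, apply Theorem~\ref{thm:rkpad} to $d_F$ (resp.\ Theorems~\ref{thm:planarpad}, \ref{thm:genuspad} to $\hat d_F$) to obtain the required $\alpha$, observe $\hat d_F \geq d_F$ to reconcile the two metrics in conditions (i) and (ii) of Corollary~\ref{cor:fewsets}, and multiply out. Your bookkeeping of $\Delta^2 \asymp \delta$, $\alpha^2/(\delta\Delta^2) \asymp \alpha^2/\delta^2$, $\mathcal R_G(F) \leq \lambda_k$, and of which metric is used for boundedness versus padding matches the paper exactly.
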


\begin{proof}
Let $f_1, f_2, \ldots, f_k : V \to \mathbb R$ be an $\ell^2(V,w)$-orthonormal
system of eigenfunctions corresponding to the first $k$ eigenvalues of $\mathcal L_G$,
and define $F : V \to \mathbb R^k$ by $F(v)=(f_1(v), f_2(v), \ldots, f_k(v))$.

Choose $\Delta \asymp \sqrt{\delta}$ so that $(1-\Delta^2)^{-1} \leq 1+\frac{\delta}{48}$.
In this case, Lemma \ref{lem:spread} implies that $F$ is $(\Delta, \frac{1}{k} + \frac{\delta}{48k})$-spreading.
Now, for general graphs, since $d_F$ is Euclidean, we can use Theorem \ref{thm:rkpad} applied to $d_F$ to achieve $\alpha \asymp k/\delta$
in the assumptions of Corollary \ref{cor:fewsets}.  Observe that $\hat d_F \geq d_F$,
so that $B_{\hat d_F}(v, \Delta/\alpha) \subseteq B_{d_F}(v, \Delta/\alpha)$, meaning that
we can satisfy both conditions (i) and (ii), verifying \eqref{eq:c1}.

For \eqref{eq:c2} and \eqref{eq:c3}, we use Theorems \ref{thm:planarpad} and \ref{thm:genuspad}, respectively,
applied to the shortest-path metric $\hat d_F$.  Again, since $\hat d_F \geq d_F$, we have that $\diam(S,\hat d_F) \leq \Delta$ implies
$\diam(S,d_F) \leq \Delta$, so conditions (i) and (ii) are satisfied with $\alpha \asymp h^2/\delta$ and $\alpha \asymp \log(g+1)/\delta$,
respectively.
\end{proof}

\remove{
Using Corollary \ref{cor:fewsets} instead yields the following result.

\begin{theorem}\label{thm:fewsets}
For every $\delta > 0$, the following holds.
For any weighted graph $G=(V,E,w)$, there exist
$r \geq (1-\delta)k$ disjointly supported functions $\psi_1, \psi_2, \ldots, \psi_r : V \to \mathbb R$ such that
\begin{equation}\label{eq:cheegerk2}
\mathcal R_G(\psi_i) \lesssim k^{2} \lambda_k\,.
\end{equation}
where $\lambda_k$ is the $k$th smallest eigenvalue of $\mathcal L_G$.
If $G$ excludes $K_h$ as a minor, then the bound improves to
\begin{equation*}
\mathcal R_G(\psi_i) \lesssim  h^4 \lambda_k,
\end{equation*}
and if $G$ has genus at most $g \geq 1$, then one gets
\begin{equation*}
\mathcal R_G(\psi_i) \lesssim  \log^2(g+1) \lambda_k\,.
\end{equation*}
\end{theorem}

\begin{proof}
We prove only \eqref{eq:cheegerk2}, since the other claims
are similar, and follow as in the proof of Theorem \ref{thm:ksets}.
Let $f_1, f_2, \ldots, f_k : V \to \mathbb R$ be an $\ell^2(V,w)$-orthonormal
system of eigenfunctions corresponding to the first $k$ eigenvalues of $\mathcal L_G$,
and define $F : V \to \mathbb R^k$ by $F(v)=(f_1(v), f_2(v), \ldots, f_k(v))$.

Choose $\Delta \asymp \sqrt{\delta}$ so that $(1-\Delta^2)^{-1} \leq 1+\frac{\delta}{48}$.
In this case, Lemma \ref{lem:spread} implies that $F$ is $(\Delta, \frac{1}{k} + \frac{1}{8k^2})$-spreading.
Now, for general graphs, we can use Theorem \ref{thm:rkpad} applied to $d_F$ to achieve $\alpha \asymp k^2$
in the assumptions of Lemma \ref{lem:ksets}.  Observe that $\hat d_F \geq d_F$,
so that $B_{\hat d_F}(v, \Delta/\alpha) \subseteq B_{d_F}(v, \Delta/\alpha)$, meaning that
we can satisfy both conditions (i) and (ii), verifying \eqref{eq:c1}.
\end{proof}
}

We remark that in Section \ref{sec:dimreduce}, we will give an alternate bound of
$O(\delta^{-7} \log^2 k)\cdot \lambda_k$ for \eqref{eq:c1},
which is better for moderate values of $\delta$.

\medskip

Finally, we can use the preceding theorems in conjunction
with Lemma \ref{lem:cheeger} to produce many non-expanding sets.

\begin{theorem}(Non-expanding $k$-partition)\label{thm:kpartition}
For any weighted graph $G=(V,E,w)$, there exists a partition
$V = S_1 \cup S_2 \cup \cdots \cup S_k$ such that
\begin{equation*}
\mathcal \phi_G(S_i) \lesssim  k^{4} \sqrt{\lambda_k}\,.
\end{equation*}
where $\lambda_k$ is the $k$th smallest eigenvalue of $\mathcal L_G$.
If $G$ excludes $K_h$ as a minor, then the bound improves to
\begin{equation*}
\mathcal \phi_G(S_i) \lesssim  h^2 k^{3} \sqrt{\lambda_k}\,,
\end{equation*}
and if $G$ has genus at most $g \geq 1$, then one gets
\begin{equation*}
\mathcal \phi_G(S_i) \lesssim  \log (g+1) k^{3} \sqrt{\lambda_k}\,.
\end{equation*}
\end{theorem}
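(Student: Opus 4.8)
The plan is to bootstrap from Theorem \ref{thm:ksets} applied with a suitable small value of $\delta$: that theorem hands us a large family of \emph{disjointly supported} functions with small Rayleigh quotient, and Lemma \ref{lem:cheeger} converts each into a subset with small conductance. The only work left is to go from a collection of $r = \lceil(1-\delta)k\rceil$ disjoint non-expanding sets (which need not cover $V$) to a genuine partition of $V$ into exactly $k$ pieces, without blowing up the conductance too much. The tradeoff governing the choice of $\delta$ is that Theorem \ref{thm:ksets} costs a $\delta^{-4}$ factor in the Rayleigh quotient (hence a $\delta^{-2}$ factor in the conductance after Cheeger), while we will see we need $\delta \asymp 1/k$, which is exactly what produces the $k^4$ in the general bound: $k^2/\delta^4 \asymp k^2 \cdot k^4 = k^6$ inside the square root, giving $k^3$, and one more factor of $k$ comes from the patching step below. (Similarly $h^4$ becomes $h^2 k^3$ and $\log^2(g+1)$ becomes $\log(g+1)\,k^3$ after taking square roots and accounting for the extra factor of $k$.)

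Concretely, I would proceed as follows. First, apply Theorem \ref{thm:ksets} with $\delta = \tfrac{1}{2k}$ to obtain $k$ disjointly supported functions $\psi_1,\dots,\psi_k : V \to \mathbb{R}$ with $\mathcal R_G(\psi_i) \lesssim k^2 \delta^{-4}\lambda_k \asymp k^6 \lambda_k$ (and the analogous bounds $h^4\delta^{-4}\lambda_k$, $\log^2(g{+}1)\delta^{-4}\lambda_k$ in the minor-free and bounded-genus cases). Next, apply Lemma \ref{lem:cheeger} to each $\psi_i$ to obtain sets $A_i \subseteq \supp(\psi_i)$ with $\phi_G(A_i) \leq \sqrt{2\,\mathcal R_G(\psi_i)} \lesssim \sqrt{k^6\lambda_k} = k^3\sqrt{\lambda_k}$. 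Since the supports of the $\psi_i$ are disjoint, the sets $A_1,\dots,A_k$ are pairwise disjoint and each is non-empty. Finally, absorb the leftover vertices $R = V \setminus (A_1 \cup \dots \cup A_k)$ into the $A_i$'s to form a partition. I would do this by setting $S_1 = A_1 \cup R$ and $S_i = A_i$ for $i \geq 2$; then only one set changes. The bound $\phi_G(S_1) = w(E(S_1,\overline{S_1}))/w(S_1) \leq w(E(A_1,\overline{A_1})) / w(A_1) + w(R)/w(A_1)$ — wait, more carefully: $E(S_1,\overline{S_1}) \subseteq E(A_1 \cup R, \overline{A_1})$, which has weight at most $w(E(A_1,\overline{A_1})) + w(R)$, while $w(S_1) \geq w(A_1)$, so $\phi_G(S_1) \leq \phi_G(A_1) + w(R)/w(A_1)$, and since $w(A_1)$ could be tiny this crude bound is not enough.

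Thus the \textbf{main obstacle} is controlling the conductance of the set that swallows the leftover mass. The fix is to be smarter: rather than dumping all of $R$ into a single $A_i$, I would grow the sets in a balanced way, e.g.\ process the vertices of $R$ and assign each $v \in R$ to the $A_i$ minimizing the incremental conductance, or — cleaner — observe that Theorem \ref{thm:miclo}/Theorem \ref{thm:ksets} can be invoked so that the sets $A_i$ each carry a $\gtrsim 1/k$ fraction of a natural mass, and use a charging argument: the edges from $R$ to $\bigcup A_i$ have total weight at most $\sum_i w(E(A_i,\overline{A_i}))$ plus edges internal to $R$, and we can bound $w(R)$ against $\sum_i w(A_i) = w(V) - w(R)$. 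Since there are only $k$ sets $A_i$ and each has $\phi_G(A_i) \lesssim k^3\sqrt{\lambda_k}$, summing gives total cut weight $\lesssim k^3\sqrt{\lambda_k}\sum_i w(A_i) \leq k^3\sqrt{\lambda_k}\, w(V)$; distributing $R$ so as to keep the sets' weights comparable and using that any single $S_i$ has weight $\gtrsim w(V)/k$ after balancing, one gets $\phi_G(S_i) \lesssim k \cdot k^3 \sqrt{\lambda_k} = k^4\sqrt{\lambda_k}$, which is exactly the claimed bound. The minor-free and genus bounds follow identically, replacing $k^3$ by $h^2 k^2$ and $\log(g{+}1) k^2$ respectively before the final factor of $k$ from the patching step. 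I would present the patching lemma abstractly (given $k$ disjoint sets each of conductance $\leq \phi$, one can form a $k$-partition with all parts of conductance $\lesssim k\phi$) and then apply it.
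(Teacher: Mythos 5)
Your high-level plan coincides with the paper's: apply Theorem~\ref{thm:ksets} with $\delta = \tfrac{1}{2k}$, convert each $\psi_i$ into a set $A_i \subseteq \supp(\psi_i)$ via Lemma~\ref{lem:cheeger}, and then patch the disjoint sets $A_1,\ldots,A_k$ into a genuine $k$-partition at the cost of one extra factor of $k$. Your exponent bookkeeping is also correct, and the abstract ``patching lemma'' you state at the end (given $k$ disjoint sets of conductance at most $\phi$, form a $k$-partition with every part of conductance $\lesssim k\phi$) is exactly the right thing to prove. The gap is that the proof you sketch for that lemma does not work.

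You correctly notice that naively setting $S_1 = A_1 \cup R$ fails when $w(A_1)$ is small, but your proposed fix --- spreading $R$ among the $A_i$'s so as to balance their weights --- breaks for two reasons. First, once $R$ is split across several parts, edges with both endpoints in $R$ become cut edges of the new partition, and nothing bounds their total weight in terms of $\sum_i w(E(A_i,\overline{A_i}))$; you even flag this term (``plus edges internal to $R$'') and then never control it. Second, your fallback claim that ``the sets $A_i$ each carry a $\gtrsim 1/k$ fraction'' of the weight is false: the $\psi_i$ each carry a $\gtrsim 1/k$ fraction of the $\ell^2$ mass of the spectral embedding, but the Cheeger sweep in Lemma~\ref{lem:cheeger} returns a level set $A_i$ of $\|\psi_i\|^2$, and there is no lower bound whatsoever on $w(A_i)$.

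The actual fix is simpler than anything you tried and needs neither of these ingredients: keep all of $R$ in \emph{one} set, but make it the right one. Reorder so that $w(A_k)$ is largest, set $S_i = A_i$ for $i < k$, and put $S_k' = V \setminus (A_1 \cup \cdots \cup A_{k-1})$. Since $\overline{S_k'} = A_1 \cup \cdots \cup A_{k-1}$, every cut edge of $S_k'$ has its other endpoint in some $A_i$ with $i<k$, so $w(E(S_k',\overline{S_k'})) \le \sum_{i<k} w(E(A_i,\overline{A_i})) = \sum_{i<k}\phi_G(A_i)\,w(A_i) \le \max_i \phi_G(A_i)\cdot w(V)$. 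Meanwhile $S_k' \supseteq A_k$ is the heaviest of the $k$ parts, so $w(S_k') \ge w(V)/k$, giving $\phi_G(S_k') \le k\,\max_i \phi_G(A_i) \lesssim k^4\sqrt{\lambda_k}$. The two ideas you were missing: merge $R$ into the \emph{largest} $A_i$, and bound the cut of the merged set using the cuts of the \emph{other} parts rather than trying to bound its own boundary plus an uncontrolled correction.
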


\begin{proof}
First apply Theorem \ref{thm:ksets} with $\delta = \frac{1}{2k}$
to find disjointly supported functions $\psi_1, \psi_2, \ldots, \psi_k : V \to \mathbb R$
satisfying \eqref{eq:c1}.  Now apply Lemma \ref{lem:cheeger} to find sets $S_1, S_2, \ldots, S_k$ with $S_i \subseteq \supp(\psi_i)$
and $\phi_G(S_i) \leq \sqrt{2 \mathcal R_G(\psi_i)}$ for each $i=1,2,\ldots,k$.

Now reorder the sets so that $w(S_1) \leq w(S_2) \leq \cdots \leq w(S_k)$, and replace $S_k$ with the larger set
$S_k' = V \setminus (S_1 \cup S_2 \cup \cdots \cup S_{k-1})$ so that
$V = S_1 \cup S_2 \cup \cdots \cup S_{k-1} \cup S'_k$ forms a partition.
One can now easily check that
$$
\phi_G(S'_k) = \frac{w(E(S'_k,\overline{S'_k}))}{w(S'_k)} \leq \frac{\sum_{i=1}^{k-1} w(E(S_i,\overline{S_i}))}{w(S'_k)} \leq k \cdot \max_{i=1}^k \phi_G(S_k) \lesssim k^4
\sqrt{\lambda_k}\,.
$$
A similar argument yields the other two bounds.
\end{proof}

Using Theorem \ref{thm:ksets} in conjunction with Lemma \ref{lem:cheeger} again yields the following.

\begin{theorem}\label{thm:fewpieces}
For every $\delta \in (0,1)$ and any weighted graph $G=(V,E,w)$, there exist $r \geq \lceil (1-\delta)k\rceil$ disjoint sets
$S_1, S_2, \ldots, S_r \subseteq V$ such that,
\begin{equation}\label{eq:tobeimproved}
\mathcal \phi_G(S_i) \lesssim  \frac{k}{\delta^2} \sqrt{\lambda_k}\,.
\end{equation}
where $\lambda_k$ is the $k$th smallest eigenvalue of $\mathcal L_G$.
If $G$ excludes $K_h$ as a minor, then the bound improves to
\begin{equation*}
\mathcal \phi_G(S_i) \lesssim  \frac{h^2}{\delta^2} \sqrt{\lambda_k}\,,
\end{equation*}
and if $G$ has genus at most $g \geq 1$, then one gets
\begin{equation*}
\mathcal \phi_G(S_i) \lesssim  \frac{\log (g+1)}{\delta^2} \sqrt{\lambda_k}\,.
\end{equation*}
\end{theorem}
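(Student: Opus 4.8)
The plan is to deduce this immediately from Theorem~\ref{thm:ksets} by composing it with the Dirichlet--Cheeger rounding of Lemma~\ref{lem:cheeger}, exactly as in the first half of the proof of Theorem~\ref{thm:kpartition} but omitting the final merging step that turns the sets into a partition (which is what costs the extra factor of $k$ there).

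First I would invoke Theorem~\ref{thm:ksets} with the given $\delta$ to obtain $r \geq \lceil (1-\delta)k\rceil$ disjointly supported functions $\psi_1, \psi_2, \ldots, \psi_r : V \to \mathbb R$ with $\mathcal R_G(\psi_i) \lesssim \frac{k^2}{\delta^4}\lambda_k$ for each $i$; in the case that $G$ excludes $K_h$ as a minor one instead gets $\mathcal R_G(\psi_i) \lesssim \frac{h^4}{\delta^4}\lambda_k$, and for genus at most $g$ one gets $\mathcal R_G(\psi_i) \lesssim \frac{\log^2(g+1)}{\delta^4}\lambda_k$. Next, for each $i$ I would apply Lemma~\ref{lem:cheeger} to $\psi_i$, producing a set $S_i \subseteq \supp(\psi_i)$ with $\phi_G(S_i) \leq \sqrt{2\,\mathcal R_G(\psi_i)}$. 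Since the supports of the $\psi_i$ are pairwise disjoint and $S_i \subseteq \supp(\psi_i)$, the sets $S_1, \ldots, S_r$ are pairwise disjoint; they are also non-empty, since $\mathcal R_G(\psi_i) < \infty$ forces $\psi_i \not\equiv 0$, hence $\supp(\psi_i) \neq \emptyset$, and the set produced by Lemma~\ref{lem:cheeger} is non-empty.

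It then remains only to substitute the Rayleigh-quotient bounds: $\phi_G(S_i) \leq \sqrt{2\,\mathcal R_G(\psi_i)} \lesssim \sqrt{\tfrac{k^2}{\delta^4}\lambda_k} = \tfrac{k}{\delta^2}\sqrt{\lambda_k}$ in the general case, and likewise $\lesssim \tfrac{h^2}{\delta^2}\sqrt{\lambda_k}$ in the $K_h$-minor-free case and $\lesssim \tfrac{\log(g+1)}{\delta^2}\sqrt{\lambda_k}$ in the bounded-genus case, using $\sqrt{c^2 x} = c\sqrt{x}$ with $c \in \{k/\delta^2,\,h^2/\delta^2,\,\log(g+1)/\delta^2\}$. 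There is no real obstacle here—all the substantive work (spreading via Lemma~\ref{lem:spread}, localization via Lemma~\ref{lem:bump}, and random partitioning via Lemmas~\ref{lem:manybumps} and \ref{lem:ksets}, assembled in Theorem~\ref{thm:ksets}) has already been done; the only point worth a moment's attention is that disjointness of the supports is inherited by the rounded sets $S_i$, which is immediate.
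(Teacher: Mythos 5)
Your proposal is correct and is precisely the argument the paper intends: immediately after Theorem~\ref{thm:kpartition}, the paper states that Theorem~\ref{thm:fewpieces} follows from Theorem~\ref{thm:ksets} in conjunction with Lemma~\ref{lem:cheeger}, and your substitution $\sqrt{2\,\mathcal R_G(\psi_i)}\lesssim \sqrt{(k^2/\delta^4)\lambda_k}=(k/\delta^2)\sqrt{\lambda_k}$ (and its analogues) together with the disjointness observation is exactly the intended verification.
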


We remark that the bound \eqref{eq:tobeimproved} will be improved,
in various ways, in Section \ref{sec:improved}.

\section{Improved quantitative bounds}
\label{sec:improved}

A main result of this section is the following theorem.

\begin{theorem}\label{thm:optfewsets}
Let $G=(V,E,w)$ be a weighted graph and let $k \in \{1,2,\ldots,n\}$ and $\delta \in (0,1)$ be given.
Suppose that
$f_1, f_2, \ldots, f_k : V \to \mathbb R$ forms an $\ell^2(V,w)$-orthonormal system.
Then there exist $r \geq \lceil (1-\delta) k\rceil$ disjoint sets $S_1, S_2, \ldots, S_r \subseteq V$
with
$$
\phi_G(S_i) \lesssim \frac{1}{\delta^3} \sqrt{\frac{\sum_{i=1}^k \sum_{u \sim v} w(u,v)(f_i(u)-f_i(v))^2}{\sum_{i=1}^k \sum_{v \in V} w(v) f_i(v)^2} \cdot \log k}\
$$
\end{theorem}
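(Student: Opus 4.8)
The plan is to combine the machinery of Section 3 (localizing eigenfunctions, which reduces finding many non-expanding sets to finding many well-separated regions carrying enough $\ell^2$ mass) with an improved random-partitioning argument that avoids the factor $k$ lost in Theorem~\ref{thm:rkpad}. Write $\lambda \defeq \frac{\sum_{i=1}^k \sum_{u\sim v} w(u,v)(f_i(u)-f_i(v))^2}{\sum_{i=1}^k \sum_{v\in V} w(v) f_i(v)^2}$ and set $F(v) = (f_1(v),\ldots,f_k(v))$, so that $\mathcal{R}_G(F) = \lambda$ and, by Lemma~\ref{lem:spread}, $F$ is $(\Delta,\frac{1}{k(1-\Delta^2)})$-spreading for every $\Delta$; choosing $\Delta \asymp \sqrt{\delta}$ makes $F$ be $(\Delta,\frac{1}{k}+\frac{\delta}{48k})$-spreading, as in the proof of Theorem~\ref{thm:ksets}.

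The key new ingredient is a \emph{dimension reduction} step: rather than partitioning $d_F$ directly in $\mathbb{R}^k$ (which costs $\alpha \asymp k/\delta$ in Theorem~\ref{thm:rkpad}), first apply a random Johnson--Lindenstrauss projection to get $F' : V \to \mathbb{R}^{O(\log k)}$. One shows (this is the content of the lemma referenced as Lemma~\ref{lem:dimreduce} in the overview) that with positive probability $\mathcal{R}_G(F') \asymp \mathcal{R}_G(F)$ \emph{and} $F'$ retains an energy-spreading property of the form: any set of small $d_{F'}$-diameter carries at most $O(1/k)$ of the $\ell^2$ mass, up to a $\mathrm{poly}(\delta)$ loss — even though literal isotropy fails after projection. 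Then I apply Theorem~\ref{thm:rkpad} to $d_{F'}$ in dimension $O(\log k)$, getting a $(\Delta, O(\log k / \delta), 1-\delta/24)$-padded random partition, i.e. $\alpha \asymp \log k/\delta$. Feeding this into Corollary~\ref{cor:fewsets} (in the variant that extracts $r \ge \lceil (1-\delta)k\rceil$ disjointly supported functions $\psi_i$) yields $\mathcal{R}_G(\psi_i) \lesssim \frac{\alpha^2}{\delta \Delta^2}\,\lambda \lesssim \frac{\log^2 k}{\delta^4}\,\lambda$, and Lemma~\ref{lem:cheeger} converts each $\psi_i$ into a set $S_i \subseteq \supp(\psi_i)$ with $\phi_G(S_i) \le \sqrt{2\mathcal{R}_G(\psi_i)} \lesssim \frac{\log k}{\delta^2}\sqrt{\lambda}$. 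That already proves a version of the statement, but with $\log k$ rather than $\sqrt{\log k}$.

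To get the claimed $\sqrt{\log k}$ (and the $\delta^{-3}$ dependence), I abandon the two-stage ``localize then sweep'' pipeline and instead run a single combined rounding procedure directly on $F'$, as sketched under ``A new multi-way Cheeger inequality'': take a random $(\Delta, O(\sqrt{\log k}))$-Lipschitz partition of $d_{F'}$ (Theorem~\ref{thm:RkLip} in dimension $O(\log k)$), restrict $F'$ to the non-trivial pieces carrying enough mass, and \emph{within each piece} perform the Cheeger sweep on $\|F'(\cdot)\|$. Analyzing the expected boundary-crossing energy of the Lipschitz partition together with the Cheeger-type threshold rounding \emph{simultaneously} — rather than charging the partitioning cost and the sweep cost separately — is where the $\sqrt{L} = O((\log k)^{1/4})$ gets squared back up to exactly $\sqrt{\log k}$ inside the square root, giving $\phi_G(S_i) \lesssim \delta^{-3}\sqrt{\lambda\log k}$; I expect to need roughly $r \ge \lceil(1-\delta)k\rceil$ good pieces to survive, which forces the spreading bound $\tfrac1k + O(\delta/k)$ and hence the $\Delta \asymp \sqrt\delta$ choice, accounting for the $\delta$-dependence.

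The main obstacle is the dimension-reduction lemma: isotropy \eqref{eq:isotropy} is used crucially in Lemma~\ref{lem:spread} to guarantee that no small-diameter set captures too much mass, and this property is \emph{not} preserved verbatim under a random projection into only $O(\log k)$ dimensions. The argument must instead show that if some $d_{F'}$-ball of radius $\Delta$ captured $\gg 1/k$ of the mass while the corresponding $d_F$-structure did not, then an atypically large ($\gg 1/k$) fraction of the mass must have had its radial projection distorted by a constant factor under the projection — and a union bound over the (few, since the relevant configurations are low-dimensional) bad events shows this happens with probability $o(1)$. Making this quantitative, while keeping the loss down to $\mathrm{poly}(\delta)$ and $O(\log k)$ target dimension, is the delicate heart of the proof; everything downstream is an assembly of Lemmas~\ref{lem:bump}, \ref{lem:manybumps}, \ref{lem:ksets}, \ref{lem:cheeger} and Theorems~\ref{thm:rkpad}--\ref{thm:RkLip} already established above.
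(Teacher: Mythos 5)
Your proposal follows essentially the same route as the paper: first apply Gaussian (Johnson--Lindenstrauss) dimension reduction to the spectral embedding, argue that the resulting low-dimensional map is still approximately spreading (the paper's Lemma~\ref{lem:dimreduce}), then run a \emph{single} combined rounding step -- a $(\Delta, O(\sqrt{h}))$-Lipschitz random partition of the radial distance (Theorem~\ref{thm:RkLip}) interleaved with a random threshold sweep on $\|F^*(\cdot)\|$ -- and analyze both randomizations jointly (the paper's Lemma~\ref{lem:mwcheeger}/Lemma~\ref{lem:mwcheegerapp}). You correctly identify that the two-stage ``localize, then Cheeger-sweep'' pipeline loses a full factor of $h \asymp \log k$ via the padded-partition parameter, and that the simultaneous analysis recovers the Lipschitz parameter $\sqrt{h}$ instead.

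Two of the quantitative claims in your sketch are off, and the first one matters. You propose $\Delta \asymp \sqrt{\delta}$, inherited from Lemma~\ref{lem:spread} applied to $F$ itself. But the argument is run on the \emph{projected} map $F^* = \Gamma \circ F$, and Lemma~\ref{lem:dimreduce} degrades the spreading parameter by a multiplicative $(1+O(\Delta))$; so to end up with $\bigl(\Delta/4,\,\tfrac1k + O(\delta/k)\bigr)$-spreading for $F^*$ you are forced to take $\Delta \asymp \delta$, not $\sqrt{\delta}$. This is not a cosmetic change: it pushes the projection dimension to $h \asymp \delta^{-2}\log k$ (not $O(\log k)$ as you state), and in Lemma~\ref{lem:mwcheegerapp} the bound is $\lesssim \frac{\sqrt{h}}{\delta\Delta}\sqrt{\mathcal R_G(F^*)}$, which with $\sqrt{h}\asymp\sqrt{\log k}/\delta$ and $\Delta\asymp\delta$ gives exactly the claimed $\delta^{-3}\sqrt{\lambda\log k}$. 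With your parameters the spreading hypothesis of the multi-way Cheeger lemma is simply not met, and (were it met) you would get $\delta^{-3/2}$, which contradicts your own stated $\delta^{-3}$ -- so the $\delta$-bookkeeping needs to be redone as above. Second, the phrase ``$\sqrt{L}$ gets squared back up to $\sqrt{\log k}$'' misdescribes the mechanism: the Lipschitz constant $L\asymp\sqrt{h}$ enters the final bound \emph{linearly}, i.e.\ $\phi \lesssim \frac{L}{\Delta\delta}\sqrt{\mathcal R_G}$; the saving over the two-stage route is precisely that $L\asymp\sqrt h$ replaces the padded parameter $\alpha \asymp h$, not a square-root cancellation. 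These corrections aside, your identification of Lemma~\ref{lem:dimreduce} as the technical heart -- proving that small-diameter balls cannot carry much mass after projection unless a $\gg 1/k$ fraction of the mass was badly distorted -- is exactly right, and the rest of the assembly is faithful to the paper.
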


\begin{corollary}\label{cor:optfewsets}
For any weighted graph $G=(V,E,w)$, $k \in \{1,2,\ldots,n\}$, and $\delta \in (0,1)$,
there exist $r \geq \lceil (1-\delta)k\rceil$ disjoint sets $S_1, S_2, \ldots, S_r \subseteq V$
with
$$
\phi_G(S_i) \lesssim \frac{1}{\delta^3} \sqrt{\lambda_k \log k}\,,
$$
where $\lambda_k$ is the $k$th smallest eigenvalue of $\mathcal L_G$.
\end{corollary}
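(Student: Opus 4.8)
The plan is to follow the two-stage route outlined in the introduction: from the embedding $F : V \to \mathbb R^k$, $F(v) = (f_1(v),\dots,f_k(v))$, first compress to $m \asymp \log k$ dimensions by a random linear map, and then round by running a \emph{single} sweep that combines the radial partition with the Cheeger thresholding, rather than localizing first and invoking Lemma~\ref{lem:cheeger} afterward. The point of the combined rounding is that it sidesteps the quadratic loss of the naive approach: localization (Lemmas~\ref{lem:bump} and \ref{lem:manybumps}) needs well-\emph{separated} supports, hence a \emph{padded} random partition, whose padding parameter enters the Rayleigh quotient quadratically before the Cheeger square root is taken; a joint sweep needs no separation, so one may use a merely \emph{Lipschitz} partition of $\mathbb R^m$, whose Lipschitz constant $O(\sqrt m) = O(\sqrt{\log k})$ enters only linearly. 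This is precisely the difference between a loss of $\log k$ and the desired $\sqrt{\log k}$. (This rounding is the subject of Section~\ref{sec:mwcheeger}.)

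\medskip

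\emph{Step 1: dimension reduction.} By Lemma~\ref{lem:spread}, $F$ is $(\Delta_0, \tfrac{1}{k(1-\Delta_0^2)})$-spreading with respect to $G$, and $\mathcal R_G(F)$ is exactly the ratio in the statement; choosing $\Delta_0 \asymp \sqrt\delta$ makes $F$ be $(\Delta_0, \tfrac1k + \tfrac{c\delta}{k})$-spreading for a suitable small constant $c$. Let $\Pi : \mathbb R^k \to \mathbb R^m$ be a random Gaussian matrix normalized so that $\E\|\Pi x\|^2 = \|x\|^2$, and put $F' \defeq \Pi F$. It is easy to check that $\mathcal R_G(F') \asymp \mathcal R_G(F)$ with high probability. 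The delicate part is that $F'$ nonetheless retains a spreading estimate at some scale $\Delta \asymp \sqrt\delta$: any $S \subseteq V$ with $\diam(S, d_{F'}) \le \Delta$ satisfies $\sum_{v \in S} w(v)\|F'(v)\|^2 \le (\tfrac1k + \tfrac{c'\delta}{k})\sum_{v \in V} w(v)\|F'(v)\|^2$. Here the isotropy identity \eqref{eq:isotropy} fails for $F'$, so the bound must be argued indirectly: were some set of small $d_{F'}$-diameter to carry a $\gg \tfrac1k$ fraction of the mass of $F'$, then a $\gg\tfrac1k$ fraction of the $\|F\|^2$-mass would have had to be displaced by a definite amount by $\Pi$, an event of probability $o(1)$ once $m \asymp \delta^{-O(1)}\log k$. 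We isolate this as Lemma~\ref{lem:dimreduce} (Section~\ref{sec:dimreduce}).

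\medskip

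\emph{Step 2: the combined sweep.} Work with $F' : V \to \mathbb R^m$. The radial projections $v \mapsto F'(v)/\|F'(v)\|$ realize $(V, d_{F'})$ isometrically inside $\mathbb R^m$, so Theorem~\ref{thm:RkLip} furnishes a $(\Delta, O(\sqrt m))$-Lipschitz random partition $\mathcal P$ of $(V, d_{F'})$. For each piece $S$, run the Cheeger sweep of Lemma~\ref{lem:cheeger} applied to $v \mapsto \mathbf 1_{\{v \in S\}}\|F'(v)\|$: this produces $T_S \subseteq S$ whose conductance is controlled by a ratio of expectations over a uniform threshold, namely
\[
\phi_G(T_S)\,\textstyle\sum_{v\in S} w(v)\|F'(v)\|^2 \ \le\ \sum_{\substack{u\sim v \\ u,v\in S}} w(u,v)\big|\,\|F'(u)\|^2 - \|F'(v)\|^2\,\big|\ +\ \sum_{\substack{u\in S \\ v\notin S}} w(u,v)\,\|F'(u)\|^2 .
\]
Summing over all pieces of a partition $P$ that is ``good'' for $\mathcal P$ (attains the expected amount of edge-cutting), the first kind of term sums to at most $\sum_{u\sim v} w(u,v)\big|\|F'(u)\|^2 - \|F'(v)\|^2\big|$, and the second to $\sum_{u\sim v:\,P(u)\ne P(v)} w(u,v)(\|F'(u)\|^2 + \|F'(v)\|^2) \lesssim \tfrac{\sqrt m}{\Delta}\sum_{u\sim v} w(u,v)\,d_{F'}(u,v)(\|F'(u)\|^2 + \|F'(v)\|^2)$, using the Lipschitz property. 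Bounding $d_{F'}(u,v)\|F'(u)\| \le 2\|F'(u)-F'(v)\|$ by Lemma~\ref{lem:dF}, and $\big|\|x\|^2 - \|y\|^2\big| \le \|x-y\|\,\|x+y\|$, one application of Cauchy--Schwarz to \emph{each} (non-nested) sum yields
\[
\sum_{S \in P} \phi_G(T_S)\,\textstyle\sum_{v\in S} w(v)\|F'(v)\|^2 \ \lesssim\ \frac{\sqrt m}{\Delta}\,\sqrt{\mathcal R_G(F')}\ \sum_{v\in V} w(v)\|F'(v)\|^2 .
\]

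\medskip

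\emph{Step 3: extraction, and the main obstacle.} Writing $\mathcal E \defeq \sum_v w(v)\|F'(v)\|^2$, Step~2 says the $\|F'\|^2$-mass-weighted average of $\phi_G(T_S)$ over the pieces is $O(\tfrac{\sqrt m}{\Delta}\sqrt{\mathcal R_G(F')})$; by Markov, the pieces on which $\phi_G(T_S)$ exceeds $\delta^{-1}$ times this bound carry total mass at most $\delta \mathcal E$, so the ``good'' pieces carry at least $(1-\delta)\mathcal E$. By the spreading estimate of Step~1 each piece carries at most $(\tfrac1k + \tfrac{c'\delta}{k})\mathcal E$ (it is $\Delta$-bounded in $d_{F'}$), hence there are at least $\lceil(1-O(\delta))k\rceil$ good pieces. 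Rescaling $\delta$ by a constant and substituting $m \asymp \delta^{-O(1)}\log k$, $\Delta \asymp \sqrt\delta$, and $\mathcal R_G(F') \lesssim \mathcal R_G(F)$ gives $r \ge \lceil(1-\delta)k\rceil$ disjoint sets $S_1,\dots,S_r$ with $\phi_G(S_i) \lesssim \delta^{-3}\sqrt{\mathcal R_G(F)\log k}$, which is Theorem~\ref{thm:optfewsets}; Corollary~\ref{cor:optfewsets} then follows by taking $f_1,\dots,f_k$ to be an $\ell^2(V,w)$-orthonormal system of eigenfunctions of $\mathcal L_G$, so that $\mathcal R_G(F) \le \lambda_k$. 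I expect the crux to be Step~1: showing that $O(\log k)$ dimensions suffice to preserve the spreading property after isotropy is lost forces one to quantify precisely how much $\ell^2$-mass a random projection into so few dimensions can move, and this estimate is both the technical heart of the argument and the source of most of the $\delta^{-1}$ factors. A secondary, purely quantitative chore is to track how the powers of $\delta^{-1}$ from the calibration $\Delta_0 \asymp \sqrt\delta$, the mass accounting, and the discard step compound to at most $\delta^{-3}$.
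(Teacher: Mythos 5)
Your proposal is correct and follows essentially the same route as the paper: Gaussian dimension reduction to $O(\delta^{-O(1)}\log k)$ dimensions while preserving the spreading property (Lemma~\ref{lem:dimreduce}), followed by the combined rounding that pairs a $(\Delta, O(\sqrt{m}))$-Lipschitz random partition of the radial metric with a Cheeger sweep on $\|F'(\cdot)\|$ and extracts the low-conductance pieces by averaging (Lemmas~\ref{lem:mwcheeger} and \ref{lem:mwcheegerapp}, combined in Theorem~\ref{thm:optfewsets}). The only deviations are cosmetic bookkeeping---you sweep with an independent threshold per piece rather than a single random $\tau$, and your calibration $\Delta_0 \asymp \sqrt{\delta}$ forces the post-projection spreading overhead to be $1+O(\sqrt{\delta})$ rather than $1+O(\delta)$ so a rescaling $\delta \mapsto \delta^2$ (or simply taking $\Delta_0 \asymp \delta$ as the paper does) is needed, but either choice compounds to the same $\delta^{-3}$.
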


\subsection{Dimension reduction}
\label{sec:dimreduce}

One should observe that in Theorems \ref{thm:ksets} and \ref{thm:fewpieces}, the loss of $k^2$ in \eqref{eq:c1} and $k$ in
\eqref{eq:tobeimproved} comes
from the dimension of the eigenfunction embedding.
To achieve somewhat better bounds for general graphs,
we now show how to drastically reduce the dimension while preserving
the Rayleigh quotient and spreading properties.

Let $g_1, g_2, \ldots, g_h$ be i.i.d. $k$-dimensional Gaussians, and
consider the random mapping $\Gamma_{k,h} : \mathbb R^k \to \mathbb R^h$ defined by
$\Gamma_{k,h}(x) = h^{-1/2} (\langle g_1, x \rangle, \langle g_2, x \rangle, \ldots, \langle g_h, x \rangle)$.
Then we have the following basic estimates (see, e.g. \cite[Ch. 15]{Mat01} or \cite[Ch. 1]{LT11}).
For every $x \in \mathbb R^k$,
\begin{equation}\label{eq:prop1}
\E\left[\|\Gamma_{k,h}(x)\|^2\right] = \|x\|^2,
\end{equation}
and, for every $\delta \in (0,\frac12]$,
\begin{equation}\label{eq:prop2}
\pr\left[\vphantom{\bigoplus}\|\Gamma_{k,h}(x)\|^2 \notin [(1-\delta)\|x\|^2, (1+\delta)\|x\|^2]\right] \leq 2 e^{-\delta^2 h/12}\,,
\end{equation}
and for every $\lambda \geq 2$,
\begin{equation}\label{eq:tail}
\pr\left[\vphantom{\bigoplus} \|\Gamma_{k,h}(x)\|^2 \geq \lambda \|x\|^2\right] \leq e^{-\lambda h/12}\,.
\end{equation}

\begin{lemma}\label{lem:dimreduce}
Let $G=(V,E,w)$ be a weighted graph.
For every $k \in \mathbb N$, $\Delta \in [0,1]$,  and $\eta \geq 1/k$,
the following holds.
Suppose that $F : V \to \mathbb R^k$ is $(\Delta,\eta)$-spreading.
Then for some value $$h \lesssim \frac{1+\log(k)+\log\left(\tfrac{1}{\Delta}\right)}{\Delta^2}\,,$$ with
probability at least $1/2$, the map $\Gamma_{k,h}$ satisfies both of the following conditions:
\begin{enumerate}
\item ${\cal R}_G(\Gamma_{k,h} \circ F) \leq 8 \cdot \mathcal R_G(F)$, and
\item $\Gamma_{k,h} \circ F$ is $(\Delta/4, (1+\Delta)\eta)$-spreading with respect to $G$.
\end{enumerate}
\end{lemma}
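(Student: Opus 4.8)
The plan is to analyze the two required properties separately and then union-bound. For property (i), I would write $\mathcal R_G(\Gamma_{k,h} \circ F)$ as a ratio of a ``numerator'' $N = \sum_{u \sim v} w(u,v) \|\Gamma_{k,h}(F(u)-F(v))\|^2$ and a ``denominator'' $D = \sum_v w(v) \|\Gamma_{k,h}(F(v))\|^2$. By \eqref{eq:prop1} and linearity, $\E[N] = \sum_{u\sim v} w(u,v)\|F(u)-F(v)\|^2$ and $\E[D] = \sum_v w(v)\|F(v)\|^2$, so $\E[N]/\E[D] = \mathcal R_G(F)$. By Markov's inequality, $N \leq 4\E[N]$ with probability at least $3/4$; for $D$, I'd apply \eqref{eq:prop2} termwise with a constant $\delta$ (say $\delta = 1/2$), noting that $h \gtrsim \log k$ makes each failure probability at most, say, $n^{-2}$ — but since we only have $h \gtrsim (1+\log k + \log(1/\Delta))/\Delta^2$ and want a \emph{fixed} small failure probability, the cleanest route is to observe $\E[D] - \Var$-type control is unnecessary: just use that each $\|\Gamma_{k,h}(F(v))\|^2$ concentrates around $\|F(v)\|^2$ and sum, or simply apply \eqref{eq:prop2} to a single well-chosen aggregate. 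Actually the simplest is: $\E[(D-\E D)^2]$ is controlled by the subexponential tails, giving $D \geq \frac12 \E[D]$ with probability $\geq 7/8$, and then $\mathcal R_G(\Gamma_{k,h}\circ F) = N/D \leq 4\E[N]/(\frac12 \E[D]) = 8\mathcal R_G(F)$.

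For property (ii), the spreading property, this is the main obstacle. I need: for every $S \subseteq V$ with $\diam(S, d_{\Gamma_{k,h}\circ F}) \leq \Delta/4$, we have $\sum_{v\in S} w(v)\|\Gamma_{k,h}(F(v))\|^2 \leq (1+\Delta)\eta\,\mathcal E'$ where $\mathcal E' = \sum_v w(v)\|\Gamma_{k,h}(F(v))\|^2$. The difficulty is that the random set $S$ depends on the randomness of $\Gamma_{k,h}$, so one cannot just fix $S$ and apply concentration. The intended idea (as flagged in the introduction) is a contrapositive/counting argument: if $d_{\Gamma_{k,h}\circ F}(u,v)$ is small but $d_F(u,v)$ is large, then the radial projection of $F(u)-F(v)$ (relative to their norms) must have shrunk substantially under $\Gamma_{k,h}$, and for a fixed pair this happens with probability exponentially small in $h$. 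I would pick a net: for each vertex $u$, the ball $B_{d_F}(u, \Delta)$ has bounded $\ell^2$-$F$-mass by the spreading hypothesis. So I'd argue that, with probability $\geq 1/2$ over $\Gamma_{k,h}$, for every vertex $u$ the set of vertices $v$ with $d_{\Gamma_{k,h}\circ F}(u,v) \leq \Delta/4$ is contained in $B_{d_F}(u,\Delta)$ \emph{after discarding a small-mass exceptional set}; then any $d_{\Gamma_{k,h}\circ F}$-ball of radius $\Delta/8$ is contained in one $d_F$-ball of radius $\Delta$ plus an exceptional set whose $\Gamma$-mass is a tiny fraction of $\mathcal E'$.

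Concretely, I would proceed as follows. Fix a pair $u,v$ with $d_F(u,v) > \Delta$; by Lemma \ref{lem:dF} and the definition of $d_F$, this forces $\|F(u)/\|F(u)\| - F(v)/\|F(v)\|\| > \Delta$. A short computation shows the event $d_{\Gamma_{k,h}\circ F}(u,v) \leq \Delta/4$ requires either $\|\Gamma_{k,h}(F(u))\|$ or $\|\Gamma_{k,h}(F(v))\|$ to deviate by a factor $\asymp \Delta$ from its mean, or $\|\Gamma_{k,h}(F(u)/\|F(u)\| - F(v)/\|F(v)\|)\|$ to be $\lesssim \Delta/4$ while the original norm exceeds $\Delta$ — each of these has probability $\leq e^{-c\Delta^2 h} \leq \Delta^2/k$ by \eqref{eq:prop2} and the choice of $h$. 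Let $\mathrm{Bad}(u) = \{v : d_F(u,v) > \Delta,\ d_{\Gamma_{k,h}\circ F}(u,v)\leq \Delta/4\}$; then $\E\big[\sum_{v \in \mathrm{Bad}(u)} w(v)\|F(v)\|^2\big] \leq (\Delta^2/k)\,\mathcal E \leq \Delta\eta\,\mathcal E$ (using $\eta \geq 1/k$ and $\Delta\leq 1$). Summing over $u$ and using \eqref{eq:prop1} to relate back to $\mathcal E'$, together with the bound $\mathcal E' \geq \frac12\mathcal E$ from property (i)'s analysis, a union bound over the $O(1)$-many bad events (numerator large, denominator small, total bad-mass large) leaves probability $\geq 1/2$ that all hold. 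On that event, any $S$ with $\diam(S, d_{\Gamma_{k,h}\circ F})\leq \Delta/4$ splits as $S = (S \cap B_{d_F}(u_0,\Delta)) \cup (\text{subset of }\mathrm{Bad}(u_0))$ for any $u_0 \in S$; the first part has $F$-mass $\leq \eta\mathcal E$ by spreading and the second contributes at most $\Delta\eta\mathcal E$ in expectation — turning this into $(1+\Delta)\eta\,\mathcal E'$ after passing to the $\Gamma$-image requires one more application of \eqref{eq:prop2} to upgrade the $F$-mass bound to a $\Gamma_{k,h}\circ F$-mass bound, which is where the slack $\Delta$ in $(1+\Delta)$ gets absorbed. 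The genuinely delicate point — and the one I expect to need the most care — is handling the dependence between the choice of $S$ and the random map; the counting argument above is designed precisely to decouple them by bounding $\sum_u$ of exceptional masses deterministically once the map is drawn.
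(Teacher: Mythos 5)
Your overall strategy matches the paper's: a Markov bound on the Rayleigh numerator, concentration of the denominator, and a pairwise counting argument to preserve spreading. The paper's proof likewise introduces an exceptional quantity of the form $\mathcal E_I := \sum_{u,v} w(u)w(v)\|F(u)\|^2\|F(v)\|^2 I_{u,v}$ (your weighted sum of bad masses) and bounds it via Markov. But there are two places where your sketch, as written, does not close the argument.

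\textbf{The contrapositive step is miscalibrated.} After drawing $\Gamma$ and conditioning on the event that the aggregate $\sum_u w(u)\|F(u)\|^2 \cdot\mathrm{mass}(\mathrm{Bad}(u))$ is small, you cannot conclude anything about $\mathrm{mass}(\mathrm{Bad}(u_0))$ for one chosen $u_0 \in S$: the weight $w(u_0)\|F(u_0)\|^2$ may be tiny, so the per-$u_0$ bound implied by the aggregate is vacuous. Your closing sentence --- ``any $S$ ... splits as ... for any $u_0 \in S$; the first part has $F$-mass $\leq \eta\mathcal E$ ... and the second contributes at most $\Delta\eta\mathcal E$ in expectation'' --- attempts exactly this single-$u_0$ reduction, and it does not work. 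The paper argues the contrapositive globally: supposing a bad $S$ exists, it shows that for \emph{every} $u\in S$ the mass of $S\setminus B_{d_F}(u,\Delta/2)$ is at least $\delta\eta\mathcal E$, and then sums over $u\in S$. Because the bad $S$ has total mass $\gtrsim \eta\mathcal E \gtrsim \mathcal E/k$, the double sum forces $\mathcal E_I \gtrsim \delta\eta^2\mathcal E^2 \geq \tfrac{\delta}{4k^3}\mathcal E^2$, contradicting the Markov bound. The sum over $u\in S$, together with the lower bound on the mass of $S$ itself, is what produces the contradiction; a single vertex cannot.

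\textbf{The $\Gamma$-mass of norm outliers is not handled.} Property (ii) concerns $\sum_{v\in S}w(v)\|\Gamma(F(v))\|^2$, but your exceptional-pair analysis only constrains the $F$-mass of $S$. A small set of vertices $v$ with $\|\Gamma(F(v))\|^2 \gg \|F(v)\|^2$ may carry most of the $\Gamma$-mass of $V$ while having negligible $F$-mass, and nothing in your argument prevents $S$ from absorbing them. You flag that you'd ``apply \eqref{eq:prop2} to upgrade the $F$-mass bound to a $\Gamma$-mass bound,'' but \eqref{eq:prop2} gives only a probability of deviation, not a bound on the expected mass carried by the deviating vertices. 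The paper introduces the good set $U$ and, in its second claim, bounds $\E\left[\sum_{v\notin U} w(v)\|\Gamma(F(v))\|^2\right]$ using the one-sided subexponential tail \eqref{eq:tail} integrated against $\lambda$; that estimate is essential to convert between $F$-mass and $\Gamma$-mass, and without some version of it the spreading conclusion is out of reach.
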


\begin{proof}
Let $\delta = \Delta/16$.
We may assume that $k \geq 2$.
Choose $h \asymp  (1+\log{k}+\log(\frac{1}{\Delta}))/\Delta^2$ large enough such that $2e^{-\delta^2 h /12} \leq \delta^2 k^{-3}/128 $.
Let $\Gamma=\Gamma_{k,h}$.

First, observe that \eqref{eq:prop1} combined with Markov's inequality implies that the
following holds with probability at least $3/4$,
\begin{equation}\label{eq:prop1num}
\sum_{u\sim v} w(u,v) \|\Gamma(F(u)) - \Gamma(F(v))\|^2 \leq 4 \cdot \sum_{u\sim v} w(u,v) \|F(u) - F(v)\|^2\,.
\end{equation}

Now define,
$$U:=\{v\in V: \|\Gamma(F(v))\|^2 \in [(1-\delta)\|F(v)\|^2, (1+\delta)\|F(v)\|^2],$$
By \eqref{eq:prop2}, for each $v\in V$,
\begin{equation}\label{eq:pbound}
\pr\left[ v\notin U\right] \leq \delta k^{-3}/128\,.
\end{equation}
Next, we bound the amount of $\ell^2$ mass that falls outside of $U$.

 Therefore,
by Markov's inequality, with probability at least $31/32$, we have
\begin{equation}\label{eq:weightaway}
\sum_{v\notin U} w(v)\|F(v)\|^2 \leq \frac{\delta k^{-3}}{4} \sum_{v\in V} w(v)\|F(v)\|^2\,.
\end{equation}
In particular, with probability at least $31/32$, we have
\begin{equation}\label{eq:prop1denum}
\sum_{v\in V} w(v) \|\Gamma(F(v))\|^2 \geq (1-\delta) \sum_{v\in U} w(v) \|F(v)\|^2 \geq \left(1-2\delta \right) \sum_{v \in V} w(v) \|F(v)\|^2\,.
\end{equation}
Combining our estimates for \eqref{eq:prop1num} and \eqref{eq:prop1denum}, we conclude
that (i) holds with probability at least $23/32$.  Thus we can finish by showing that (ii)
holds with probability at least $25/32$.
We first consider property (ii) for subsets of $U$.

\begin{claim}\label{claim:one}
With probability at least $7/8$, the following holds:
Equation \eqref{eq:prop1denum} implies that,
for any subset $S \subseteq U$ with $\diam(S) \leq \Delta/4$, we have
$$
\sum_{v \in S} w(v) \|\Gamma(F(v))\|^2 \leq (1+6\delta)\eta \sum_{v \in V} w(v) \|\Gamma(F(v))\|^2\,.
$$
\end{claim}

\begin{proof}
For every $u,v \in V$, define the event,
$$ {\cal A}_{u,v} = \left\{d_{\Gamma(F)} (u,v) \in \left[ {d_F(u,v)} (1-\delta) - 2\delta, {d_F(u,v)} (1+\delta) + 2\delta \right] \right\}$$
and let $I_{u,v}$ be the random variable indicating that ${\cal A}_{u,v}$ does {\em not} occur.

We claim that for $u,v \in V$,
${\cal A}_{u,v}$ occurs if $u,v\in U$, and $$\left\|\Gamma\left( \frac{F(u)}{\|F(u)\|} - \frac{F(v)}{\|F(v)\|}\right) \right\|  \in [(1-\delta)d_F(u,v), (1+\delta)d_F(u,v)]\,.$$
To see this, observe that,
\begin{eqnarray*}
d_{\Gamma(F)}(u,v)&=&\left\| \frac{\Gamma(F(u))}{\|\Gamma(F(u))\|} - \frac{\Gamma(F(v))}{\|\Gamma(F(v))\|}\right\| \\
&\geq& \left\| \frac{\Gamma(F(u))}{\|F(u)\|} - \frac{\Gamma(F(v))}{\|F(v)\|}\right\| - \left\| \frac{\Gamma(F(u))}{\|\Gamma(F(u))\|}  - \frac{\Gamma(F(u))}{\|F(u)\|}\right\|
- \left\| \frac{\Gamma(F(v))}{\|\Gamma(F(v))\|} - \frac{\Gamma(F(v))}{\|F(v)\|}\right\| \\
&\geq & \left\|\Gamma\left( \frac{F(u)}{\|F(u)\|} - \frac{F(v)}{\|F(v)\|}\right) \right\| - 2\delta\\
& \geq& (1-\delta)d_F(u,v) - 2\delta,
\end{eqnarray*}
where we have used the fact that $\Gamma$ is a linear operator.
The other direction can be proved similarly.

\medskip

Therefore, by
 \eqref{eq:prop2}, and a union bound, for any $u,v\in V$, $\pr [ I _{u,v}] \leq 3 \delta k^{-3}/128$.
Let,
 $${\cal M}_I :=\sum_{u,v\in V} w(u) w(v) \| F(u)\|^2 \|F(v)\|^2 I_{u,v}\,.$$
 By linearity of expectation, and Markov's inequality, we conclude that
 \begin{equation}\label{eq:amarkov}
 \pr\left[{\cal M}_I \geq \frac{\delta}{4 k^3} \left(\sum_{v \in V} w(v) \|F(v)\|^2\right)^2\right] \leq \frac{1}{8}\,.
 \end{equation}

Now suppose there exists a subset $S \subseteq U$ with $\diam(S, d_{\Gamma(F)}) \leq \Delta/4$
and $$\sum_{v \in S} w(v) \|\Gamma(F(v))\|^2 \geq (1+6\delta) \eta \sum_{v \in V} w(v) \|\Gamma(F(v))\|^2\,.$$
Fix a vertex $u\in S$. Since for every $v \in S \setminus B_{d_F}(u, \Delta/2)$, we
have $d_F(u,v)\geq \Delta/2$,  $d_{\Gamma(F)}(u,v) \leq \Delta/4$, and recalling that
$\delta = \Delta/16$, it must be
that $I_{u,v}=1$.
On the other hand, we have
\begin{eqnarray*}
\sum_{v \in S \setminus B_{d_F}(u,\Delta/2)} w(v) \|F(v)\|^2 &\geq&
\sum_{v \in S} w(v) \|F(v)\|^2 - \sum_{v \in B_{d_F}(u, \Delta/2)} w(v) \|F(v)\|^2 \\
&\geq &
(1-\delta) \sum_{v \in S} w(v) \|\Gamma(F(v))\|^2 - \eta \sum_{v \in V} w(v) \|F(v)\|^2
\\
&\geq &
(1-\delta) (1+6\delta) \eta \sum_{v \in V} w(v) \|\Gamma(F(v))\|^2 - \eta \sum_{v \in V} w(v) \|F(v)\|^2 \\
&\overset{\eqref{eq:prop1denum}}{\geq} &
\left[(1-2\delta) (1-\delta) (1+6\delta)-1\right] \eta \sum_{v \in V} w(v) \|F(v)\|^2 \\
&\geq &
\delta \eta \sum_{v \in V} w(v) \|F(v)\|^2\,,
\end{eqnarray*}
where we have used the fact that $S \subseteq U$ and also $\diam(B_{d_F}(u,\Delta/2)) \leq \Delta$
and the fact that $F$ is $(\Delta,\eta)$-spreading.  In the final line, we have used $\delta \leq 1/16$.

Thus under our assumption on the existence of $S$ and again using $S \subseteq U$, we have
\begin{eqnarray*}
\mathcal M_I &\geq& \sum_{u \in S} w(u) \|F(u)\|^2 \sum_{v \in S \setminus B_{d_F}(u,\Delta/2)} w(v) \|F(v)\|^2 \\
&\geq &
\delta \eta \left(\sum_{v \in V} w(v) \|F(v)\|^2\right) \sum_{u \in S} w(u) \|F(u)\|^2 \\
 &\geq &
 \delta \eta \left(\sum_{v \in V} w(v) \|F(v)\|^2\right) (1-\delta) \sum_{u \in S} w(u) \|\Gamma(F(u))\|^2 \\
 &\geq &
 \delta(1-\delta) \eta \left(\sum_{v \in V} w(v) \|F(v)\|^2\right)(1+6\delta) \eta \left(\sum_{v \in V} w(v) \|\Gamma(F(v))\|^2 \right) \\
 &\overset{\eqref{eq:prop1denum}}{\geq} &
 \delta(1-\delta)(1-2\delta)(1+6\delta)\eta^2 \left(\sum_{v \in V} w(v) \|F(v)\|^2\right)^2 \\
 &\geq &
 \frac{\delta}{4k^3}  \left(\sum_{v \in V} w(v) \|F(v)\|^2\right)^2\,,
\end{eqnarray*}
where the last inequality follows from $\eta \geq 1/k$ and $\delta \leq 1/16$.
Combining this with \eqref{eq:amarkov} yields the claim.
\end{proof}

The preceding claim guarantees a spreading property for subsets $S \subseteq U$.
Finally, we need to handle points outside $U$.


\begin{claim}
With probability at least $15/16$, we have
$$
\sum_{v \notin U} w(v) \|\Gamma(F(v))\|^2 \leq \delta k^{-3} \sum_{v \in V} w(v) \|F(v)\|^2\,.
$$
\end{claim}

\begin{proof}
Let ${\cal D}_u$ be the event that $u \notin U$,
and let $H_u := \|\Gamma(F(u))\|^2 \1_{{\cal D}_u}$.  Then,
\begin{equation}
\label{eq:sumhevent} \E\left[ \sum_{u\notin U} w(u) \|\Gamma(F(u))\|^2 \right] = \sum_{u\in V} w(u) \E\left[H_u\right]\,.
\end{equation}

Now we can estimate,
\begin{equation}\label{eq:fbound}
\frac{\E[H_u]}{\|F(u)\|^2} \leq 2 \,\pr(\mathcal D_u) + \pr\left(\frac{\|\Gamma(F(u))\|^2}{\|F(u)\|^2} > 2\right) \cdot \E\left[\frac{\|\Gamma(F(u))\|^2}{\|F(u)\|^2} \,\Big|\,
\|\Gamma(F(u))\|^2 > 2\, \|F(u)\|^2 \right]\,.
\end{equation}
Using the inequality, valid for all non-negative $X$,
$$\pr(X > \lambda_0) \cdot \E[X \mid X > \lambda_0] \leq \int_{\lambda_0}^{\infty} \lambda \cdot \pr(X > \lambda)\, d\lambda\,,$$
we can bound the latter term in \eqref{eq:fbound} by,
\begin{eqnarray*}
\int_{2}^{\infty} \lambda \cdot \pr\left(\vphantom{\bigoplus} \|\Gamma(F(u))\|^2 > \lambda \|F(u)\|^2\right)\,d\lambda
\leq \int_{2}^{\infty} \lambda e^{-\lambda h/12}\,d\lambda
= \left(\frac{24}{h}+\frac{144}{h^2}\right) e^{-h/6} \leq \frac{\delta}{128 k^3}\,,
\end{eqnarray*}
where we have used \eqref{eq:tail} and the initial choice
of $h$ sufficiently large.

It follows from this, \eqref{eq:fbound}, and \eqref{eq:pbound}, that
$$
\E[H_u] \leq \frac{3 \delta}{128 k^3} \|F(u)\|^2\,.
$$

\remove{
\begin{eqnarray*}
\frac{\E[H_u]} &=& \pr [D_u] \E\left[\|\Gamma(F(u))\|^2 ~|~ D_u \right] \\
&\leq & \pr \left[ \|\Gamma(F(u))\|^2 \leq (1-\delta)\|F(u)\|^2\right] \|F(u)\|^2 \\
&& +\pr \left[ (1+\delta)\|F(u)\|^2 \leq \|\Gamma(F(u))\|^2 \right] \E \left[\|\Gamma(F(u))\|^2 ~|~  \|\Gamma(F(u))\|^2 \leq 2\|F(u)\|^2 \right] \\
&& +\sum_{i=1}^\infty \pr \left[ 2^i\|F(u)\|^2 \leq \|\Gamma(F(u))\|^2 \right] \E \left[ \|\Gamma(F(u))\|^2 ~|~  \|\Gamma(F(u))\|^2 \leq 2^{i+1}\|F(u)\|^2 \right]\\
&\leq & \frac{\delta}{128k^3} \left\{ \|F(u)\|^2 +\sum_{i=0}^\infty 2^{i+1} \|F(u)\|^2 k^{2^{i}-1}\right\} \leq \frac{\delta}{16k^3} \|F(u)\|^2

\end{eqnarray*}
}
Therefore, by Markov's inequality,
$$ \pr\left[ \sum_{v\notin U} w(v)\|\Gamma(F(v))\|^2 > \delta k^{-3}\sum_{v\in V}w(v) \|F(v)\|^2\right] \leq \frac{3}{128}\,,$$
completing the proof.
\end{proof}

To conclude the proof of the lemma, we need to verify that (ii) holds with probability at least $25/32$.
But observe that if \eqref{eq:prop1denum} holds, then
the  conclusion of the preceding claim is,
$$
\sum_{v \notin U} w(v) \|\Gamma(F(v))\|^2 \leq \delta k^{-3} \sum_{v \in V} w(v) \|F(v)\|^2 \leq 2 \delta k^{-3} \sum_{v \in V} w(v) \|\Gamma(F(v))\|^2
\leq \delta \eta \sum_{v \in V} w(v) \|\Gamma(F(v))\|^2\,.
$$
Combining this with Claim \ref{claim:one} shows that with probability at least $25/32$,
$\Gamma \circ F$ is $(\Delta/4, (1+7\delta)\eta)$-spreading,
completing the proof.
\end{proof}

As an application of the preceding lemma, observe that we can improve
\eqref{eq:c1} in Theorem \ref{thm:ksets} to the following bound, which is sometimes stronger,
using the essentially same proof, but first obtaining a spreading representation $F :V \to \mathbb R^{O(\delta^{-2} \log k)}$
using Lemma \ref{lem:dimreduce}.

\begin{theorem}\label{thm:betterfewsets}
For any weighted graph $G=(V,E,w)$ and $\delta > 0$ the following holds.
For every $k \in \mathbb N$,
there exist
$r \geq \lceil (1-\delta)k\rceil$ disjointly supported functions $\psi_1, \psi_2, \ldots, \psi_r : V \to \mathbb R$ such that
\begin{equation}\label{eq:f1}
\mathcal R_G(\psi_i) \lesssim  \delta^{-7} \log^2(k+1) \lambda_k\,.
\end{equation}
where $\lambda_k$ is the $k$th smallest eigenvalue of $\mathcal L_G$.
\end{theorem}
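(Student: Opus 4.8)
The plan is to combine the dimension-reduction lemma (Lemma~\ref{lem:dimreduce}) with the machinery of Section~\ref{sec:randompart}, exactly mirroring the proof of Theorem~\ref{thm:ksets}, except that we partition in $O(\delta^{-2}\log k)$ dimensions rather than in $k$ dimensions. First I would take an $\ell^2(V,w)$-orthonormal system of eigenfunctions $f_1,\ldots,f_k$ for the first $k$ eigenvalues of $\mathcal L_G$ and form $F : V \to \mathbb R^k$, $F(v)=(f_1(v),\ldots,f_k(v))$, so that $\mathcal R_G(F)\leq \lambda_k$ and, by Lemma~\ref{lem:spread}, $F$ is $(\Delta_0,\tfrac{1}{k}(1-\Delta_0^2)^{-1})$-spreading for every $\Delta_0$. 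Choosing $\Delta_0\asymp 1$ (a small absolute constant) gives that $F$ is $(\Delta_0,\eta)$-spreading with $\eta=\tfrac{c}{k}$ for a constant $c$ slightly larger than $1$.

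Next I would apply Lemma~\ref{lem:dimreduce}: for $h\lesssim (1+\log k)/\Delta_0^2 \asymp \log k$, with probability at least $1/2$ the random map $\Gamma=\Gamma_{k,h}$ produces $F'\defeq \Gamma\circ F : V\to\mathbb R^h$ with $\mathcal R_G(F')\leq 8\,\mathcal R_G(F)\leq 8\lambda_k$ and with $F'$ being $(\Delta_0/4,(1+\Delta_0)\eta)$-spreading. After rescaling constants we may assume $F'$ is $(\Delta_1,\tfrac{1}{k}+\tfrac{\delta}{48k})$-spreading for some absolute constant $\Delta_1$, provided $k$ is large enough relative to the constant $c$ above; for $k$ below that threshold the bound \eqref{eq:f1} is dominated by the constant and follows from Theorem~\ref{thm:ksets}, so we may assume $k$ large. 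Now $d_{F'}$ is a Euclidean metric on a subset of $\mathbb R^h$ with $h\lesssim \log k$, so Theorem~\ref{thm:rkpad} supplies a $(\Delta_1,\alpha,1-\tfrac{\delta}{24})$-padded random partition of $d_{F'}$ with $\alpha \asymp h/\delta \asymp (\log k)/\delta$. Since $\hat d_{F'}\geq d_{F'}$, the balls in the stronger metric are contained in the balls in $d_{F'}$, so both hypotheses of Corollary~\ref{cor:fewsets} are met; it yields $r\geq\lceil(1-\delta)k\rceil$ disjointly supported $\psi_1,\ldots,\psi_r : V\to\mathbb R$ with
$$
\mathcal R_G(\psi_i)\lesssim \frac{\alpha^2}{\delta\,\Delta_1^2}\,\mathcal R_G(F') \lesssim \frac{(\log k)^2/\delta^2}{\delta}\cdot 8\lambda_k \lesssim \delta^{-3}\log^2 k \cdot\lambda_k.
$$

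The discrepancy between this $\delta^{-3}$ and the claimed $\delta^{-7}$ comes from the fact that, to make Lemma~\ref{lem:dimreduce} output a map whose spreading slack is $O(\delta/k)$ rather than $O(1/k)$, one must start from $\Delta_0\asymp\sqrt\delta$ (so that $(1-\Delta_0^2)^{-1}\leq 1+\delta/48$ in Lemma~\ref{lem:spread} and the multiplicative $(1+\Delta_0)$ loss in Lemma~\ref{lem:dimreduce} is also $O(\delta)$-controlled), which in turn forces $h\lesssim \delta^{-2}\log k$ and hence $\alpha\asymp h/\delta\asymp \delta^{-3}\log k$, giving $\mathcal R_G(\psi_i)\lesssim \alpha^2/(\delta\Delta_0^2)\cdot\lambda_k \asymp \delta^{-6}\cdot\delta^{-1}\log^2 k\cdot\lambda_k=\delta^{-7}\log^2 k\cdot\lambda_k$; absorbing the $\log^2 k$ into $\log^2(k+1)$ covers small $k$ uniformly. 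The main obstacle is precisely this bookkeeping of constants: one must track how the required spreading tolerance $\delta$ propagates through (a) the choice of $\Delta$ in Lemma~\ref{lem:spread}, (b) the $(1+\Delta)$ multiplicative loss and the dimension $h\lesssim\Delta^{-2}\log k$ in Lemma~\ref{lem:dimreduce}, and (c) the padding parameter $\alpha\asymp h/\delta$ fed into Corollary~\ref{cor:fewsets}, and verify that Lemma~\ref{lem:dimreduce}'s hypothesis $\eta\geq 1/k$ and $\Delta\in[0,1]$ remain satisfied throughout. Once the constants line up, the conclusion is immediate from Corollary~\ref{cor:fewsets}.
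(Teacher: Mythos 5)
Your high-level approach is exactly the paper's: reduce from $\mathbb R^k$ to $\mathbb R^h$ with $h = O(\delta^{-2}\log k)$ via Lemma~\ref{lem:dimreduce}, then partition $d_{\Gamma\circ F}$ via Theorem~\ref{thm:rkpad} and feed the result into Corollary~\ref{cor:fewsets}. The gap is in how you track the spreading parameter $\Delta_0$, and it is not merely cosmetic.

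Your first attempt, with $\Delta_0\asymp 1$, does not satisfy the hypothesis of Corollary~\ref{cor:fewsets}. Lemma~\ref{lem:dimreduce} degrades the spreading constant multiplicatively by $(1+\Delta_0)$, so starting from $\eta = \frac{1}{k(1-\Delta_0^2)}$ you land at $\eta' = \frac{1+\Delta_0}{k(1-\Delta_0^2)} = \frac{1}{k(1-\Delta_0)}$. If $\Delta_0$ is a fixed constant this is $\frac{c}{k}$ with $c>1$ a constant, whereas the corollary demands $\eta' \leq \frac{1}{k}\left(1+\frac{\delta}{48}\right)$; the slack must shrink like $\delta$, and ``take $k$ large'' does not help since the constraint involves only $\delta$. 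Your fallback to Theorem~\ref{thm:ksets} for small $k$ therefore doesn't close this hole.

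Your second attempt, with $\Delta_0\asymp\sqrt\delta$, has the same flaw: you assert that $(1+\Delta_0)$ is then ``$O(\delta)$-controlled,'' but $1+\sqrt\delta = 1+\Theta(\sqrt\delta)$, not $1+O(\delta)$. Solving $\frac{1}{1-\Delta_0}\leq 1+\frac{\delta}{48}$ forces $\Delta_0\lesssim\delta$, which is exactly the choice the paper makes (``Choose $\Delta\asymp\delta$ so that $(1-16\Delta^2)^{-1}(1+4\Delta)\leq 1+\delta/48$''). Your arithmetic is also internally inconsistent: $\Delta_0\asymp\sqrt\delta$ gives $h\lesssim(\log k)/\Delta_0^2\asymp\delta^{-1}\log k$, not the $\delta^{-2}\log k$ you write, and in your final display $\alpha^2/(\delta\Delta_0^2) = \delta^{-6}\cdot\delta^{-1}\log^2 k$ the factor $\delta^{-1}$ for $\delta\Delta_0^2$ would require $\Delta_0=1$, contradicting both of your proposed choices. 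For what it's worth, plugging the paper's own $\Delta\asymp\delta$, $h\asymp\delta^{-2}\log k$, $\alpha\asymp h/\delta$ into $\alpha^2/(\delta\Delta^2)$ yields $\delta^{-9}\log^2 k$ rather than the stated $\delta^{-7}$, so the $\delta$-exponent here seems to require more than a direct substitution; but the content of the theorem is the $\log^2 k$, and that part of your argument, once $\Delta_0\asymp\delta$ is fixed, is correct.
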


\begin{proof}
Let $f_1, f_2, \ldots, f_k : V \to \mathbb R$ be an $\ell^2(V,w)$-orthonormal
system of eigenfunctions corresponding to the first $k$ eigenvalues of $\mathcal L_G$,
and define $F : V \to \mathbb R^k$ by $F(v)=(f_1(v), f_2(v), \ldots, f_k(v))$.

We may clearly assume that $\delta \geq \frac{1}{2k}$.
Choose $\Delta \asymp \delta$ so that $(1-16\Delta^2)^{-1} (1+4\Delta) \leq 1+\frac{\delta}{48}$.
In this case, for some choice of
$$
h \lesssim \frac{1+\log(k)+\log\left(\tfrac{1}{\Delta}\right)}{\Delta^2} \lesssim \frac{O(\log k)}{\delta^2}\,,
$$
with probability at least $1/2$, $\Gamma_{k,h}$ satisfies the conclusions
of Lemma \ref{lem:dimreduce}.  Assume that $\Gamma : \mathbb R^k \to \mathbb R^h$
is some map satisfying these conclusions.

Then combining (ii) from Lemma \ref{lem:dimreduce} with Lemma \ref{lem:spread}, we see that $\Gamma \circ F : V \to \mathbb R^h$ is
$(\Delta, \frac{1}{k} + \frac{\delta}{48k})$-spreading.
Now we finish as in the proof of Theorem \ref{thm:ksets}, using the fact that $h = O(\delta^{-2} \log k)$.
\end{proof}

\subsection{A multi-way Cheeger inequality}
\label{sec:mwcheeger}

Note that Theorem \ref{thm:betterfewsets} combined with Lemma \ref{lem:cheeger} is still not strong enough to
prove Theorem \ref{thm:optfewsets}.  To do that, we need to combine Lemma \ref{lem:dimreduce}
with a strong Cheeger inequality for Lipschitz partitions.

Let $G=(V,E,w)$ be a weighted graph, and $F : V \to \mathbb R^h$.
Set $M = \max \{ \|F(v)\|^2 : v \in V \}$.
Let $\tau \in (0,M)$ be chosen uniformly at random, and for any subset $S \subseteq V$, define
$$\hat S = \{ v \in S : \|F(v)\|^2 \geq \tau \}\,.$$

\begin{lemma}\label{lem:mwcheeger}
For every $\Delta > 0$,
there exists a partition $V = S_1 \cup S_2 \cup \cdots \cup S_m$ such that
for every $i \in [m]$, $\diam(S_i, d_F) \leq \Delta$, and
\begin{equation}\label{eq:mwcheeger}
\frac{\E\left[w(E(\hat S_1,\overline{\hat S_1})) + w(E(\hat S_2,\overline{\hat S_2})) + \cdots + w(E(\hat S_m,\overline{\hat S_m}))\right]}{\E\left[w(\hat S_1) + \cdots + w(\hat
S_m)\right]} \lesssim
\frac{\sqrt{h}}{\Delta} \sqrt{{\cal R}_G(F)}\,.
\end{equation}
\end{lemma}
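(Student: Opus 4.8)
The plan is to run a random Lipschitz partition of the radial projection distance $d_F$ \emph{simultaneously} with the Cheeger threshold $\tau$, bound the expectation (over both sources of randomness) of the numerator of \eqref{eq:mwcheeger} against a fixed denominator, and then average to extract one good partition. We may assume $\Delta \leq 2$: the points $\{F(v)/\|F(v)\|\}$ lie on the unit sphere $S^{h-1}$, so $(V,d_F)$ has diameter at most $2$, and for $\Delta \geq 2$ the trivial one-part partition reduces \eqref{eq:mwcheeger} to the ordinary Cheeger estimate of Lemma~\ref{lem:cheeger}. For $\Delta \leq 2$, apply Theorem~\ref{thm:RkLip} to the point set $X = \{F(v)/\|F(v)\| : F(v) \neq 0\} \subseteq \mathbb R^h$ (on which the Euclidean metric agrees with $d_F$) to obtain a $(\Delta, O(\sqrt h))$-Lipschitz random partition $\mathcal P$: it is $\Delta$-bounded in $d_F$, and $\pr[\mathcal P(u) \neq \mathcal P(v)] \leq O(\sqrt h)\, d_F(u,v)/\Delta$ for all $u,v$. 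Put every vertex with $F(v) = 0$ into one extra part (diameter $0$; such vertices never survive any threshold), and take $\mathcal P$ independent of $\tau$.

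Next comes the edge accounting. Fix a partition $P = \{S_1, \ldots, S_m\}$ and a value of $\tau$. A routine case check (on whether an edge is within a part or across parts, and how many of its endpoints survive the threshold) shows that each edge $\{u,v\}$ contributes at most $2w(u,v)$ to $\sum_i w(E(\hat S_i, \overline{\hat S_i}))$, and contributes a positive amount only if either ($\mathcal P(u) \neq \mathcal P(v)$ and $\max(\|F(u)\|^2,\|F(v)\|^2) \geq \tau$) or $\tau$ lies strictly between $\|F(u)\|^2$ and $\|F(v)\|^2$. Hence
\[
\sum_i w(E(\hat S_i,\overline{\hat S_i})) \leq 2\sum_{u \sim v} w(u,v)\, \mathbf{1}\{\mathcal P(u)\neq\mathcal P(v)\}\,\mathbf{1}\{\max(\|F(u)\|^2,\|F(v)\|^2)\geq\tau\} + \sum_{u \sim v} w(u,v)\,\mathbf{1}\{\tau \text{ between } \|F(u)\|^2, \|F(v)\|^2\}.
\]
Write $\mathcal E := \sum_{v\in V} w(v)\|F(v)\|^2$ and $M = \max_v \|F(v)\|^2$. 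Since $\bigcup_i \hat S_i = \{v : \|F(v)\|^2 \geq \tau\}$ for \emph{every} partition, the denominator is partition-independent: $\E_\tau[\sum_i w(\hat S_i)] = \sum_v w(v)\,\pr_\tau[\|F(v)\|^2 \geq \tau] = \mathcal E/M$.

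Now take $\E_{\mathcal P, \tau}$ of the numerator bound and use independence. The second sum becomes $\frac1M \sum_{u\sim v} w(u,v)\,\big|\|F(u)\|^2 - \|F(v)\|^2\big| \leq \frac{\sqrt2}{M}\sqrt{\mathcal R_G(F)}\,\mathcal E$ exactly as in the proof of Lemma~\ref{lem:cheeger} (factor $\|F(u)\pm F(v)\|$, Cauchy--Schwarz, and $\sum_{u\sim v} w(u,v)\|F(u)+F(v)\|^2 \leq 2\mathcal E$). For the first sum, $\pr_\tau[\max(\|F(u)\|^2,\|F(v)\|^2)\geq\tau] = \max(\|F(u)\|^2,\|F(v)\|^2)/M$ and $\pr_{\mathcal P}[\mathcal P(u)\neq\mathcal P(v)] \leq O(\sqrt h)\,d_F(u,v)/\Delta$, so the first sum is at most $\frac{O(\sqrt h)}{\Delta M}\sum_{u\sim v} w(u,v)\, d_F(u,v)\max(\|F(u)\|^2,\|F(v)\|^2)$. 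The key move is to pair the edge length $d_F(u,v)$ with the \emph{larger} of the two norms: applying Lemma~\ref{lem:dF} with the larger-norm endpoint in the role of its ``$u$'' gives $d_F(u,v)\max(\|F(u)\|,\|F(v)\|) \leq 2\|F(u)-F(v)\|$, so the summand is $\leq 2\|F(u)-F(v)\|\max(\|F(u)\|,\|F(v)\|)$. Cauchy--Schwarz then bounds the sum by $2\sqrt{\mathcal R_G(F)\,\mathcal E}\cdot\sqrt{\sum_{u\sim v} w(u,v)\max(\|F(u)\|^2,\|F(v)\|^2)}$, and $\sum_{u\sim v} w(u,v)\max(\|F(u)\|^2,\|F(v)\|^2) \leq \sum_{u\sim v} w(u,v)(\|F(u)\|^2+\|F(v)\|^2) = \mathcal E$. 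Thus the first sum is $\lesssim \frac{\sqrt h}{\Delta}\sqrt{\mathcal R_G(F)}\,\mathcal E/M$, and together with the straddle term, $\E_{\mathcal P,\tau}[\text{numerator}] \lesssim \big(\tfrac{\sqrt h}{\Delta}+1\big)\sqrt{\mathcal R_G(F)}\,\mathcal E/M \lesssim \tfrac{\sqrt h}{\Delta}\sqrt{\mathcal R_G(F)}\,\mathcal E/M$, since $\sqrt h/\Delta \geq \tfrac12$ when $\Delta \leq 2$ and $h \geq 1$. Dividing by the (partition-independent) denominator $\mathcal E/M$ and invoking linearity of expectation, there is a partition $P$ in the support of $\mathcal P$ — hence $\Delta$-bounded in $d_F$ — for which \eqref{eq:mwcheeger} holds with the expectation now only over $\tau$.

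The main obstacle is not any single estimate but the combinatorial bookkeeping of which edges the thresholded partition cuts, together with the realization that one must attach the edge length $d_F(u,v)$ to $\max(\|F(u)\|,\|F(v)\|)$ so that Lemma~\ref{lem:dF} can be applied and the ensuing Cauchy--Schwarz closes against $\sum_{u\sim v} w(u,v)\max(\|F(u)\|^2,\|F(v)\|^2) \leq \mathcal E$; the other point to watch is that the denominator is the same for all partitions, which is precisely what lets us pull the existential quantifier out of the expectation.
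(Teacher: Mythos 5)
Your proposal is correct and follows essentially the same approach as the paper's proof: apply the $(\Delta,O(\sqrt{h}))$-Lipschitz random partition from Theorem~\ref{thm:RkLip} to the radially projected points, bound the expected cut edge-by-edge using independence of the partition and the threshold $\tau$, invoke Lemma~\ref{lem:dF} to convert $d_F(u,v)$ times the relevant norm into $\|F(u)-F(v)\|$, close with Cauchy--Schwarz, and extract a single good partition via linearity of expectation together with the observation that the denominator is partition-independent. The only cosmetic differences are that you keep $M$ explicit rather than normalizing $M=1$, track the factor of $2$ from an edge crossing two parts explicitly, and use $\max(\|F(u)\|,\|F(v)\|)$ where the paper uses the union bound $\|F(u)\|^2+\|F(v)\|^2$ and then factors as $(\|F(u)\|+\|F(v)\|)(\cdots)$; these differ only in constants absorbed by $\lesssim$.
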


\begin{proof}
Since the statement of the lemma is homogeneous in $F$, we may assume that $M=1$.
By Theorem \ref{thm:RkLip}, there exists an $\Delta$-bounded random partition $\mathcal P$ satisfying,
for every $u,v \in V$,
\begin{equation}\label{eq:lipschitz}
\pr(\mathcal P(u) \neq \mathcal P(v)) \lesssim \frac{\sqrt{h}}{\Delta} \cdot d_F(u,v)\,.
\end{equation}
Let $\mathcal P = S_1 \cup S_2 \cup \cdots \cup S_m$, where we recall that $m$ is a random number.

First, observe that,
$
\E [ w(\hat S_i) ] = \sum_{v \in S_i} w(v)\|F(v)\|^2,
$
thus,
\begin{equation}\label{eq:denom}
\E\left[w(\hat S_1) + \cdots + w(\hat S_m)\right] =  \sum_{v \in V} w(v)\|F(v)\|^2\,.
\end{equation}

Next, if $\{u,v\} \in E$ with $\|F(u)\|^2 \leq \|F(v)\|^2$, then we have
\begin{eqnarray*}
&& \!\!\!\!\!\!\!\!\!\!\!\!\!\!\!\!\!\!\!\!\!\!\!\!
\pr \left[\{u,v\} \in E(\hat S_1,\overline{\hat S_1}) \cup \cdots \cup E(\hat S_m,\overline{\hat S_m})\right] \\
&& \leq \pr\left[\mathcal P(u) \neq \mathcal P(v)\right] \cdot \pr\left[\|F(u)\|^2 \geq \tau \textrm{ or } \|F(v)\|^2 \geq \tau \mid \mathcal P(u) \neq \mathcal P(v) \right] \\
&& \,\,\,\,\,\, + \,\, \pr\left[\vphantom{\bigoplus} \tau \in [\|F(u)\|^2, \|F(v)\|^2] \,\big|\, \mathcal P(u) = \mathcal P(v)\right] \\
&&
\lesssim
\frac{\sqrt{h}}{\Delta} \cdot  d_F(u,v) \left(\|F(u)\|^2 + \|F(v)\|^2\right) + \|F(v)\|^2 - \|F(u)\|^2 \\
&&
\leq
\left(\|F(u)\| + \|F(v)\|\right) \left(\frac{\sqrt{h}}{\Delta} \cdot d_F(u,v) (\|F(u)\| + \|F(v)\|) + \|F(v)\| - \|F(u)\| \right) \\
&& \leq
\frac{5 \sqrt{h}}{\Delta}  \left(\|F(u)\| + \|F(v)\|\right) \|F(u)-F(v)\|,
\end{eqnarray*}
where in the final line we have used Lemma \ref{lem:dF}.

Thus, we can use Cauchy-Schwarz to write,
\begin{eqnarray*}
\E\left[w(E(\hat S_1,\overline{\hat S_1})) + \cdots + w(E(\hat S_m,\overline{\hat S_m}))\right] &\lesssim &
\frac{\sqrt{h}}{\Delta} \sum_{u \sim v} w(u,v)(\|F(u)\|+\|F(v)\|) \|F(u)-F(v)\| \\
&\leq &
\frac{\sqrt{h}}{\Delta}
\sqrt{\sum_{u \sim v} w(u,v)(\|F(u)\|+\|F(v)\|)^2} \\
&& ~~\cdot~\sqrt{\sum_{u \sim v} w(u,v)\|F(u)-F(v)\|^2} \\
&\leq &
\frac{\sqrt{h}}{\Delta}
\sqrt{2 \sum_{v \in V} w(v) \|F(v)\|^2} \sqrt{\sum_{u \sim v} w(u,v)\|F(u)-F(v)\|^2}\,.
\end{eqnarray*}
Combining this with \eqref{eq:denom} yields,
$$
\frac{\E_{\mathcal P} \E\left[w(E(\hat S_1,\overline{\hat S_1})) +  \cdots + w(E(\hat S_m,\overline{\hat S_m}))\right]}{\E_{\mathcal P}\E\left[w(\hat S_1) + \cdots + w(\hat S_m)\right]} \lesssim
\frac{\sqrt{h}}{\Delta} \sqrt{\frac{\sum_{u \sim v}w(u,v) \|F(u)-F(v)\|^2}{\sum_{v \in V} w(v)\|F(v)\|^2}}\,,
$$
where we use $\E_{\mathcal P}$ to denote expectation over the random choice of $\mathcal P$.  In particular, there must exist
a single partition $P$ satisfying the statement of the lemma.
\end{proof}

We can use the preceding theorem to find many non-expanding sets, assuming that $F : V \to \mathbb R^h$ has
sufficiently good spreading properties.

\begin{lemma}\label{lem:mwcheegerapp}
Let $G=(V,E,w)$ be a weighted graph and let $k \in \mathbb N$ and $\delta \in (0,1)$ be given.
If the map $F : V \to \mathbb R^h$ is $(\Delta,\frac{1}{k}+\frac{\delta}{4k})$-spreading,
then there exist $r \geq \lceil (1-\delta)k\rceil$ disjoint sets $T^*_1, T^*_2, \ldots, T^*_r$, such that
$$
\phi_G(T^*_i) \lesssim \frac{\sqrt{h}}{\delta \Delta} \sqrt{\mathcal R_G(F)}\,.
$$
\end{lemma}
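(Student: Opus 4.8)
The plan is to run the multi-way Cheeger rounding of Lemma~\ref{lem:mwcheeger} and then use the spreading hypothesis for two purposes at once: to guarantee that the partition it produces has close to $k$ pieces, and to discard the few pieces whose random threshold ends up expanding too much.

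First I would normalize. Since $d_F$, $\mathcal R_G(F)$, and the spreading property are all unchanged under $F \mapsto cF$ (the radial projection distance is scale-invariant), we may assume $\max_{v} \|F(v)\|^2 = 1$. Applying Lemma~\ref{lem:mwcheeger} with the given $\Delta$ then yields a fixed partition $V = S_1 \cup \cdots \cup S_m$ with $\diam(S_i,d_F) \le \Delta$ for all $i$ and
$$
\frac{\E_\tau\!\left[\textstyle\sum_{i} w(E(\hat S_i,\overline{\hat S_i}))\right]}{\E_\tau\!\left[\textstyle\sum_{i} w(\hat S_i)\right]} \;\lesssim\; \frac{\sqrt h}{\Delta}\sqrt{\mathcal R_G(F)} \;=:\; \rho\,,
$$
where $\hat S_i = \{v \in S_i : \|F(v)\|^2 \ge \tau\}$ and $\tau$ is uniform on $(0,1)$. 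Write $\mathcal E = \sum_v w(v)\|F(v)\|^2$, $a_i = \E_\tau[w(E(\hat S_i,\overline{\hat S_i}))]$, and $b_i = \E_\tau[w(\hat S_i)] = \sum_{v \in S_i} w(v)\|F(v)\|^2$. Since $\{S_i\}$ partitions $V$ we have $\sum_i b_i = \mathcal E$, so the displayed bound reads $\sum_i a_i \le \rho\,\mathcal E$.

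Next, because $\diam(S_i,d_F) \le \Delta$, the $(\Delta,\tfrac1k + \tfrac{\delta}{4k})$-spreading property gives the per-piece cap $b_i \le (\tfrac1k + \tfrac{\delta}{4k})\mathcal E$. Fix $C = 2/\delta$ and call a piece $S_i$ \emph{good} if $b_i > 0$ and $a_i / b_i \le C\rho$. A Markov-type estimate (from $C\rho\sum_{i:\,a_i/b_i>C\rho} b_i \le \sum_i a_i \le \rho\,\mathcal E$, the case $\rho=0$ being trivial) shows that the pieces with $a_i/b_i > C\rho$ carry total mass at most $\mathcal E/C$, so the good pieces carry more than $(1-\tfrac1C)\mathcal E$; dividing by the cap, their number $r$ satisfies
$$
r \;\ge\; \frac{(1-\tfrac1C)\mathcal E}{(\tfrac1k + \tfrac{\delta}{4k})\mathcal E} \;=\; \frac{(1-\delta/2)k}{1+\delta/4} \;\ge\; (1-\delta)k\,,
$$
the last step being $(1-\delta)(1+\delta/4) = 1 - \tfrac34\delta - \tfrac14\delta^2 \le 1 - \tfrac12\delta$; hence $r \ge \lceil(1-\delta)k\rceil$. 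Inside each good piece I would then run a one-dimensional Cheeger sweep: since $b_i > 0$, the identity $\E_\tau[w(E(\hat S_i,\overline{\hat S_i})) - (a_i/b_i)\,w(\hat S_i)] = 0$ — its integrand vanishing whenever $w(\hat S_i) = 0$ — produces a threshold $\tau_i$ with $w(\hat S_i) > 0$ and $w(E(\hat S_i,\overline{\hat S_i})) \le (a_i/b_i)\,w(\hat S_i)$. Setting $T^*_i = \{v \in S_i : \|F(v)\|^2 \ge \tau_i\}$ gives $\phi_G(T^*_i) \le a_i/b_i \le C\rho \lesssim \frac{\sqrt h}{\delta\Delta}\sqrt{\mathcal R_G(F)}$, and the $T^*_i$ are pairwise disjoint since $T^*_i \subseteq S_i$ and the $S_i$ are disjoint.

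The one subtlety worth flagging is that Lemma~\ref{lem:mwcheeger} fixes a single global threshold $\tau$, whereas the final sweep uses a different threshold $\tau_i$ in each piece. This causes no trouble because $b_i = \E_\tau[w(\hat S_i)]$ is a deterministic quantity (the averaging over $\tau$ only sees the norms $\|F(v)\|$), so the averaged inequality decouples into the per-piece relations $a_i = (a_i/b_i)\,b_i$, which are rounded independently. I do not expect a genuine obstacle; the only mildly delicate point is choosing $C \asymp 1/\delta$ large enough that the piece count survives the $1/\delta$ loss while keeping the expansion bound at $O(\sqrt h/(\delta\Delta))\sqrt{\mathcal R_G(F)}$ — everything else is routine constant-chasing.
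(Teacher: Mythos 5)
Your proof is correct, and it takes a mildly different route from the paper at the accounting step. The paper first merges the partition cells $S_i$ into super-cells $T_1,\dots,T_{r'}$ of mass pinned to $\Theta(\mathcal E/k)$ (using the spreading cap), then sorts the $T_j$ by $\E_\tau[w(E(\hat T_j,\overline{\hat T_j}))]$ and keeps the smallest $\lceil(1-\delta)k\rceil$, arguing that their average is small because the total is $\lesssim \rho\,\mathcal E$. You instead skip the merging entirely: you apply a Markov bound to the per-cell ratios $a_i/b_i$, deduce that cells with ratio above $(2/\delta)\rho$ carry mass at most $\delta\mathcal E/2$, and then convert the surviving mass into a count of cells using only the per-cell mass cap $b_i\le(\tfrac1k+\tfrac{\delta}{4k})\mathcal E$. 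This sidesteps the bin-packing bookkeeping needed to make the merged pieces have the two-sided mass bound, at the cost of not producing a partition (which is not required here anyway). The final per-piece sweep (choosing a separate threshold $\tau_i$ for each cell, and your observation that the integrand vanishes on $\{w(\hat S_i)=0\}$ so one can land in $\{w(\hat S_i)>0\}$) is the same mechanism the paper uses implicitly; you spell it out more carefully. The resulting constants are of the same order, and both give $r\ge\lceil(1-\delta)k\rceil$ disjoint sets with $\phi_G(T_i^*)\lesssim\frac{\sqrt h}{\delta\Delta}\sqrt{\mathcal R_G(F)}$.
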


\begin{proof}
Since $\lceil (1-\delta)k\rceil \leq k$,
we may assume that
\begin{equation}\label{eq:deltabound}
\frac1{k} \leq \delta + \frac{\delta^2}{2}\,.
\end{equation}
Let $V = S_1 \cup S_2 \cup \cdots \cup S_m$ be the partition guaranteed by applying Lemma \ref{lem:mwcheeger} to the mapping $F : V \to \mathbb R^h$.
Set $\mathcal M \defeq \sum_{v \in V} w(v) \|F(v)\|^2\,.$
Since $F$ is $(\Delta,\frac{1}{k}+\frac{\delta}{4k})$-spreading and each $S_i$ satisfies $\diam(S_i, d_F) \leq \Delta$,
we can form $r' \geq \lceil (1-\delta/2)k\rceil$ sets $T_1, T_2, \ldots, T_{r'}$ by taking disjoint unions
of the sets $\{S_i\}$ so that for each $i=1,2,\ldots,r'$, we have
\begin{equation}\label{eq:Teq}
\frac{\mathcal M}{4k} \leq \sum_{v \in T_i} w(v) \|F(v)\|^2 \leq \frac{\mathcal M}{k} \left(1+\frac{\delta}{4}\right)\,.
\end{equation}

To see this, suppose we start with the family $\{S_i\}$ and iteratively merge the two sets for which $\sum_{v \in S_i} w(v) \|F(v)\|^2$ is smallest
subject to the constraint that no set has a sum which exceeds $\frac{\mathcal M}{k} \left(1+\frac{\delta}{4}\right)$.
At the end of this process, let $T_1, T_2, \ldots, T_{r'}$ represent the sets constructed that satisfy \eqref{eq:Teq}.
We will have
$$
\sum_{i=1}^{r'} \sum_{v \in T_i} w(v) \|F(v)\|^2 > \mathcal M \left(1-\frac{1}{4k}\right)\,.
$$
Therefore,
$$
r' > k\frac{1-\frac{1}{4k}}{1+\frac{\delta}{4}} \geq k\frac{1-\frac{\delta}{4}-\frac{\delta^2}{8}}{1+\frac{\delta}{4}} \geq k\left(1-\frac{\delta}{2}\right)\,,
$$
where in the second inequality we have used \eqref{eq:deltabound}.

In particular, $\E[w(\hat T_i)] = \sum_{v \in T_i} w(v) \|F(v)\|^2 \in [\frac{1}{4} \frac{\mathcal M}{k}, (1+\frac{\delta}{4}) \frac{\mathcal M}{k}]$.

\medskip

Order the sets so that $\E[w(E(\hat T_i,\overline{\hat T_i}))] \leq \E[w(E(\hat T_{i+1},\overline{\hat T_{i+1}}))]$ for $i=1,2,\ldots,r'-1$, and
let $r = \lceil (1-\delta)k \rceil$.  Then from \eqref{eq:mwcheeger}, it must be that each $i=1,2,\ldots, r$ satisfies
\begin{eqnarray*}
\E[w(E(\hat T_i,\overline{\hat T_i}))] &\lesssim& \frac{1}{\delta k} \E \left[\sum_{j=1}^m w(E(\hat S_j,\overline{\hat S_j}))\right] \\
&\lesssim& \frac{\sqrt{h}}{\delta k \cdot \Delta} \cdot \sqrt{\mathcal R_G(F)} \cdot \mathbb E\left[\sum_{j=1}^m w(\hat S_j)\right]
\lesssim \frac{\sqrt{h}}{\delta k \cdot \Delta} \cdot \sqrt{\mathcal R_G(F)} \cdot \mathcal M\,.
\end{eqnarray*}
But $\E[w(\hat T_i)] \asymp \mathcal M/k$ for each $i=1,2,\ldots, r$, showing that
$$
\frac{\E[w(E(\hat T_i,\overline{\hat T_i}))]}{\E[w(\hat T_i)]} \lesssim \frac{\sqrt{h}}{\delta \Delta} \cdot \sqrt{\mathcal R_G(F)}\,.
$$
\end{proof}

We can already use this to improve \eqref{eq:tobeimproved} in Theorem \ref{thm:fewpieces}.

\begin{theorem}\label{thm:fewpiecesbetter}
For every $\delta \in (0,1)$ and any weighted graph $G=(V,E,w)$, there exist $r \geq \lceil (1-\delta)k\rceil$ disjoint, non-empty sets
$S_1, S_2, \ldots, S_r \subseteq V$ such that,
\begin{equation}\label{eq:improvedbound}
\phi_G(S_i) \lesssim  \frac{\sqrt{k}}{\delta^{3/2}} \sqrt{\lambda_k}\,.
\end{equation}
where $\lambda_k$ is the $k$th smallest eigenvalue of $\mathcal L_G$.
\end{theorem}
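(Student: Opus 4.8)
The strategy is to feed the spectral embedding directly into Lemma~\ref{lem:mwcheegerapp}, with the ambient dimension taken to be $h=k$. First I would let $f_1, f_2, \ldots, f_k : V \to \mathbb R$ be an $\ell^2(V,w)$-orthonormal system of eigenfunctions of $\mathcal L_G$ with eigenvalues $\lambda_1 \leq \cdots \leq \lambda_k$, and set $F(v) = (f_1(v), f_2(v), \ldots, f_k(v)) \in \mathbb R^k$. Expanding the Rayleigh quotient coordinate by coordinate and using the variational characterization \eqref{eq:eigenvar} of the $\lambda_i$ gives $\mathcal R_G(F) = \frac{1}{k}\sum_{i=1}^k \lambda_i \leq \lambda_k$, so it suffices to control $\phi_G$ in terms of $\mathcal R_G(F)$.

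Next I would arrange the spreading hypothesis of Lemma~\ref{lem:mwcheegerapp}. By Lemma~\ref{lem:spread}, $F$ is $\left(\Delta, \frac{1}{k(1-\Delta^2)}\right)$-spreading for every $\Delta \in (0,1)$. Choosing $\Delta \asymp \sqrt{\delta}$ small enough that $(1-\Delta^2)^{-1} \leq 1 + \frac{\delta}{4}$ — which only requires $\Delta^2 \leq \frac{\delta}{4+\delta}$, so in particular $\Delta < 1$ for all $\delta \in (0,1)$ and $\Delta \asymp \sqrt{\delta}$ — makes $F$ into a $\left(\Delta, \frac{1}{k} + \frac{\delta}{4k}\right)$-spreading map, which is exactly the hypothesis needed.

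Finally, applying Lemma~\ref{lem:mwcheegerapp} to this $F$ with $h=k$ produces $r \geq \lceil (1-\delta)k\rceil$ disjoint sets $S_1, S_2, \ldots, S_r$ with
$$
\phi_G(S_i) \lesssim \frac{\sqrt{h}}{\delta \Delta}\sqrt{\mathcal R_G(F)} \lesssim \frac{\sqrt{k}}{\delta \sqrt{\delta}}\sqrt{\lambda_k} = \frac{\sqrt{k}}{\delta^{3/2}}\sqrt{\lambda_k}\,,
$$
which is the claimed bound \eqref{eq:improvedbound}. Non-emptiness is automatic, since in the proof of Lemma~\ref{lem:mwcheegerapp} each output set carries, in expectation over the Cheeger threshold, an $\asymp \mathcal E/k > 0$ fraction of the $\ell^2$-mass $\mathcal E = \sum_{v} w(v)\|F(v)\|^2$, so it is non-empty for a good choice of threshold.

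\textbf{Main obstacle.} At this level there is essentially no obstacle left: all of the substance lives in Lemma~\ref{lem:mwcheeger} (the multi-way Cheeger inequality for $\Delta$-bounded Lipschitz partitions, via Theorem~\ref{thm:RkLip}) and in Lemma~\ref{lem:mwcheegerapp} (the averaging step that converts one partition into many disjoint low-conductance sets while using the spreading property to cap the mass of each piece). The only points requiring a moment's care are the bookkeeping choice of $\Delta \asymp \sqrt{\delta}$ so that the spreading parameter lands precisely on $\frac{1}{k} + \frac{\delta}{4k}$, and noting that the factor $\sqrt{h}$ from Lemma~\ref{lem:mwcheegerapp} becomes $\sqrt{k}$ here because no dimension reduction is performed; obtaining the sharper $\sqrt{\log k}$ dependence of Theorem~\ref{thm:optfewsets} would instead first route $F$ through Lemma~\ref{lem:dimreduce} to reduce to $h = O(\delta^{-2}\log k)$.
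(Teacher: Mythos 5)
Your proof is correct and follows exactly the paper's own argument: take the $k$-dimensional spectral embedding, choose $\Delta \asymp \sqrt{\delta}$ so Lemma~\ref{lem:spread} gives $(\Delta, \frac{1}{k}+\frac{\delta}{4k})$-spreading, and apply Lemma~\ref{lem:mwcheegerapp} with $h=k$. The only difference is that you spell out the bookkeeping (the bound $\mathcal R_G(F)\leq\lambda_k$ and non-emptiness) that the paper leaves implicit.
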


\begin{proof}
Let $\Delta \asymp \sqrt{\delta}$ be such that $(1-\Delta^2)^{-1} \leq 1+\frac{\delta}{4}$.
If we take $F : V \to \mathbb R^k$ to be the embedding coming from the first $k$ eigenfunctions of $\mathcal L_G$, then
Lemma \ref{lem:spread} implies that $F$ is $(\Delta, \frac{1}{k} + \frac{\delta}{4k})$-spreading.
Now apply Lemma
\ref{lem:mwcheegerapp}.
\end{proof}

Observe that setting $\delta = \frac{1}{2k}$ in the preceding theorem yields Theorem \ref{thm:kway}.

\medskip

And now we can complete the proof of Theorem \ref{thm:optfewsets}.

\begin{proof}[Proof of Theorem \ref{thm:optfewsets}]
Let $F(v) = (f_1(v), f_2(v), \ldots, f_k(v))$.
Choose $\Delta \asymp \delta$ so that $(1-16\Delta^2)^{-1} (1+4\Delta) \leq 1+\frac{\delta}{4}$.
In this case, for some choice of
$$
h \lesssim \frac{1+\log(k)+\log\left(\tfrac{1}{\Delta}\right)}{\Delta^2} \lesssim \frac{O(\log k)}{\delta^2}\,,
$$
with probability at least $1/2$, $\Gamma_{k,h}$ satisfies the conclusions
of Lemma \ref{lem:dimreduce}.  Assume that $\Gamma : \mathbb R^k \to \mathbb R^h$
is some map satisfying these conclusions.

Then combining the conclusions of Lemma \ref{lem:dimreduce} with Lemma \ref{lem:spread}, we see that $F^* \defeq \Gamma$ is
$(\Delta, \frac{1}{k} + \frac{\delta}{4k})$-spreading, takes values in $\mathbb R^h$, and satisfies
$\mathcal R_G(F^*) \leq 8 \cdot \mathcal R_G(F)$.
Now applying Lemma \ref{lem:mwcheegerapp} yields the desired result.
\end{proof}

\subsection{Gaps in the spectrum}
\label{sec:gaps}

We now show that if there are significant gaps in the spectrum of $G$, one can
obtain a higher-order Cheeger inequality with no dependence on $k$.

\begin{theorem}
\label{thm:gaps}
There is a constant $c > 0$ such that for any weighted graph $G=(V,E,w)$ and $k \in \mathbb N$, the following holds.
Let $\delta \in (0,\frac13)$ be such that $\delta k$ is an integer.
If $\lambda_{(1+\delta)k} > c \frac{(\log k)^2}{\delta^9} \lambda_k$, then there are at least $r \geq (1-3\delta)k$
disjointly supported functions $\psi_1, \psi_2, \ldots, \psi_r : V \to \mathbb R$ such that
\begin{equation}
\mathcal R_G(\psi_i) \lesssim \frac{\lambda_k}{\delta^3}\,,
\end{equation}
where $\lambda_k$ is the $k$th smallest eigenvalue of $\mathcal L_G$.
\end{theorem}

\begin{proof}
Let $f_1, f_2, \ldots, f_k : V \to \mathbb R$ be an $\ell^2(V,w)$-orthonormal
system of eigenfunctions corresponding to the first $k$ eigenvalues of $\mathcal L_G$,
and define $F : V \to \mathbb R^k$ by $F(v)=(f_1(v), f_2(v), \ldots, f_k(v))$.
We may assume that $\delta \geq 1/k$.

Using Lemma \ref{lem:dimreduce} (as in the proof of Theorem \ref{thm:betterfewsets}), there is a map $\Lambda : V \to \mathbb R^{h}$
where $h = O(\frac{\log k}{\delta^2})$, and the following hold:
\begin{enumerate}
\item $\Lambda$ is $(\Delta,\eta)$-spreading for some $\Delta \asymp \delta$ and $\eta = \frac{1}{k} + \frac{\delta}{16k}$,
\item $\mathcal R_G(\Lambda) \leq 8\mathcal R_G(F) \leq 8\lambda_k$\,.
\end{enumerate}

Since the radial projection distance $d_{\Lambda}$ is Euclidean, we can use Theorem \ref{thm:rkpad} to achieve
a $(\Delta/4, \alpha, 1-\delta/16)$-padded random partition $\mathcal P$ of $(V,d_{\Lambda})$ with $\alpha \asymp \frac{h}{\delta} \asymp \frac{\log k}{\delta^3}$.
For a subset $S \subseteq V$, let $$\tilde S \defeq \{ v \in S : B_{d_{\Lambda}}(v,\Delta/(4\alpha)) \subseteq S \}\,.$$
Then by linearity of expectation applied to the random partition $\mathcal P$, there must exist a fixed partition $P$ of $V$
such that for every $S \in P$, we have $\diam(S,d_{\Lambda}) \leq \Delta/4$ and
\begin{equation}
\label{eq:largemass}
\sum_{S \in P} \mathcal M_{\Lambda}(\tilde S) \geq \left(1-\frac{\delta}{16}\right) \mathcal M_{\Lambda}(V)\,
\end{equation}
where we define $\mathcal M_{\Lambda}(S) \defeq \sum_{v \in S} w(v) \|\Lambda(v)\|^2$ for any $S \subseteq V$.

Order the sets of $P$ as $\tilde S_1, \tilde S_2, \ldots$ so that $\mathcal M_{\Lambda}(\tilde S_i) \leq \mathcal M_{\Lambda}(\tilde S_{i+1})$ for each $i$.
We consider two cases.

\medskip
\noindent
{\bf Case I:} $\mathcal M_{\Lambda}(\tilde S_{(1-2\delta) k}) > \eta \cdot \mathcal M_{\Lambda}(V)/2.$

\medskip

In this case it must be that for $1 \leq i,j \leq (1-2\delta)k$ and $i \neq j$, we have
$$
B_{d_{\Lambda}}(\tilde S_i, \Delta/4) \cap B_{d_{\Lambda}}(\tilde S_j, \Delta/4) =  \emptyset\,.
$$
Otherwise, one can put $S \defeq B_{d_{\Lambda}}(\tilde S_i, \Delta/4) \cup B_{d_{\Lambda}}(\tilde S_j, \Delta/4)$ so that $\diam(S,d_{\Lambda}) \leq \Delta$ but
$\mathcal M_{\Lambda}(S) > \eta \cdot \mathcal M_{\Lambda}(V)$, which contradicts that fact that $\Lambda$ is $(\Delta,\eta)$-spreading.

Now by applying Lemma \ref{lem:manybumps} to the $(\Delta/4)$-separated sets $\tilde S_1, \tilde S_2, \ldots, \tilde S_{(1-2\delta)k}$,
we obtain $r \geq (1-3\delta)k$ disjointly supported functions $\psi_1, \psi_2, \ldots, \psi_r : V \to \mathbb R$ such that for each $i$,
$$
\mathcal R_G(\psi_i) \lesssim \frac{1}{\eta \cdot \delta k \cdot \Delta^2} \mathcal R_G(\Lambda) \lesssim \frac{\lambda_k}{\delta^3}\,.
$$

\medskip
\noindent
{\bf Case II:} $\mathcal M_{\Lambda}(\tilde S_{(1-2\delta) k}) \leq \eta \cdot \mathcal M_{\Lambda}(V)/2.$

\medskip

Since $\Lambda$ is $(\Delta,\eta)$-spreading, for any $i \leq (1-2\delta)k$, we have
$$
\mathcal M_{\Lambda}(\tilde S_i) \leq \eta \cdot \mathcal M_{\Lambda}(V)\,.
$$
Thus we can take disjoint unions of the sets $\{\tilde S : S \in P\}$ to form at least $s$ disjoint sets
$T_1, T_2, \ldots, T_s$ with $s \geq \lceil (1+3\delta/2)k\rceil$ such that for each $i$,
$$
\mathcal M_{\Lambda}(T_i) \geq \frac{\mathcal M_{\Lambda}(V)}{8k}\,.
$$
This is because the first $r-1$ pieces will have total mass at most
$$
\sum_{i=1}^{r-1} \mathcal M_{\Lambda}(\tilde S_i) \leq (1-2\delta)k \cdot \eta \cdot \mathcal M_{\Lambda}(V) + \frac{7\delta k}{2} \max \left\{\frac{\eta}{2}, \frac{1}{2k}\right\} \mathcal M_{\Lambda}(V) \leq (1-\tfrac{\delta}{4})(1+\tfrac{\delta}{16}) \mathcal M_{\Lambda}(V)\,,
$$
leaving at least $\frac{\delta}{8} \mathcal M_{\Lambda}(V) \geq \frac{1}{8k} \mathcal M_{\Lambda}(V)$ left over from \eqref{eq:largemass}.

Now applying Lemma \ref{lem:manybumps} to the $\Delta/(4\alpha)$-separated sets $T_1, \ldots, T_s$,
we obtain $s' \geq (1+\delta)k$ disjointly supported functions
$\psi_1, \psi_2, \ldots, \psi_{s'} : V \to \mathbb R$ such that for each $i$,
$$
\mathcal R_G(\psi_i) \lesssim \frac{\alpha^2}{\delta \cdot \Delta^2} \mathcal R_G(\Lambda) \lesssim \frac{(\log k)^2}{\delta^9} \lambda_k\,.
$$
But now Lemma \ref{lem:testfun} implies that
$$
\lambda_{(1+\delta)k} \leq c' \frac{(\log k)^2}{\delta^9} \lambda_k\,,
$$
for some constant $c' > 0$, contradicting our initial assumption (for $c=c'$).
\end{proof}

Lemma \ref{lem:cheeger} immediately yields the following corollary.

\begin{corollary}
Under the assumptions of Theorem \ref{thm:gaps}, there are at least $r \geq (1-3\delta)k$ non-empty, disjoint sets $S_1, S_2, \ldots, S_r \subseteq V$
such that $\phi_G(S_i) \lesssim \sqrt{\lambda_k/\delta^3}$.
\end{corollary}

Let us conclude this section by describing the consequences of the above results for spectral clustering algorithms.
The proof of Theorem \ref{thm:gaps} aligns with the folklore belief that, in spectral clustering,
the number of clusters is best chosen based on a large gap in the spectrum of the underlying graph.
Additionally, the proof provides a justification for the use of the $k$-means heuristic.
Observe that in Case I (the only possible case under the assumptions of the theorem), the support
of each of the functions $\psi_i$ is a ball of radius at most $\Delta$ with respect to the metric
$d_{\Lambda}$.  In other words, the vertices are concentrated in $\asymp k$ balls of small radius
after the dimension reduction step.  It seems plausible that the $k$-means heuristic
could successfully locate a good partition of the vertices in such a scenario.

\subsection{Noisy hypercubes}
\label{sec:noisycube}

In the present section, we review examples for which Corollary \ref{cor:optfewsets} is tight.
For $k \in \mathbb N$ and $\e \in (0,1)$ let $H_{k,\e}=(V,E)$ be the ``noisy hypercube'' graph, where $V=\{0,1\}^k$, and for any $x,y\in V$ there
is an edge of weight $w(x,y)=\eps^{\|x-y\|_1}$.
We put $n = |V| = 2^k$.

\begin{theorem}
\label{thm:noisycube}
For any $1 \leq C < k$ and $k \in \mathbb N$, and $S\subseteq V$ with $|S| \leq C n/k$, we have
$$\phi_{H_{k,\e}}(S) \gtrsim \sqrt{\lambda_k \log{(k/C)}}\,,$$
where $\e = \frac{\log(2)}{\log (k/C)}$.
\end{theorem}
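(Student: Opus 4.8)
The plan is to recognize $H_{k,\e}$ as the $\rho$-noisy hypercube and combine an exact eigenvalue computation with the Bonami--Beckner hypercontractive inequality, so that the claimed bound reduces to the statement that small sets in the noisy hypercube have constant expansion.

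\medskip
\noindent\textbf{Step 1 (Spectrum).}
Set $\rho \defeq \frac{1-\e}{1+\e}$. Since $w(v) = \sum_{y} \e^{\|v-y\|_1} = (1+\e)^k$ for every vertex $v$, the operator $D^{-1/2} A D^{-1/2} = (1+\e)^{-k} A$ coincides with the noise operator $T_\rho$ on $\{0,1\}^k$ (sampling a neighbor flips each coordinate independently with probability $\frac{\e}{1+\e} = \frac{1-\rho}{2}$). The Fourier characters $\chi_T$, $T \subseteq [k]$, diagonalize $T_\rho$ with $T_\rho \chi_T = \rho^{|T|}\chi_T$, so $\mathcal L_{H_{k,\e}}$ has eigenvalues $1 - \rho^{j}$ with multiplicity $\binom{k}{j}$, for $j = 0,1,\dots,k$. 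Hence $\lambda_1 = 0$ and $\lambda_2 = \cdots = \lambda_{k+1} = 1-\rho$, so for $k \ge 2$,
$$\lambda_k = 1 - \rho = \frac{2\e}{1+\e} \le 2\e = \frac{2\log 2}{\log(k/C)}\,.$$
In particular $\lambda_k \log(k/C) \le 2\log 2$, so the right-hand side of the claimed inequality is $O(1)$, and it suffices to prove $\phi_{H_{k,\e}}(S) \ge \tfrac12$ whenever $|S| \le Cn/k$.

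\medskip
\noindent\textbf{Step 2 (Small-set expansion via hypercontractivity).}
Let $\mu(S) = |S|/n$ and let $\langle\cdot,\cdot\rangle$ denote the inner product with respect to the uniform measure on $\{0,1\}^k$. Using again that $w(v) = (1+\e)^k$ for all $v$, a direct computation unwinding the definition of $\phi_{H_{k,\e}}$ gives
$$\phi_{H_{k,\e}}(S) = 1 - \frac{\langle \1_S, T_\rho \1_S\rangle}{\mu(S)}\,,$$
since $\langle \1_S, T_\rho \1_S\rangle$ is exactly the probability that a uniformly random vertex and a random neighbor of it both lie in $S$. Writing $T_\rho = T_{\sqrt\rho} \circ T_{\sqrt\rho}$ and using self-adjointness, the Bonami--Beckner inequality $\|T_{\sqrt\rho} g\|_2 \le \|g\|_{1+\rho}$ yields
$$\langle \1_S, T_\rho \1_S\rangle = \|T_{\sqrt\rho}\1_S\|_2^2 \le \|\1_S\|_{1+\rho}^2 = \mu(S)^{2/(1+\rho)}\,.$$
Therefore, since $\frac{1-\rho}{1+\rho} = \e$,
$$1 - \phi_{H_{k,\e}}(S) \le \mu(S)^{\frac{2}{1+\rho} - 1} = \mu(S)^{\e} \le (C/k)^{\e} = e^{-\e \log(k/C)} = e^{-\log 2} = \tfrac12\,,$$
so $\phi_{H_{k,\e}}(S) \ge \tfrac12 \gtrsim \sqrt{\lambda_k \log(k/C)}$, as desired.

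\medskip
The only nontrivial ingredient is the hypercontractive inequality; everything else is a Fourier-analytic bookkeeping exercise. The main place to be careful is the identity in Step 2: one must track the vertex weights $w(v) = (1+\e)^k$ that cancel between $w(E(S,\overline S))$ and $w(S)$, and verify the reduction of $\langle\1_S,T_\rho\1_S\rangle$ to the two-point probability. There are also harmless edge conditions — we need $k\ge 2$ and $\e\in(0,1)$, the latter amounting to $k/C>2$; in the degenerate regime $k\le 2C$ the claimed right-hand side is already $O(1)$ and the inequality carries no content beyond constants.
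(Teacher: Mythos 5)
Your proof is correct and follows essentially the same route as the paper: bound $\lambda_k \lesssim \e$ via the low-degree Fourier characters, then invoke Bonami--Beckner hypercontractivity to show that any set of size at most $Cn/k$ has expansion at least $\tfrac12$. The only cosmetic differences are that you compute the exact spectrum (identifying $(1+\e)^{-k}A$ with the noise operator $T_\rho$, $\rho = \tfrac{1-\e}{1+\e}$) rather than merely exhibiting $k$ coordinate functions with small Rayleigh quotient, and you phrase the small-set-expansion step in the $T_\rho$ notation; both are equivalent to the paper's argument.
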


\begin{proof}
Let $H = H_{k,\e}$.
First, the weighted degree of every vertex is
$$
w(x) = \sum_{y \in V} \eps^{\|x-y\|_1} = (1+\eps)^k\,.
$$
Therefore,
if we define $F_i : V \to \mathbb R$ by $F_i(x) = (-1)^{x_i}$, then
$$
\mathcal R_{H_{k,\e}}(F_i) =
\frac{ \sum_{\{x,y\}} w(x,y) |F_i(x) - F_i(y)|^2 }{\sum_{x} w(x) F_i(x)^2} = \frac{2\eps n(1+\eps)^{k-1}}{n(1+\eps)^k} \leq 2\eps\,.
$$
Thus $\lambda_k(H) \leq 2\e$.
We will now show that for $|S| \leq Cn/k$, one has $\phi_H(S) \geq \frac12$, completing the proof of the theorem.

\medskip

To bound $\phi_H(\cdot)$, we need to recall some Fourier analysis.
 For $f,g: \{0,1\}^k \rightarrow \mathbb{R}$ define the  inner product:
$$ \langle f,g\rangle_{L^2(V)} \defeq \frac{1}{n} \sum_{x\in \{0,1\}^k} f(x)g(x). $$
Given $S\subseteq [k]$, the Walsh function $W_S: \{0,1\}^k \rightarrow \mathbb{R}$ is defined by $W_S(x) = (-1)^{\sum_{i\in S} x_i}$.
The Walsh functions form an orthonormal basis with respect to the above inner product. Therefore, any function $f:\{0,1\}^k \rightarrow \mathbb{R}$ has a unique representation as
$f=\sum_{S \subseteq [n]} \widehat{f}(S) W_S,$
where $\widehat{f}(S) \defeq \langle f,W_S\rangle_{L^2(V)}$.

For $\eta \in [0,1]$, the Bonami-Beckner operator $T_\eta$ is defined as
$$ T_\eta f := \sum_{S\subseteq [n]} \eta^{|S|} \widehat{f}(S) W_S. $$
The Bonami-Beckner inequality \cite{Bonami70,Beckner75} states that
\begin{equation}\label{eq:BB}
\sum_{S\subseteq [n]} \eta^{|S|} \widehat{f}(S)^2 = \|T_{\sqrt\eta} f\|^2_2 \leq \|f\|^2_{1+\eta} = \left\{\frac{1}{n} \sum_{x\in \{0,1\}^k} f(x)^{1+\eta}\right\}^{\frac{2}{1+\eta}}.
\end{equation}

Let $A$ be the normalized adjacency matrix of $H$,
i.e. $A_{xy} = \frac{\e^{|x \oplus y|}}{(1+\e)^k}\,.$
It follows  from an elementary calculation that $W_S$ is an eigenvector of $A$ with eigenvalue $(\frac{1-\eps}{1+\eps})^{|S|}$, i.e.
$$ AW_S = \left(\frac{1-\eps}{1+\eps}\right)^{|S|}W_S.$$

\def\one{{\bf 1}}
For $S\subseteq [n]$, let $\one_S$ be the indicator function of $S$. Therefore,
\begin{eqnarray*}
\langle \1_S, A \1_S\rangle_{L^2(V)}
= \sum_{T\subseteq [n]} \widehat{\one}_S(T)^2 \left(\frac{1-\eps}{1+\eps}\right)^{|T|} \leq \| \one_S \|^2_{\frac{2}{1+\eps}} = \left(\frac{|S|}{n}\right)^{1+\eps},
\end{eqnarray*}
where the one last inequality follows from \eqref{eq:BB}.

Now, observe that for any $S \subseteq V$, we have $$w(E(S,\overline{S})) = w(S) - w(E(S,S)) = w(S) - (1+\e)^k n \langle \1_S, A \1_S\rangle_{L^2(V)}\,$$
where we have written $E(S,S)$ for edges with both endpoints in $S$.

Hence, for any subset $S \subseteq V$ of size $|S|\leq Cn/k$, we have
$$ \phi_H(S) = \frac{w(E(S,\overline{S}))}{w(S)} =
\frac{|S| -  n \langle \one_S, A \one_S\rangle_{L^2(V)}}{|S|} \geq 1-\left(\frac{|S|}{n}\right)^\eps \geq 1-(k/C)^{-\eps} \geq \frac12\, $$
where the last inequality follows by the choice of $\eps = \log(2)/\log{(k/C)}$.
\end{proof}

\begin{remark}
The preceding theorem shows that even if we only want to find a set $S$ of size $n/\sqrt{k}$, then for values of
$k \leq O(\log n)$, we can still only achieve a bound of the form $\phi_H(S) \lesssim \sqrt{\lambda_k \log k}$.
The state of affairs for $k \gg \log n$ is a fascinating open question.
\end{remark}

\section{Conclusion}

\subsection{Description of our algorithm}
\label{sec:algorithm}

In Section \ref{sec:genalg}, we gave a generic outline of our spectral partitioning algorithm.
We remark that our instantiations of this algorithm are simple to describe.
As an example,
suppose we are given a weighted graph $G=(V,E,w)$
Let ${\cal L}_G = I-D^{-1/2}AD^{-1/2}$ be the normalized Laplacian matrix of $G$
where $I$ is the identity matrix, $A$ is the adjacency matrix and $D$ is the diagonal matrix of vertex degrees.
We want to find $k$ disjoint sets,
each of expansion $O(\sqrt{\lambda_{2k} \log k})$ where $\lambda_{2k}$ is the $2k^\textrm{th}$ smallest eigenvalue of ${\cal L}_G$ (recall Theorem
\ref{thm:fewer}).  We specify a complete randomized algorithm.

\medskip
\begin{enumerate}[i)]
\item (Spectral embedding) We start by computing   $2k$ orthonormal vectors $g_1,\ldots,g_{2k}$ (think of them as functions, $g_i:V\to\R$)
such that
$$ \frac{\sum_{i=1}^{2k} \sum_{u\sim v} w(u,v) |g_i(u)-g_i(v)|^2}{\sum_{i=1}^{2k} \sum_{v\in V} w(v) g_i(v)^2} \leq O(\lambda_{2k})$$
Let $f_i=D^{-1/2} g_i$, i.e., for each $v\in V$, $f_i(v)=g_i(v)/\sqrt{w(v)}$.
Define the spectral embedding $F : V \to \mathbb R^{2k}$,
 by $F(v) = (f_1(v), f_2(v), \ldots, f_{2k}(v))$. 
\item (Random Projection) For some $h = O(\log k)$,
we perform random projection into $\mathbb R^h$. Let $\Gamma_{2k,h} : \mathbb R^{2k} \to \mathbb R^h$
be the random linear map given by
$$
\Gamma_{2k,h}(x) = \frac{1}{\sqrt{h}} \left(\langle g_1, x\rangle, \ldots, \langle g_h, x \rangle\right),
$$
where $\{g_1, \ldots, g_h\}$ are i.i.d. standard Gaussians.
Define $F^* \defeq \Gamma_{2k,h} \circ F : V \to \mathbb R^h$ so that
for each $v\in V$, $F^*(v)=\frac{1}{\sqrt{h}} (\langle g_1, F(v)\rangle, \ldots, \langle g_h,F(v)\rangle ).$

\item (Random partitioning) For some $R = \Theta(1)$, we perform the random space
partitioning algorithm from \cite{CCGG98} as follows:
Let $\mathcal B$ denotes the closed Euclidean unit ball in $\mathbb R^h$.
Consider $V \subseteq \mathcal B$
by identifying each vertex with its image under the map $v \mapsto F^*(v)/\|F^*(v)\|$.
Choose i.i.d. sequence of points $\{x_1, x_2, \ldots\}$ in $\mathcal B$ (chosen according to the Lebesgue measure) and
 form a partition of $V$ into the sets
$$
V = \bigcup_{i=1}^{\infty} \left[\vphantom{\bigoplus} V \cap B(x_i, R) \setminus \left(B(x_1, R) \cup \cdots \cup B(x_{i-1}, R)\right)\right]
$$
Here, $B(x,R)$ represents the closed Euclidean ball of radius $R$ about $x$, and it is easy to see that
this induces a partition of $V$ in a finite number of steps with probability one.
In other words, we assign each vertex $v\in V$ to the first point $x_i$ such that
$$ \norm{ x_i - \frac{F^*(v)}{\norm{F^*(v)}}} \leq R.$$
Let $V = S_1 \cup S_2 \cup \cdots \cup S_m$ be this partition.

\item (Merging) For a subset $S \subseteq V$, let $\mathcal M(S) = \sum_{v \in S} w(v) \|F^*(v)\|^2$.
We sort the partition $\{S_1, S_2, \ldots, S_m\}$ in decreasing order
according to $\mathcal M(S_i)$.
Let $k' = \lceil \frac{3}{2} k \rceil$.
Then for each $i = k'+1, k'+2, \ldots, m$,
we iteratively
set $S_{\ell} \defeq S_{\ell} \cup S_i$ where $$\ell = \argmin \{ \mathcal M(S_j) : j \leq k \}\,.$$
(Intuitively, we form $k'$ sets from our total of $m \geq k'$ sets by
balancing the $\mathcal M(\cdot)$-value among them.)
At the end, we are left with a partition $V = S_1 \cup S_2 \cup \cdots \cup S_{k'}$ of $V$ into $k' \geq 3k/2$ sets.

\item (Cheeger Sweep) To complete the algorithm, for each $i=1,2,\ldots,k'$, we choose a value $\tau$ such that
$$
\hat S_i = \{ v \in S_i : \|F^*(v)\|^2 \geq \tau \}
$$
has the least expansion.
We then output $k$ of the sets $\hat S_1, \hat S_2, \ldots, \hat S_{k'}$
that have the smallest expansion.
\end{enumerate}

We emphasize that one can run the above algorithm  using any set of orthonormal
vectors with small Rayleigh quotient.  One can employ the recent developments on fast Laplacian solvers
to find such vectors in near-linear time \cite{ST04,KMP11,KOSZ13,Vishnoi13}.
Given orthonormal vectors $g_1,\ldots,g_{2k}$, the above algorithms runs in time $O(n\cdot \poly(k))$.
In particular every step except random partitioning runs in nearly linear time, and the random
partitioning step runs in time $O(n\cdot 2^h)$.

\subsection{Future directions}

The preceding algorithm suggests some natural questions.  First, does dimension reduction
help to improve the quality of clusterings in practice?  For instance, if one runs the $k$-means
algorithm (as in \cite{NJW02}) on the randomly projected points, does it yield better results?
Another interesting question is whether, at least in certain circumstances,
the quality of the $k$-means clustering can be rigorously analyzed when
used in place of our random geometric partitioning.

\medskip

It would be interesting to find the right asymptotic dependence on $k$ in  Theorem \ref{thm:kway}.
Recall that in Theorems \ref{thm:fewer} and \ref{thm:noisycube},
we showed that if one is interested in finding, say, $k/2$ disjoint non-expanding sets,
then the right dependence on $k$ is $\Theta(\sqrt{\log k})$.

One might hope that it is possible to achieve $\rho_G(k) \leq (\log(k))^{O(1)} \sqrt{\lambda_k}$.
Such a bound is impossible if we instead try to find a $k$-{\em partitioning} of our graph. There are simple family of graphs where the sparsity of the
best $k$-partitioning has
a polynomial dependence on $k$ \cite{LRTV12}.

\remove{
\begin{algorithm}[htb]
\caption{Finds $\lceil(1-\delta)k\rceil$ disjoint sets each having expansion $O(\frac{1}{\delta^3}\sqrt{\lambda_k\log{k}})$}
\begin{algorithmic}
\INPUT $f_1, f_2,\ldots,f_k: V\rightarrow \mathbb{R}$ forms an $\ell^2(V,w)$-orthonormal system of eigenfunctions corresponding to the first $k$ eigenvalues of ${\cal L}_G$, and
$r=\lceil(1-\delta)k\rceil$.
\OUTPUT Disjoint sets  $S_1,\ldots,S_{r}\subset V$ such that  $\phi_G(S_i)\lesssim \frac{1}{\delta^3}\sqrt{\lambda_k\log{k}}$
\STATE Define  $F:V\rightarrow \mathbb{R}^k$ by $F(v) := \{f_1(v),f_2(v),\ldots,f_k(v)\}$.
\STATE Choose $\Delta \asymp \delta$ such that $(1+4/\Delta)/(1-16\Delta^2) \leq 1+\delta/4$,  and $h \lesssim \frac{1+\log k+ \log(1/\Delta)}{\Delta^2}$ such that
$2e^{-h\Delta^2/3072}\leq \Delta^2 k^{-3} /10^5$.
\STATE Choose a random gaussian projection $\Gamma_{k,h}: \mathbb{R}^k \rightarrow \mathbb{R}^h$ defined by
$$\Gamma_{k,h}(x) = \frac{1}{\sqrt{h}} (\langle g_1,x\rangle ,\ldots, \langle g_h,x\rangle).$$
\STATE Let $F^* = \Gamma_{h,k}(F)$.
\STATE Choose a $(\Delta, 2\sqrt{h})$ Lipshitz random partition ${\cal P}(.)$ w.r.t. $d_{F^*}$ as follows: Let $R=\Delta/2$,  $U=V$, and
$$X:=\left\{x: \left\|x - \frac{F^*(v)}{\|F^*(v)\|}\right\| \leq R, v\in V\right\}.$$
\WHILE {$U\neq \emptyset$}
\STATE Choose $x\in X$ uniformly at random.
\STATE Let $S:=\{v\in U:\left\|x - \frac{F^*(v)}{\|F^*(v)\|}\right\| \leq R\}$.
\STATE Map every vertex $v\in S$ to $S$, i.e. ${\cal P}(v) = S$, and remove it
from $U$.
\ENDWHILE
\STATE Let $\lceil r'=(1-\delta/2)k\rceil$, and for $S\subseteq V$, let ${\cal M}(S) =\sum_{v\in S} w(v) \|F^*(v)\|^2$.  Let $S_1,\ldots,S_m$ be the partition corresponding to ${\cal
P}(.)$,
sorted decreasingly according to  ${\cal M}(S_i)$.
\FOR {$i=r'+1 \to m$}
\STATE include $S_i$ in $\argmin_{S_j: j\leq r'} {\cal M}(S_j)$.
\ENDFOR
\STATE For each $1\leq i\leq r'$, choose $\tau$ such that $\hat{S}_i=\{v\in S_i: \|F^*(v)\|^2 \geq \tau\}$ has the least expansion.
\RETURN $r$ of the sets $\hat{S}_1,\ldots\hat{S}_{r'}$ with the least expansion.
\end{algorithmic}
\label{alg:cheeger}
\end{algorithm}
}

\bibliographystyle{alpha}
\bibliography{references,trees,hs}

\end{document}